\numberwithin{equation}{section}
\newtheorem{prop}{Proposition}[section]
\newtheorem{thm}{Theorem}[section]
\newtheorem{lem}{Lemma}[section]
\theoremstyle{definition}
\newtheorem{rem}{Remark}[section]
\newtheorem{df}{Definition}[section]
\newtheorem{ex}{Example}[section]
\newcommand{\ps}{\widehat{P}^{\star}}
\newcommand{\prs}{\widehat{\pr}^{\star}}
\newcommand{\ers}{\widehat{\ew}^{\star}}
\newcommand{\cs}{\widehat{\Phi}^{\star}}
\newcommand{\supp}{\operatorname{supp}}
\newcommand{\p}{J}
\newcommand{\wxi}{\widetilde{\xi}}
\newcommand{\wPsi}{\widetilde{\Psi}}
\newcommand{\otau}{\widetilde{\tau}}
\newcommand{\n}{\mathbb{N}}
\newcommand {\<}{\left\langle}  
\renewcommand {\>}{\right\rangle}  
\newcommand {\norma}[1]{\left\|#1\right\|}
\newcommand{\pr}{\mathbb{P}}
\newcommand{\ew}{\mathbb{E}}
\newcommand{\dd}{\mathbf{d}}
\newcommand{\Lip}{\operatorname{Lip}}
\title{Exponential ergodicity in the bounded-Lipschitz distance for a subclass of piecewise-deterministic Markov processes with random switching between flows}
\newcommand\CoAuthorMark{\footnotemark[\arabic{footnote}]}
\author[1]{Dawid Czapla\protect\CoAuthorMark
  \thanks{Corresponding author;\; e-mail address: \texttt{dawid.czapla@us.edu.pl}}}
\author[1]{Katarzyna Horbacz} 
\author[1,2]{Hanna Wojew\'odka-\'Sci\k{a}\.zko}
\affil[1]{\small\textit{Institute of Mathematics, University of Silesia in Katowice, Bankowa 14, 40-007 Katowice, Poland}} 
\affil[2]{\small\textit{Institute of Theoretical and Applied Informatics, Polish Academy of Sciences, Ba\l tycka 5, \hbox{44-100 Gliwice, Poland}}}
\date{}
\begin{document}
\maketitle
\vspace*{-1.3cm}
\begin{abstract}
In this paper, we study a subclass of piecewise-deterministic Markov processes with a Polish state space, \hbox{involving} deterministic motion punctuated by random jumps that occur at exponentially distributed time intervals. Over each of these intervals, the process follows a flow, selected randomly among a finite set of all possible ones. Our main goal is to provide a set of verifiable conditions guaranteeing the exponential ergodicity for such processes (in terms of the bounded Lipschitz distance), which would refer only to properties of the flows and the transition law of the Markov chain given by the post-jump locations. Moreover, we establish a simple criterion on the exponential \hbox{ergodicity} for a particular instance of these processes, applicable to certain biological models, where the jumps result from the action of an iterated function system with place-dependent probabilities.
\end{abstract}
{\small
\noindent
\textbf {MSC 2010:} Primary: 60J25, 60J05; Secondary: 37A30, 37A25 \\
\textbf{Keywords:} Piecewise-deterministic Markov process; Switching semiflows; Exponential ergodicity; Forter--Mourier distance; Coupling technique; Gene expression; Polish space. \\
}

\section{Introduction}
Piecewise-deterministic Markov processes (PDMPs), first introduced by Davis \cite{b:davis} in 1984 (see~also~\cite{b:Davis_book1,b:Davis_book2}), constitute a general class of non-diffusive Markov processes, for which randomness stems only from the jump mechanism, including the jumping times, the post-jump locations and other changes occurring at the moments of jumps. This huge family of processes is extensively used for modelling purposes in many applied subjects, like biology \cite{b:buckwar,b:des, b:crudu, b:tyran, b:rudnicki}, storage modelling \cite{b:boxma} or internet traffic \cite{b:graham}.

In this paper, we are concerned with the PDMPs that evolve on a Polish space  through jumps arriving according to a Poisson process. This means that the span of time between consecutive jumps is exponentially distributed with a constant rate~$\lambda$. Between any two adjacent jumps, the dynamics of these processes is driven by one of the semiflows, \hbox{randomly} selected from a finite set \hbox{$\{S_i:\, i\in I\}$} of possible ones, according to a given stochastic \hbox{matrix}~$[\pi_{ij}]_{i,j\in I}$. The state right after a jump (usually called the post-jump location) depends randomly on the one immediately preceding this jump, and its probability distribution is governed by a~given Markov transition function (a stochastic kernel) $(y,B)\mapsto J(y,B)$.

More specifically, given an arbitrary Polish metric space $Y$, we shall investigate a stochastic process \hbox{$\Psi:=\{(Y(t),\xi(t))\}_{t\geq 0}$} with values in $X:=Y\times I$, whose motion can be described as follows. Starting from some initial value $(y_0,i_0)$, the process evolves in a deterministic way, following  $t\mapsto S_{i_0}(t,y_0)$ until the first jump time, say $\tau_1=t_1$. At this moment the trajectory jumps to another point of $Y$, say $y_1$, so that the probability it will fall into a Borel set $B\subset Y$ is $J(S_i(t_1,y_0),B)$. At the same time the index of the "active" semiflow is randomly switched from $i_0$ to another (or the same) one $i_1$ with probability $\pi_{i_0i_1}$. Then the motion restarts from the new state $(i_1,y_1)$ and proceeds as before. Formally, the process $\Psi$ can be therefore defined by setting
$$
Y(t):=S_{\xi_n}(t-\tau_n,Y_n)\quad\text{and}\quad \xi(t):=\xi_n\quad\text{for}\quad t\in [\tau_n,\tau_{n+1}),\;\;n\in\n_0,
$$
where $\bar{\Phi}:=\{(Y_n,\xi_n,\tau_n)\}_{n\in\n\cup\{0\}}$ is a time-homogeneous Markov chain with state space \hbox{$X\times [0,\infty)$} and transition law satisfying
$$
\pr(\bar{\Phi}_{n+1}\in B\times \Xi\times T|\,\bar{\Phi}_n=(y,i,s))=\sum_{j\in \Xi}\pi_{ij} \int_{T\cap\, [s,\infty)} \lambda e^{-\lambda (t-s)} \p(S_i(t,y),B)\,dt
$$
for any $n\in\n\cup\{0\}$, $y\in Y, i\in I,s\geq 0$ and Borel sets $B\subset Y$, $\Xi\subset I$, $T\subset[0,\infty)$. Obviously, all the randomness of the PDMP $\Psi$ is contained in the chain $\bar{\Phi}$. What is more, the sequence $\Phi:=\{(Y_n,\xi_n)\}_{n}$ of the post-jump locations itself is an $X$-valued Markov chain (with respect to its natural filtration). Clearly, on the family of rectangles $B\times \Xi$ (where $B\subset Y$ is a Borel set, and $\Xi\subset I$), the transition law of this chain takes the form
$$P((y,i),B\times \Xi):=\pr(\Phi_{n+1}\in B\times \Xi\,|\,\Phi_n=(y,i))=\sum_{j\in \Xi} \pi_{ij}\int_0^{\infty} \lambda e^{-\lambda t}\p(S_i(t,y),B)\,dt.$$

The subclass of the PDMPs considered here somewhat resembles those investigated in \cite{b:benaim4, b:benaim2, b:benaim1, b:benaim3, b:costa, b:costa2}. All these papers, however, focuse on processes evolving on finite-dimensional (and thus locally compact) spaces. While proving the existence of invariant distributions and ergodicity (usually in the total variation norm) in such a setup, one can use various adaptations of conventional methods of Meyn and Tweedie \cite{b:meyn2, b:meyn1}, based mainly on the Harris recurrence (assured e.g. by H\"ormander-type bracket conditions, just as in \cite{b:benaim1}) or some criteria referring to the so-called drift towards a petite set. These techniques, however, are mostly valid only for $\psi$-irreducible processes, which is, obviously, not the case in our framework. On the other hand, \cite{b:cloez_hairer}, for instance, deals with a large class of regime switching Markov processes (a much more general family than that of PDMPs), which take values in a Polish space. Nevertheless, the criteria on the exponential ergodicity (in the Wasserstein distance) provided in that work are based on fairly general assumptions, such as the ``exponential contractivity'' of the given Markov semigroups or a Lyapunov-Foster type condition in the continuous-time context, which might be difficult to verify in practice (at least in a direct~way).

The main goal of this paper is to provide relatively easy to check conditions on the kernel~$J$ and the semiflows $S_i$ which would guarantee that both the transition operator of the chain~$\Phi$ and the transition semigroup of the process $\Psi$ are exponentially ergodic in the bounded Lipschitz distance (equivalent to the one induced by the Dudley norm \cite{b:dudley}). Such a metric, also known as the Fortet--Mourier distance (see e.g. \cite{b:las}), is defined on the cone of non-negative finite Borel measures on $X$, and induces the topology of weak convergence of such measures (\hbox{\cite{b:bogachev}}). Roughly speaking, the aforementioned form of ergodicity means that the process under consideration admits a unique stationary (invariant) distribution, to which its distribution converges at an exponential rate in the Fortet--Mourier distance, independently of the initial state. The rigorous meaning of this term is given in Definitions \ref{def:erg_p}~and~\ref{def:erg_pt}.  
The general strategy of our approach is as follows:
\begin{enumerate}[label=(\Roman*)]
\item\label{step:i} We begin with showing that, whenever $J$ enjoys some strengthened form of the Feller property, there exists a one-to-one correspondence between the set of invariant distributions of the process~$\Psi$ and those of the associated chain $\Phi$ (Theorem~\ref{thm:cor}). 

\item\label{step:ii} Next, we note that the existence of an appropriate coupling $(\Phi^{(1)},\Phi^{(2)})$ between two copies of the chain $\Phi$, such that the mean distance between them decreases geometrically with time, in conjunction with a Foster--Lyapunov condition (\hbox{\cite[Definition 6.23]{b:douc}}) and the Feller property imposed on $P$, ensures the exponential ergodicity of $\Phi$\linebreak (Lemma~\ref{lem:a}). Obviously, the latter two assumptions can be equivalently formulated with respect to $J$.

\item\label{step:iii} The essential step in our analysis is proving that, for a given coupling $(\Phi^{(1)},\Phi^{(2)})$ of the chain $\Phi$ enjoying the property indicated in \ref{step:ii}, the corresponding coupling for the process~$\Psi$ has an analogous property, provided that the semiflows~$S_i$ fulfil a certain Lipschitz-type condition (Lemma \ref{lem:b}). The key idea here is partially inspired by the techniques used in the proof of \cite[Theorem 1.4]{b:cloez_hairer}.

\item\label{step:iv} From the results discussed in steps \ref{step:i} and \ref{step:iii} we can conclude that, upon making suitable assumptions on the semiflows~$S_i$ and the kernel $J$, the existence of an appropriate coupling of $\Phi$ implies the exponential ergodicity of the process~$\Psi$ (Theorem~\ref{thm:main1}).

\item\label{step:v} Finally, we introduce some additional hypotheses which, together with the previous ones, ensure that the coupling mentioned in~\ref{step:iii} exists. This leads us to the main result of the paper, stated as Theorem \ref{thm:main2}. More precisely, at this stage we require the existence of a substochastic kernel $Q_J$ on $Y^2$ with certain specific properties (in the spirit of \cite{b:czapla_clt, b:kapica}), such that
$$Q_J((y_1,y_2),\cdot\times Y)\leq J(y_1,\cdot) \quad\text{and}\quad Q_J((y_1,y_2), Y\times \cdot)\leq J(y_2,\cdot),$$
which further enables us to construct a substochastic kernel $Q_P$ on $X^2$, having the analogous properties with respect to $P$ (Lemma \ref{lem:help}). The transition function of the desired coupling can be then defined as the sum of $Q_P$ and a suitable complementary kernel (Proposition \ref{lem:coup}). Such a construction is inspired by the ideas of Hairer \cite{b:hairer}, regarding the so-called asymptotic coupling technique (also used e.g. in \cite{b:sleczka,b:wojewodka}).
\end{enumerate}

What is especially noteworthy here is the fact that this approach also elucidates the way in which the exponential ergodicity of the PDMP $\Psi$ is inherited from the same property for the associated chain $\Phi$. This is visible in steps \ref{step:i} and \ref{step:iii}.


The obtained general result (i.e.\,Theorem \ref{thm:main2}) is further applied to derive a simple \hbox{criterion} on the exponential ergodicity (in the Fortet--Mourier distance) in the case where the jump kernel $J$ is a transition law of a random iterated function system (Proposition~\ref{prop:ifs}). This is done by taking advantage of the fact that the kernel $Q_J$, playing a key role in step~\ref{step:v}, can be defined explicitly in such a model. More specifically, we discuss the case in which $J$ is given by
$$
\p(y,B)=\int_{w_{\theta}^{-1}(B)}p_{\theta}(y)\,\vartheta(d\theta)\quad \text{for each}\;\;y\in Y\;\;\text{and any Borel set}\;\;B\subset Y,
$$
where $\{w_{\theta}:\,\theta\in\Theta\}$ is an arbitrary family of continuous transformations from $Y$ to itself, indexed by the elements of a measure space $(\Theta,\vartheta)$, and  $\{p_{\theta}:\,\theta\in \Theta\}$ is an associated set of place-dependent probabilities. In this setting, the model under consideration may serve as a~framework for analysing the dynamics of gene expression in prokaryotes (see e.g. \cite{b:tyran2, b:czapla_erg, b:tyran}). More precisely, $Y(t)$ then describes the concentration of a protein at time $t$, during its degradation process. This process is interrupted by the so-called transcriptional bursts, followed by variable periods of inactivity, with exponentially distributed duration (expressed by the increments of~$\tau_n$). The bursts can be represented by the action of randomly selected transformations of the form $w_{\theta}(y)=y+\theta$, so that the process changes from $Y(\tau_k-)$ to $Y(\tau_k)=Y(\tau_k-)+\theta_k$ with some random variable $\theta_k$, representing the size of the $k$-th burst. Clearly, in this context, the semiflows $S_i$ determine the dynamics of the degradation process between consecutive bursts, which depends on the current amount of the gene product. Such a dynamics might be also perturbed by occasional fluctuations in the environment, caused by the bursts, which is modelled by the random switching between flows.

The discrete-time dynamical system $\Phi$ with the above-specified shape of the jump \hbox{kernel}~$J$, even in a more general setting, wherein the probabilities $\pi_{ij}$ depend on the state, has been more widely examined (in terms of ergodicity and classical limit theorems) in our previous articles \cite{b:czapla_erg, b:czapla_lil, b:czapla_clt, b:czapla_erg2}. For instance, in \cite{b:czapla_erg}, the exponential ergodicity of $\Phi$ has been used to prove the strong law of large numbers for the chain $\{f(\Phi_n)\}_{n}$ (with a Lipschitz continuous function $f:X\to\mathbb{R}$), which, in turn, has enabled us to derive the analogous law for the process $\{f(\Psi(t))\}_{t\geq 0}$ (without using the ergodicity of $\Psi$). The result provided in the present paper should prove to be useful in establishing also the central limit theorem for this process, which would be rather difficult to achieve based only on the properties of $\Phi$.

The organization of the paper is as follows. In Section \ref{sec:2}, we introduce notation and some basic concepts regarding Markov semigroups acting on measures, including the \hbox{employed} \hbox{definition} of ergodicity. Section \ref{sec:3} provides a detailed description of the subclass of the PDMPs under study. Section \ref{sec:4} is devoted to establishing a one-to-one correspondence between invariant distributions of the processes $\Psi$ and $\Phi$, that is, the realization of step~\ref{step:i}. The essential part of our analysis, referring to the coupling argument, which has been described within steps \ref{step:ii}-\ref{step:iv}, is contained in Section \ref{sec:coup}. Step~\ref{step:v}, including the construction of a~suitable coupling for $\Phi$, is included in Section \ref{sec:main}. Finally, also in this part of the paper, we state the main result and discuss the special case of the model, for which the jumps are determined by a random iterated function system.



\section{Preliminaries}\label{sec:2}
Consider a complete separable metric space $(E,\rho)$, endowed with its Borel $\sigma$-field $\mathcal{B}(E)$. By~$B_E(x,r)$ we will denote the open ball in $E$ centered at $x$ of radius $r>0$. The symbol $\mathbbm{1}_A$ will be used to denote the indicator function of a subset $A$ of $E$ (or any other space, which should be clear from the context). Additionaly, we set $\mathbb{N}_0:=\mathbb{N}\cup\{0\}$ and $\mathbb{R}_+:=[0,\infty)$.

Let $B_b(E)$ stand for the Banach space of all real-valued, Borel measurable \hbox{functions} on~$E$, equipped with the supremum norm  $\norma{f}_{\infty}:=\sup_{x\in E}|f(x)|$. By $C_b(E)$ we shall denote the subspace of $B_b(E)$ consisting of all continuous functions. In addition to this, we also define the set $\Lip_{b,1}(E)$ as follows:
$$\Lip_{b,1}(E):=\left\{f\in C_b(E):\, 0\leq f \leq 1,\;\;\sup_{x\neq y}\frac{|f(x)-f(y)|}{\rho(x,y)}\leq 1\right\}.$$

Moreover, we will write $\mathcal{M}(E)$ and $\mathcal{M}_{prob}(E)$ to denote the cone of all finite non-negative, Borel measures on $E$, and its subset consisting of all probability measures, respectively. Further, given any Borel measurable function $V:E\to [0,\infty)$, we shall consider the subset $\mathcal{M}_{prob}^V(E)$ of $\mathcal{M}_{prob}(E)$ consisting of all measures with finite moment with respect to $V$, i.e.
$$\mathcal{M}_{prob}^V(E):=\left\{\mu\in\mathcal{M}_{prob}(E):\; \int_E V(x)\,\mu(dx)<\infty \right\}.$$

For brevity of notation, the Lebesgue integral $\int_E f\,d\mu$ of a Borel measurable function \linebreak\hbox{$f:E\to\mathbb{R}$} with respect to a signed Borel measure $\mu$ -- if exists -- will be sometimes denoted by~$\<f,\mu\>$. Furthermore, we will write $\delta_x$ for the Dirac measure at $x\in E$ on~$\mathcal{B}(E)$. 

To describe the distance between measures, we will use the the Fortet--Mourier metric (equivalent to the metric induced by the Dudley norm \cite{b:dudley}), which on $\mathcal{M}(E)$, is defined~by
$$d_{FM,\rho}(\mu,\nu):=\sup_{f\in \Lip_{b,1}(E)} |\<f,\mu-\nu\>|\quad\text{for any}\quad \mu,\nu\in\mathcal{M}(E).\vspace{-0.1cm}$$
It is well-known that, as long as $E$ is separable (which is the case here), the metric $d_{FM,\rho}$ induces the topology of weak \hbox{convergence} of measures on $\mathcal{M}(E)$ (cf. \cite[Theorem 8]{b:dudley} or \hbox{\cite[Theorem 8.3.2]{b:bogachev}}). Let us recall here that a sequence \hbox{$\{\mu_n\}_{n\in\n}\subset\mathcal{M}(E)$} of measures is called weakly convergent to a measure $\mu\in\mathcal{M}(E)$ if $\<f,\mu_n\>\to\<f,\mu\>$, as $n\to \infty$, for any $f\in C_b(E)$.  Moreover, if~$(E,\rho)$ is complete (which is also the case in our setting), then so is the space $(\mathcal{M}_{prob}(E),d_{FM,\rho})$ (see \cite[Theorem 9]{b:dudley}).

Before further discussion, it is also useful to recall several basic concepts in the theory of Markov operators. 

A function $P:E\times\mathcal{B}(E)\rightarrow \left[0,1\right]$ is called a \emph{(sub)stochastic kernel} if for each \hbox{$A\in\mathcal{B}(E)$}, $x\mapsto P(x,A)$ is a measurable map on $E$, and for each $x\in E$, $A\mapsto P(x,A)$ is a~(sub)probability Borel measure on $\mathcal{B}(E)$. The composition of two such kernels, say $P$ and $Q$, is defined by
\begin{equation}
\label{e:comp}
PQ(x,A):=\int_X Q(y,A)P(x,dy)\quad\text{for any}\quad x\in E,\;A\in\mathcal{B}(E).
\end{equation}
According to this rule, we can also define recursively the so-called $n$-step kernel $P^n$, by setting $P^1:=P$ and $P^{n+1}:=P^nP$ for every $n\in\n$.

For any (sub)stochastic kernel $P$, we can consider two operators (which will be denoted by the same symbol), one acting on $\mathcal{M}(E)$, and the second one acting on $B_b(E)$, defined by
\begin{gather}
\label{def:markov}\mu P(A):=\int_E P(x,A)\,\mu(dx)\quad\text{for}\quad \mu\in\mathcal{M}(E),\;A\in\mathcal{B}(E),\\
\label{def:dual}Pf(x):=\int_E f(y)\, P(x,dy)\quad\text{for}\quad f\in B_b(E),\;x\in E.
\end{gather}
Note that these operators are related to each other in the following way:
$$
\<f,\mu P\>=\<Pf,\mu\>\quad\text{for any}\quad f\in B_b(E),\;\mu\in\mathcal{M}(E).
$$
Obviously, the $n$th iterations $(\cdot)P^n$ and $P^n(\cdot)$ are induced by the $n$-step kernel $P^n$. If the kernel $P$ is stochastic, then $P:\mathcal{M}(E)\to\mathcal{M}(E)$, given by \eqref{def:markov}, is called a \emph{(regular) Markov operator}, and $P:B_b(E)\to B_b(E)$, defined by \eqref{def:dual}, is said to be its dual operator. Let~us stress that formula \eqref{def:dual} will be sometimes applied, with a slight abuse of notation, to unbounded above functions as well; for example, we shall write $P\rho(\cdot,x_0)$ (for a fixed $x_0\in E$).

A family of Markov operators $\{P_t\}_{t\in\mathbb{R}_+}$ is called a \emph{Markov semigroup} if $P_s\circ P_t=P_{s+t}$ for any $s,t\geq 0$.

By a \emph{time-homogeneous Markov chain} with (one-step) transition law $P$ and initial distribution $\mu\in\mathcal{M}_{prob}(E)$ we mean a sequence of \hbox{$E$-valued} random variables \hbox{$\Phi:=\{\Phi_n\}_{n\in\n_0}$}, defined on some probability space $(\Omega,\mathcal{F},\mathbb{P}_{\mu})$, such that, for any $A\in\mathcal{B}(E)$ and $n\in\n$,
\begin{gather}
\label{def:m_init}\pr_{\mu}(\Phi_0\in A)=\mu(A),\\
\label{def:m_chain}\pr_{\mu}(\Phi_{n+1}\in A\,|\,\mathcal{F}_n)=\pr(\Phi_{n+1}\in A\;|\;\Phi_n)=P(\Phi_n,A),
\end{gather}
where $\mathcal{F}_n$ is the $\sigma$-field generated by $\Phi_0,\ldots,\Phi_n$. The expectation operator with respect to~$\pr_{\mu}$ is then denoted by $\ew_{\mu}$. In the case where $\mu=\delta_x$ with some $x\in E$, we simply write $\pr_x$ and~$\ew_x$ rather than $\pr_{\delta_x}$ and  $\ew_{\delta_x}$, respectively. Obviously $\pr_x=\pr_{\mu}(\cdot | \Phi_0=x)$ for any $x\in E$. 

One can easily check that $P$, specified by \eqref{def:m_chain}, defines a stochastic kernel on $E\times\mathcal{B}(E)$, and that, for every $k\in\n$, the $k$-step transition probabilities of $\Phi$ are determined by the kernel $P^k$, i.e. $\pr(\Phi_{n+k}\in A\;|\;\Phi_n)=P^k(\Phi_n,A)$. Consequently, it follows that the Markov operator $(\cdot)P$ describes the evolution of the distribution of $\Phi$, i.e. $\mu_{n+1}=\mu_n P$ for any $n\in\n_0$, where $\mu_n$ is the distribution of $\Phi_n$. In this connection, it is reasonable to call~$(\cdot)P$ the \emph{transition operator} of~$\Phi$. Furthermore, it is also worth noting that the dual operator of $(\cdot)P^n$ can be expressed~as 
\begin{equation}
\label{e:dual}
P^nf(x)=\ew_x[f(\Phi_n)]\quad\text{for any}\quad x\in E,\;f\in B_b(E),\;n\in\n.
\end{equation}

On the other hand, it is well-known that, for any given stochastic kernel $P$ on $E\times\mathcal{B}(E)$ and $\mu\in\mathcal{M}_{prob}(E)$, on some probability space $(\Omega,\mathcal{F},\pr_{\mu})$, there exists a time-homogeneus Markov chain $\Phi$ with transition law $P$ and initial measure $\mu$ (see e.g. \cite{b:revuz}). In practise, it is convenient to assume that $\Omega:=E^{\mathbb{N}_0}$, $\mathcal{F}:=\mathcal{B}(E^{\mathbb{N}_0})$ (where $E^{\mathbb{N}_0}$ is endowed with the product topology), and that $\{\Phi_n\}_{n\in\n_0}$ is a sequence of canonical projections from $\Omega$ to $E$, that is, $\Phi_n(\omega)=x_n$ for any $\omega=(x_0,x_1,\ldots)\in\Omega$, with $x_0,x_1,\ldots\in E$. Then, for each $\mu\in\mathcal{M}_{prob}(E)$, one can construct a probability measure $\pr_{\mu}$ on $\mathcal{F}$ such that
\begin{equation}
\label{e:canonical}
\mathbb{P}_{\mu}(F)=\int_{E}\int_{E}\ldots \int_{E}\mathbbm{1}_{A_0\times\ldots\times A_n}(x_0,\ldots,x_n)P(x_{n-1},dx_n)\ldots P(x_0,dx_1)\mu(dx_0)
\end{equation}
for any $n\in\n_0$ and $F=\{\Phi_0\in A_0,\ldots,\Phi_n\in A_n\}$, where $A_0,\ldots,A_n\in\mathcal{B}(E)$. It then follows easily that $\Phi$ obeys \eqref{def:m_init} and \eqref{def:m_chain} for every $\mu\in\mathcal{M}_{prob}(E)$, and, what is more, we have
\begin{equation}
\label{e:p_m}
\pr_{\mu}(F)=\int_X \pr_x(F)\,\mu(dx)\quad\text{for any}\quad F\in\mathcal{F},\;\mu\in\mathcal{M}_{prob}(E).
\end{equation}
The Markov chain constructed in this way is called a \emph{canonical} one.

By a \emph{time-homogeneous Markov process} with transition semigroup $\{P_t\}_{t\in\mathbb{R}_+}$ and initial distribution $\mu\in\mathcal{M}_{prob}(E)$ we mean a family of $E$-valued random variables $\Psi:=\{\Psi(t)\}_{t\in\mathbb{R}_+}$ on some probability space $(\Omega,\mathcal{F},\mathbb{P}_{\mu})$ such that, for any $A\in\mathcal{B}(E)$ and $s,t\geq 0$, \vspace{-0.1cm}
\begin{gather}
\pr_{\mu}(\Psi(0)\in A)=\mu(A),\nonumber\\
\label{def:m_proc}\pr_{\mu}(\Psi(s+t)\in A\,|\,\mathcal{F}(s))=\pr_{\mu}(\Psi(s+t)\in A\,|\,\Psi(s))=P_t(\Psi(s),A),
\end{gather}
where $\mathcal{F}(s)$ is the $\sigma$-field generated by $\{\Psi(h):h\leq s\}$. 

It is not hard to check that formula \eqref{def:m_proc} defines a family $\{P_t\}_{t\in\mathbb{R}_+}$ of stochastic kernels on \hbox{$E\times\mathcal{B}(E)$}, which indeed form a semigroup under the composition operation specified by~\eqref{e:comp}, since $P_{s+t}=P_sP_t$ for any $s,t\geq 0$. The latter is obviously equivalent to saying that the corresponding family of Markov operators is a~Markov semigroup, and therefore implies that $\mu(s+t)=\mu(s)P_t$ for any $s,t\geq 0$, where $\mu(t)$ stands for the distribution of $\Psi(t)$ for every $t\geq 0$. Analogously as in the discrete case, the dual operator of $P_t$ can be expressed as 
\begin{equation}
\label{e:dual_pt}
P_t f(x)=\ew_x[f(\Psi(t))]\quad\text{for any}\quad x\in E,\;f\in B_b(E),\;t\geq 0.
\end{equation}

Let us now briefly recall some notions concerning the ergodicity of Markov operators, which will be used throughout the paper.

First of all, a Markov operator $P$ is called \emph{Feller} if its dual operator preserves continuity, i.e. $P(C_b(E))\subset C_b(E)$. Moreover, a Markov semigroup $\{P_t\}_{t\in\mathbb{R}_+}$ is called \emph{Feller} if $P_t$ is a~Feller operator for any $t\geq 0$. 

A measure $\mu_*\in\mathcal{M}(E)$ is said to be \emph{invariant} for a Markov operator $P$ if \hbox{$\mu_* P=\mu_*$}. By analogy, we say that $\tilde{\mu}_*\in\mathcal{M}(E)$ is \emph{invariant} for a Markov semigroup $\{P_t\}_{t\in\mathbb{R}_+}$ whenever $\tilde{\mu}_* P_t=\tilde{\mu}_*$ for every $t\geq 0$. 

We finalize this section with the definitions of two properties that will be verified in the main results of this paper.\pagebreak
\begin{df}\label{def:erg_p}
Let $P$ be a transition operator of an $E$-valued Markov chain~$\Phi$. Given a Borel measurable function $V:E\to [0,\infty)$, we shall say that $P$ (or the chain $\Phi$) is \hbox{\emph{$V$-exponentially ergodic}} in $d_{FM,\rho}$ if it admits a unique invariant probability measure $\mu_*^{\Phi}$, such that $\mu_*^{\Phi}\in\mathcal{M}_{prob}^V(E)$, and there exists a constant $q\in (0,1)$ such that, for every $\mu\in\mathcal{M}_{prob}^V(E)$ and some \hbox{$C(\mu)<\infty$}, we have
$$d_{FM,\rho}\left(\mu P^n, \mu_*^{\Phi}\right)\leq C(\mu)q^n\quad\text{for any}\quad n\in\n.$$
\end{df}

\begin{df}\label{def:erg_pt}
Let $\{P_t\}_{t\in\mathbb{R}_+}$ be a transition semigroup of an $E$-valued Markov process~$\Psi$. Given a Borel measurable function $V:E\to [0,\infty)$, we shall say that $\{P_t\}_{t\in\mathbb{R}_+}$ (or the process $\Psi$) is \emph{$V$-exponentially ergodic} in~$d_{FM,\rho}$ if it admits a unique invariant probability measure $\mu_*^{\Psi}$, such that \hbox{$\mu_*^{\Psi}\in\mathcal{M}_{prob}^V(E)$}, and there exists a constant $\gamma>0$ such that, for every $\mu\in\mathcal{M}_{prob}^V(E)$ and some \hbox{$\bar{C}(\mu)<\infty$}, we have
$$d_{FM,\rho}\left(\mu P_t, \mu_*^{\Psi}\right)\leq \bar{C}(\mu)e^{-\gamma t}\quad\text{for any}\quad t\geq 0.$$
\end{df}

\section{The model under study} \label{sec:3}
Let $(Y,\rho_Y)$ be a complete separable metric space, and let $I$ be a finite set endowed with the discrete metric $\dd$, i.e. $\dd(i,j)=1$ if $i\neq j$ and $\dd(i,j)=0$ otherwise. In what follows, we shall also refer to the spaces
$$X:=Y\times I \quad \text{and} \quad \bar{X}:=X\times\mathbb{R}_+,$$ 
considered with the product topologies. Additionally, we assume that $X$ is endowed with a~metric $\rho_{X,c}$ of the form
\begin{equation}\label{def:rho_c}
\rho_{X,c}(x_1,x_2)=\rho_Y(y_1,y_2)+c\,\dd(i_1,i_2)\quad\text{for}\quad x_1=(y_1,i_1),\,x_2=(y_2,i_2)\in X,
\end{equation}
where $c$ is a given positive constant, whose value will be relevant in Section \ref{sec:main}. The Fortet-Mourier distance in $\mathcal{M}(X)$, induced by the metric $\rho_{X,c}$, will be simply denoted by $d_{FM,c}$ (rather than $d_{FM,\rho_{X,c}}$). Throughout the paper, we will also refer to the standard bounded metric induced by $\rho_{X,c}$, that is,
\begin{equation}\label{def:rho_c_standard}
\bar{\rho}_{X,c}(x_1,x_2):=\rho_{X,c}(x_1,x_2)\wedge 1\quad\text{for any}\quad x_1,x_2\in X,
\end{equation}
where $\wedge$ stands for the minimum.

Consider a collection $\{S_i:\,i\in I\}$ of jointly continuous semiflows acting from $\mathbb{R}_+\times Y$ to~$Y$. By calling $S_i$ a semiflow we mean, as usual, that
$$S_i(s,S_i(t,y))=S_i(s+t,y)\quad\text{and}\quad S_i(0,y)=y\quad\text{for any}\quad s,t\in\mathbb{R}_+,\;y\in Y.$$

Furthermore, suppose that we are given a right stochastic matrix $\{\pi_{ij}:\;i,j\in I\}$, i.e. 
$$\pi_{ij}\in\mathbb{R}_+\;\;\text{for any}\;\; i,j\in I,\quad\text{and}\quad\sum_{j\in I} \pi_{ij}=1\;\;\text{for every}\;\; i\in I,$$ 
a positive constant $\lambda$, as well as an arbitrary stochastic kernel $\p:Y\times \mathcal{B}(Y)\to [0,1]$.

Let us now define a stochastic kernel $\bar{P}:\mathcal{B}(\bar{X})\times \bar{X}\to [0,1]$ by setting
\begin{equation}
\label{def:P_bar}
\bar{P}((y,i,s), \bar{A})=\sum_{j\in I} \pi_{ij} \int_0^{\infty} \lambda e^{-\lambda h}\int_Y \mathbbm{1}_{\bar{A}}(u,j,h+s)\,\p(S_i(h,y),du)\,dh
\end{equation}
for any $y\in Y$, $i\in I$, $s\in\mathbb{R}_+$ and $\bar{A}\in\mathcal{B}(\bar{X})$. Moreover, let $P:X\times\mathcal{B}(X)\to [0,1]$ denote the kernel given by
\begin{equation}
\label{def:P}
P((y,i),A):=\bar{P}((y,i,0),A\times\mathbb{R}_+)\quad\text{for}\quad y\in Y,\;i\in I,\; A\in\mathcal{B}(X),
\end{equation}
where $\bar{P}$ is given by \eqref{def:P_bar}.

\begin{rem}\label{rem:feller}
Taking into account the continuity of the maps $t \mapsto S_i(t,\cdot)$, $t\geq 0$, $i\in I$, it is easy to see that $P$ is Feller whenever so is the kernel $\p$.
\end{rem}

By $\bar{\Phi}:=\{(Y_n,\xi_n,\tau_n)\}_{n\in\n_0}$ we will denote a time-homogeneous Markov chain with state space $\bar{X}$ and transition law~$\bar{P}$, wherein $Y_n$, $\xi_n$, $\tau_n$ take values in $Y$, $I$, $\mathbb{R}_+$, respectively. More precisely, $\bar{\Phi}$ will be regarded as the canonical Markov chain, constructed on the coordinate space $\Omega:=\bar{X}^{\n_0}$, equipped with the $\sigma$-field $\mathcal{F}:=\mathcal{B}\left(\bar{X}^{\n_0}\right)$ and a suitable family \hbox{$\{\mathbb{P}_{\nu}:\,\nu\in\mathcal{M}_{prob}(\bar{X})\}$} of probability measures on $\mathcal{F}$, where the subscript $\nu$ indicates the initial distribution of $\bar{\Phi}$. For every $\nu\in\mathcal{M}_{prob}(\bar{X})$, we therefore have
\begin{gather*}
\pr_{\nu}(\bar{\Phi}_0\in \bar{A})=\nu(\bar{A})\quad\text{for any}\quad \bar{A}\in\mathcal{B}(\bar{X}),\\
\pr_{\nu}(\bar{\Phi}_{n+1}\in \bar{A}\,|\,\bar{\Phi}_n=(y,i,s))=\bar{P}((y,i,s),\bar{A})\quad\text{for any}\quad (y,i,s)\in\bar{X},\;\bar{A}\in\mathcal{B}(\bar{X}),\;n\in\n_0.
\end{gather*}

Obviously, the sequences $\Phi:=\{(Y_n,\xi_n)\}_{n\in \n_0}$, $\{\xi_n\}_{n\in\n_0}$ and $\{\tau_n\}_{n\in\n_0}$ are Markov chains with respect to their own natural filtrations, and their transition laws satisfy
\begin{gather}
\pr_{\nu}(\Phi_{n+1}\in A\,|\,\Phi_n=(y,i))=P((y,i),A)\quad\text{for}\quad (y,i)\in X,\; A\in\mathcal{B}(X),\nonumber\\
\pr_{\nu}(\xi_{n+1}=j\,|\,\xi_n=i)=\pi_{ij}\quad\text{for}\quad i,j\in I\nonumber\\
\label{e:tau}\pr_{\nu}(\tau_{n+1}\leq t\,|\,\tau_n=s)=\mathbbm{1}_{[s,\infty)}(t)\left(1-e^{-\lambda(t-s)}\right)\quad\text{for}\quad s,t\in\mathbb{R}_+.
\end{gather}
Moreover, note that the increments $\Delta\tau_n:=\tau_{n}-\tau_{n-1}$, $n\in\n$, form a sequence of independent, exponentially distributed random variables with the same rate parameter $\lambda$, and thus $\tau_n\uparrow\infty$, as $n\to\infty$, $\pr_{\nu}$-a.s. (due to the strong law of large numbers).

The main focus of our study will be a PDMP $\Psi:=\{(Y(t),\xi(t))\}_{t\in\mathbb{R}_+}$ with jump times~$\tau_n$, $n\in\n_0$, defined via interpolation of the chain $\Phi$, so that:
\begin{equation}
\label{def:pdmp}
Y(t):=S_{\xi_n}(t-\tau_n,Y_n),\quad \xi(t)=\xi_n\quad\text{whenever}\quad t\in [\tau_n,\tau_{n+1})\;\;\text{for}\;\;n\in\n_0.
\end{equation}
The transition semigroup of this process will be denoted by $\{P_t\}_{t\in\mathbb{R}_+}$. Obviously, the discrete-time model $\Phi$ with transition law $P$, determined by \eqref{def:P}, can be viewed as the Markov chain given by the post-jump locations of $\Psi$, since 
$$\Phi_n=(Y_n,\xi_n)=(Y(\tau_n),\xi(\tau_n))=\Psi(\tau_n) \quad\text{for every} \quad n\in\n_0.$$ 
Looking at the shape of the kernel $\bar{P}$, one can say (somewhat informally) that the probability of visiting a given set $B$ right after the $(n+1)$th jump, given $S_{\xi_n}(\Delta\tau_{n+1},Y_n)=y$, is equal to $\p(y,B)$.

\begin{rem}\label{rem:spec}
As has been already mentioned in the introduction, the above-described model $(\Phi,\Psi)$ is a generalization of that considered in~\cite{b:czapla_erg} (apart from the probabilities $\pi_{ij}$, which are constant here); cf. also \cite{b:czapla_erg2}. More specifically, in~\cite{b:czapla_erg}, the kernel $\p$ is a transition law of some randomly perturbed iterated function system, i.e. it has the form:
$$
\p(y,B)=\int_{\supp \nu}\int_{\Theta}\mathbbm{1}_B(w_{\theta}(y)+v)\,p_{\theta}(y)\,\vartheta(d\theta)\nu(dv)\quad\text{for}\quad B\in\mathcal{B}(Y).
$$
In that case, $Y$ is a closed subset of a Banach space $H$, $\nu\in\mathcal{M}_{prob}(H)$ is a probability measure with bounded support, $\Theta$ stands for an arbitrary topological space, endowed with a Borel measure $\nu$, $\{w_{\theta}:\,\theta\in\Theta\}$ is a given family of continuous transformations from $Y$ to itself, such that $w_{\theta}(Y)+v\subset Y$ for any $v\in \supp\nu$, and $\{p_{\theta}:\,\theta\in\Theta\}$ is an associated set of place-dependent probabilities, i.e. functions from $Y$ to $[0,1]$ such that $\int_{\Theta} p_{\theta}(y)\,\vartheta(d\theta)=1$ for every $y\in Y$. We shall come back to this particular case in Section \ref{sec:ifs}.
\end{rem}

\section{Correspondence between invariant distributions of $\Psi$ and $\Phi$}\label{sec:4}
In the first part of the study, we shall establish a one-to-one correspondence between invariant probability measures for the transition operator $P$ of the chain~$\Phi$, induced by \eqref{def:P},  and those for the transition semigroup $\{P_t\}_{t\in \mathbb{R}_+}$ of the process $\Psi$, specified by \eqref{def:pdmp}. For this aim, let us consider the Markov operators $G,W:\mathcal{M}(X)\to\mathcal{M}(X)$ generated by the stochastic kernels given by
\begin{gather}
\label{def:g}
G((y,i),A)=\int_0^{\infty}\lambda e^{-\lambda t} \mathbbm{1}_A(S_i(t,y),i)\,dt,\\
\label{def:w}
W((y,i),A):=\sum_{j\in I} \pi_{ij}\int_Y \mathbbm{1}_A(u,j)J(y,du)
\end{gather}
for any $(y,i)\in X$ and $A\in\mathcal{B}(X)$. It is easy to check that $GW=P$.

Having defined such operators, we can state the following result:

\begin{thm}\label{tw:rel}\label{thm:cor}
Let $P$ and $\{P_t\}_{t\in\mathbb{R}_+}$ denote the transition operator of the chain $\Phi$ and the transition semigroup of the PDMP $\Psi$, respectively. Further, let $J$ be the kernel appearing in \eqref{def:P_bar}, and suppose that 
\begin{equation}
\label{e:j_cont}
Y\times\mathbb{R}_+\ni(y,t)\mapsto \p g(\cdot,t)(y) \quad \text{is jointly continuous for any} \quad g\in C_b(Y\times\mathbb{R}_+). \vspace{-0.3cm}
\end{equation}
Then
\begin{itemize}
\item[(i)]\label{cnd:1r} if $P$ admits an invariant probability measure $\mu_*^{\Phi}$, then $\mu_*^{\Psi}:=\mu_*^{\Phi} G$ is an invariant  measure for $\{P_t\}_{t\in\mathbb{R}_+}$, and $\mu_*^{\Psi} W = \mu_*^{\Phi}$;
\item[(ii)]\label{cnd:2r} if $\{P_t\}_{t\in\mathbb{R}_+}$ has an invariant probability measure $\mu_*^{\Psi}$, then \hbox{$\mu_*^{\Phi}:=\mu_*^{\Psi} W$} is an invariant measure for $P$, and $\mu_*^{\Phi} G=\mu_*^{\Psi}$.
 \end{itemize}
\end{thm}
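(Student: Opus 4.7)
The plan is to exploit the factorisation $GW = P$, which follows from \eqref{def:g}--\eqref{def:P} by a direct Fubini calculation, to reduce each of the two claims to a single identity. For (i), the relation $\mu_*^{\Psi} W = \mu_*^{\Phi}$ is immediate: $\mu_*^{\Psi} W = \mu_*^{\Phi} G W = \mu_*^{\Phi} P = \mu_*^{\Phi}$, so the content of (i) rests on establishing $(\mu_*^{\Phi} G) P_t = \mu_*^{\Phi} G$ for every $t \geq 0$. For (ii), both conclusions follow at once from the single identity $\mu_*^{\Psi} W G = \mu_*^{\Psi}$: it gives $\mu_*^{\Phi} G = \mu_*^{\Psi}$ directly, and $\mu_*^{\Phi} P = \mu_*^{\Psi} W(GW) = (\mu_*^{\Psi} W G) W = \mu_*^{\Psi} W = \mu_*^{\Phi}$.

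For (i), the main step is the probabilistic identity
\begin{equation*}
G(P_t f)(x) \;=\; \lambda\, \mathbb{E}_x\!\left[\int_t^{t+\tau_1} f(\Psi(u))\, du\right],\qquad f \in B_b(X),\; x=(y,i)\in X,\; t\geq 0,
\end{equation*}
which I would derive as follows. The memoryless property of $\tau_1 \sim \operatorname{Exp}(\lambda)$ yields $P_t f(S_i(r,y), i) = \mathbb{E}_x[f(\Psi(t+r)) \mid \tau_1 > r] = e^{\lambda r}\,\mathbb{E}_x[f(\Psi(t+r))\mathbbm{1}_{\tau_1 > r}]$; inserting this into the definition \eqref{def:g} of $G$, applying Fubini, and substituting $u = t+r$ gives the formula. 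Integrating against $\mu_*^{\Phi}$ and invoking the strong Markov property of $\Psi$ at $\tau_1$ together with $\mu_*^{\Phi} P = \mu_*^{\Phi}$ yields
\begin{equation*}
\mathbb{E}_{\mu_*^{\Phi}}\!\left[\int_{\tau_1}^{\tau_1 + t} f(\Psi(u))\,du\right] = \int_0^t \!\int P_v f\, d(\mu_*^{\Phi} P)\, dv = \int_0^t \!\int P_v f\, d\mu_*^{\Phi}\, dv = \mathbb{E}_{\mu_*^{\Phi}}\!\left[\int_0^t f(\Psi(u))\,du\right],
\end{equation*}
which is precisely what is needed for the difference between the $t \geq 0$ and $t = 0$ versions of the identity above to vanish, proving $(\mu_*^{\Phi} G) P_t = \mu_*^{\Phi} G$.

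For (ii), I would start from the Duhamel representation $P_t Gf(y,i) = e^{-\lambda t}\, Gf(S_i(t,y), i) + \int_0^t \lambda e^{-\lambda s}\, (W P_{t-s} Gf)(S_i(s, y), i)\, ds$, obtained by conditioning on $\tau_1$, combined with the elementary identity $e^{-\lambda t}\, Gf(S_i(t, y), i) = Gf(y, i) - \lambda \int_0^t e^{-\lambda u}\, f(S_i(u, y), i)\, du$, which is a one-line consequence of \eqref{def:g}. Integrating against $\mu_*^{\Psi}$ and using invariance to cancel the $\int Gf\, d\mu_*^{\Psi}$ terms yields, after dividing by $t$,
\begin{equation*}
\frac{1}{t}\int_0^t \lambda e^{-\lambda u}\!\!\int\! f(S_i(u, y), i)\, d\mu_*^{\Psi}(y,i)\, du \;=\; \frac{1}{t}\int_0^t \lambda e^{-\lambda s}\!\!\int\! (W P_{t-s} Gf)(S_i(s, y), i)\, d\mu_*^{\Psi}(y,i)\, ds.
\end{equation*}
Letting $t \to 0^+$, the left-hand side converges to $\lambda \int f\, d\mu_*^{\Psi}$ and the right-hand side to $\lambda \int W G f\, d\mu_*^{\Psi}$ (by the fundamental theorem of calculus together with joint continuity and uniform boundedness of the integrands), giving $\mu_*^{\Psi} W G = \mu_*^{\Psi}$ first on $C_b(X)$ and then on $B_b(X)$ by a standard monotone-class argument.

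The main obstacle is the analytic justification of the $t \to 0^+$ limit in (ii): one needs joint continuity of the map $(s, r, y, i) \mapsto (W P_r Gf)(S_i(s, y), i)$ at $(s, r) = (0, 0)$, together with a dominating bound uniform on a neighbourhood of the origin. The uniform bound is trivial since $\|W P_r Gf\|_\infty \leq \|Gf\|_\infty \leq \|f\|_\infty$, while the required joint continuity is exactly what the strengthened Feller hypothesis \eqref{e:j_cont} is designed to provide: it propagates through the Duhamel representation to give joint continuity of $(r, x) \mapsto P_r Gf(x)$, hence of $(r, x) \mapsto W P_r Gf(x)$, and combining with joint continuity of the semiflows $S_i$ handles the dependence on $s$ as well. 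Once these continuity points are settled, the rest of the argument reduces to the elementary manipulations outlined above.
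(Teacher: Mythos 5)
Your organisation around $GW=P$, reducing each direction to a single identity, is correct and efficient. For (i) the route you take is genuinely different from the paper's: instead of the analytic preliminaries supplied in Lemma~\ref{lem:Pt_prop} (the single-jump expansion of $P_tf$ with $o(t)$ remainder, referred to \cite[Theorem~4.4]{b:czapla_erg}), you argue probabilistically via $G(P_t f)(x)=\lambda\,\mathbb{E}_x[\int_t^{t+\tau_1}f(\Psi(u))\,du]$, memorylessness of $\tau_1$, the strong Markov property at $\tau_1$, and $\mu_*^\Phi P=\mu_*^\Phi$. That argument is complete and, notably, needs no regularity on $J$ at all. For (ii) your exact Duhamel identity is the undissected version of Lemma~\ref{lem:Pt_prop}(ii), and the algebra is right; the one spot that is not yet a proof is the $t\to 0^+$ limit on the right-hand side, which you propose to justify by ``propagating joint continuity of $(r,x)\mapsto P_rGf(x)$ through the Duhamel recursion''. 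That is vague, and also harder than necessary: the clean route is the uniform estimate $\|P_rGf-Gf\|_\infty\le 2\|f\|_\infty(1-e^{-\lambda r})$, obtained by writing $P_r Gf(v,j)=e^{-\lambda r}Gf(S_j(r,v),j)+\mathbb{E}_{(v,j)}[Gf(\Psi(r))\mathbbm{1}_{\{\tau_1\le r\}}]$ and inserting your own identity $e^{-\lambda r}Gf(S_j(r,v),j)=Gf(v,j)-\lambda\int_0^r e^{-\lambda u}f(S_j(u,v),j)\,du$ into the no-jump term. Together with the Feller property of $J$ (which makes $Gf$ and $WGf$ continuous and controls the $S_i(s,y)$ dependence) this gives bounded pointwise convergence of the integrand to $WGf$ as $(s,t-s)\to(0,0)$, and the Lebesgue-point limit goes through. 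A by-product worth noting: this step then uses only the Feller property, not the full hypothesis \eqref{e:j_cont}; the paper's route through $\psi_f$ genuinely needs \eqref{e:j_cont} because for a general $f\in C_b(X)$ the family $f(S_j(t-s,\cdot),j)$ need not converge uniformly as $t-s\to 0$, whereas $P_rGf\to Gf$ does. Finally, the concluding monotone-class appeal is unnecessary — two finite Borel measures on a Polish space that integrate all of $C_b(X)$ identically are equal.
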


\begin{rem}
Obviously, if \eqref{e:j_cont} holds, then, in particular, $J$ is Feller.
\end{rem}

Theorem \ref{thm:cor} can be proved exactly in the same way as  \hbox{\cite[Theorem 4.4]{b:czapla_erg}}, which refers to the case where the kernel $\p$, appearing in \eqref{def:P_bar}, is defined explicitly (as mentioned in Remark~\ref{rem:spec}). In order to adapt this proof to our setting (with an arbitrary stochastic kernel~$\p$), one only needs to establish the properties collected in the lemma below.
\begin{lem}\label{lem:Pt_prop}
The following statements hold for the transition semigroup $\{P_t\}_{t\in\mathbb{R}_+}$ of the process~$\Psi$:
\begin{enumerate}[label=(\roman*)]
\item\label{lem:i}  If $\p$ is Feller, then $\{P_t\}_{t\in\mathbb{R}_+}$ is Feller.
\item\label{lem:ii} For any $f\in B_b(X)$, there exists a bounded Borel measurable function $u_f:X\times \mathbb{R}_+\to\mathbb{R}$ such that $\lim_{t\to 0} \norma{u_f(\cdot,t)}_{\infty}/t=0$, and 
$$P_t f(y,i)=e^{-\lambda t} f(S_i(t,y),i)+\lambda e^{-\lambda t}\int_0^t \psi_f((y,i),s,t)\,ds+u_f((y,i),t),$$
for any $(y,i)\in X$ and $t>0$, where 
$$\psi_f((y,i),s,t):=\sum_{j\in I}\pi_{ij}\int_Y f(S_j(t-s,u),j)\p(S_i(s,y),du)\quad\text{for}\quad s\in [0,t],\;t>0.$$
\item\label{lem:iii} $\{P_t\}_{t\in\mathbb{R}_+}$ is stochastically continuous, i.e. 
$$\lim_{t\to 0} P_t f(y,i)=f(y,i)\quad\text{for any}\quad (y,i)\in X,\; f\in C_b(X).$$
\item\label{lem:iv}  If \eqref{e:j_cont} holds then, for any function \hbox{$s:\mathbb{R}_+\to\mathbb{R}_+$} satisfying $0\leq s(t)\leq t$ for all $t\geq 0$, the map $t\mapsto \psi_f((y,i),s(t),t)$ is continuous at $t=0$ for every $f\in C_b(X)$ and any $(y,i)\in X$. Moreover, $\psi_f(\cdot,0,0)=Wf$, where $W$ is the operator induced by \eqref{def:w}.
\end{enumerate}
\end{lem}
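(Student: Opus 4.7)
The plan is to prove item \ref{lem:ii} first, since it gives an explicit short-time expansion for $P_t f$, and then to read off items \ref{lem:i}, \ref{lem:iii} and \ref{lem:iv} from it together with the Feller property of $\p$ and hypothesis \eqref{e:j_cont}.

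For \ref{lem:ii}, I would set $N(t):=\#\{n\in\n:\,\tau_n\leq t\}$ and decompose $P_t f(y,i)=\ew_{(y,i,0)}[f(\Psi(t))]$ according to whether $N(t)=0$, $N(t)=1$, or $N(t)\geq 2$. Since the inter-jump times are i.i.d.\ exponential of rate $\lambda$, one has $\pr_{(y,i,0)}(\tau_1\in ds,\,\tau_2>t)=\lambda e^{-\lambda t}\,ds$ on $[0,t]$. On $\{N(t)=0\}$ (probability $e^{-\lambda t}$) the trajectory is driven solely by $S_i$, so $\Psi(t)=(S_i(t,y),i)$ and this contributes $e^{-\lambda t}f(S_i(t,y),i)$. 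On $\{N(t)=1\}$, the structure of $\bar{P}$ in \eqref{def:P_bar} together with \eqref{e:canonical} shows that the conditional law of $(Y_1,\xi_1)$ given $\tau_1=s$ is $\sum_{j\in I}\pi_{ij}\,\p(S_i(s,y),\cdot)\otimes\delta_j$; integrating $f(S_{\xi_1}(t-s,Y_1),\xi_1)$ against this and against the joint density of $(\tau_1,\{\tau_2>t\})$ yields exactly $\lambda e^{-\lambda t}\int_0^t\psi_f((y,i),s,t)\,ds$. The residual term $u_f((y,i),t):=\ew_{(y,i,0)}[f(\Psi(t))\mathbbm{1}_{\{N(t)\geq 2\}}]$ is jointly Borel measurable in $((y,i),t)$ by Fubini, and
\[
\norma{u_f(\cdot,t)}_{\infty}\leq\norma{f}_{\infty}\bigl(1-e^{-\lambda t}(1+\lambda t)\bigr)=O(t^2),
\]
which gives the required $o(t)$ bound.

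Items \ref{lem:i} and \ref{lem:iii} follow quickly from \ref{lem:ii}. For \ref{lem:i}, I would use the decomposition $P_tf(y,i)=\sum_{n\in\n_0}\ew_{(y,i,0)}[\mathbbm{1}_{\{\tau_n\leq t<\tau_{n+1}\}}f(S_{\xi_n}(t-\tau_n,Y_n),\xi_n)]$: each summand is continuous in $(y,i)$ by the Feller property of $\p$ (hence of $P$, by Remark \ref{rem:feller}) combined with joint continuity of the semiflows, and the series converges uniformly since the $n$th summand is dominated by $\norma{f}_{\infty}e^{-\lambda t}(\lambda t)^n/n!$. For \ref{lem:iii}, letting $t\to 0$ in the formula from \ref{lem:ii}, the leading term tends to $f(y,i)$ by continuity of $t\mapsto S_i(t,y)$ at $0$, the middle term is bounded by $\lambda t\norma{f}_{\infty}$, and $u_f((y,i),t)=o(t)$.

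For \ref{lem:iv}, the identity $\psi_f((y,i),0,0)=Wf(y,i)$ is immediate from $S_j(0,u)=u$ and the definition of $W$ in \eqref{def:w}. To check continuity at $t=0$, I would fix $j\in I$, put $g_j(u,r):=f(S_j(r,u),j)\in C_b(Y\times\mathbb{R}_+)$ (using joint continuity of $S_j$ and continuity and boundedness of $f$), and rewrite
\[
\psi_f((y,i),s(t),t)=\sum_{j\in I}\pi_{ij}\bigl(\p g_j(\cdot,t-s(t))\bigr)(S_i(s(t),y)).
\]
Hypothesis \eqref{e:j_cont} makes $(y',r)\mapsto \p g_j(\cdot,r)(y')$ jointly continuous, and since $0\leq s(t)\leq t$ forces $s(t)\to 0$ and $t-s(t)\to 0$ as $t\to 0^+$ (whence also $S_i(s(t),y)\to y$), the right-hand side converges to $\sum_{j\in I}\pi_{ij}\int_Y f(u,j)\,\p(y,du)=Wf(y,i)$. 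The main technical obstacle in the whole argument is the one-jump computation in \ref{lem:ii}: identifying the joint law of $(\tau_1,Y_1,\xi_1)$ restricted to $\{N(t)=1\}$ from $\bar{P}$ and verifying joint Borel measurability of $u_f$ in $(y,i,t)$. This is routine once the canonical construction \eqref{e:canonical} is unwound, but must be written out carefully.
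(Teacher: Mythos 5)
Your proposal is correct and follows essentially the same route as the paper: both decompose $P_tf$ over the number of jumps in $[0,t]$, identify the zero- and one-jump contributions as the two explicit terms in \ref{lem:ii}, bound the remainder by the Poisson tail $\norma{f}_{\infty}(1-e^{-\lambda t}(1+\lambda t))$, deduce \ref{lem:i} from uniform convergence of the series plus the Feller property of $\p$, and obtain \ref{lem:iv} by writing $\psi_f$ as $\p$ applied to a continuous function and invoking \eqref{e:j_cont}. The only (cosmetic but slightly advantageous) difference is in \ref{lem:iv}, where you keep $g_j(u,r)=f(S_j(r,u),j)$ fixed and feed $r=t-s(t)$ into the second argument, which avoids any implicit continuity requirement on $t\mapsto s(t)$ that the paper's choice of $g$ would suggest.
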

\begin{proof}
Throughout the proof, we will write $\bar{x}:=(x,0)$ for any given $x\in X$. 
Moreover, for every~$i\in I$, we put $f(S_i(h,\cdot)):=0$ if $h<0$.

Let $t\in\mathbb{R}_+$ and $f\in B_b(X)$. Then, according to \eqref{e:dual_pt}, for every $x\in X$, we can~write\vspace{-0.3cm}
\begin{align}
\begin{split}
\label{e:pt_ew}
P_t f(x)&=\ew_{\bar{x}}f(Y(t),\xi(t))=\sum_{n=0}^{\infty} \ew_{\bar{x}}\left[\mathbbm{1}_{[\tau_n,\tau_{n+1})}(t)f(S_{\xi_n}(t-\tau_n,Y_n),\xi_n) \right]\\
&=\sum_{n=0}^{\infty} \ew_{\bar{x}}\left[\mathbbm{1}_{[0,t]}(\tau_n)f(S_{\xi_n}(t-\tau_n,Y_n),\xi_n)\cdot\mathbbm{1}_{(t,\infty)}(\tau_{n+1}) \right].
\end{split}
\end{align}
Taking into account \eqref{e:canonical}, it is clear that, for any $g,h\in B_b(\bar{X})$ and $n\in\n_0$,
\begin{align}
\begin{split}
\label{e:dod}
\ew_{\bar{x}}[g(\bar{\Phi}_n)h(\bar{\Phi}_{n+1})]&=\int_{\bar{X}}\int_{\bar{X}} g(w)h(z)\,\bar{P}(w,dz)\bar{P}^n(\bar{x},dw)\\
&=\int_{\bar{X}}g(w)\bar{P}h(w)\bar{P}^n(\bar{x},dw)=\bar{P}^n(g\bar{P}h)(\bar{x}).
\end{split}
\end{align}
Hence, defining
\begin{equation}
\label{def:gh}
g_t(u,j,s):=\mathbbm{1}_{[0,t]}(s)f(S_j(t-s,u),j)\quad\text{and}\quad
h_t(u,j,s):=\mathbbm{1}_{(t,\infty)}(s)
\end{equation}
for any $u\in Y$, $j\in I$ and $s\in\mathbb{R}_+$, we see that
\begin{align}
\begin{split}
\label{e:decomp}
P_t f(x)&=\sum_{n=0}^{\infty}\ew_{\bar{x}}[g_t(Y_n,\xi_n,\tau_n)h_t(Y_{n+1},\xi_{n+1},\tau_{n+1})]\\&=\sum_{n=0}^{\infty}\bar{P}^n(g_t\bar{P}h_t)(\bar{x})\quad\text{for every}\quad x\in X.
\end{split}
\end{align}

\textbf{\ref{lem:i}}: Suppose that the kernel $\p$ is Feller, and that $f\in C_b(X)$. To prove that $P_tf$ is continuous, we first observe that, for any function $\varphi\in B_b(\bar{X})$ such that $X\ni x\mapsto\varphi(x,s)$ is continuous for every $s\geq 0$ (which is the case for $g_t$ and $h_t$), the map \hbox{$X\ni x\mapsto \bar{P}\varphi(x,s)$} is continuous for any $s\geq 0$ as well, since
$$\bar{P}\varphi(x,s)=\sum_{j\in I}\pi_{ij}\int_0^{\infty}\lambda e^{-\lambda h}\, \p \varphi(\cdot,j,h+s)(S_i(h,y))\,dh\;\;\text{for any}\;\;x=(y,i)\in X,\; s\geq 0.$$
This implies that all the maps $X\ni x\mapsto \bar{P}^n(g_t\bar{P}h_t)(\bar{x})$, $t\geq 0$, $n\in \n_0$, are continuous. Further, considering the Poisson process $(N(s))_{s\in\mathbb{R}_+}$ of the form
\begin{equation}
\label{def:poisson}
N(s):=\max\{n\in\n_0:\,\tau_n\leq s\},\quad s\geq 0,
\end{equation}
and bearing in mind \eqref{e:dod}, we see that
\begin{align}
\begin{split}
\label{e:com_bound}
|\bar{P}^n(g_t\bar{P}h_t)(\bar{x})|&=\left|\ew_{\bar{x}}[\mathbbm{1}_{\{N(t)=n\}}f(S_{\xi_n}(t-\tau_n,Y_n),\xi_n)) ]\right|\leq \norma{f}_{\infty}\pr_{\bar{x}}(N(t)=n)\\
&=\norma{f}_{\infty}e^{-\lambda t}\frac{(\lambda t)^n}{n!}\quad \text{for any}\quad \bar{x}\in\bar{X}, \;n\in\n_0.
\end{split}
\end{align}
Using \eqref{e:decomp}, \eqref{e:com_bound} and the discrete analogue of the Lebesgue dominated convergence theorem, we can therefore conclude that $P_tf$ is indeed continuous.

\textbf{\ref{lem:ii}}: Let us define
$$u_f(x,t):=\sum_{n=2}^{\infty}\bar{P}^n(g_t\bar{P}h_t)(\bar{x})\quad\text{for}\quad x\in X.$$ 
From \eqref{e:decomp} it now follows that
\begin{equation}
\label{e:decomp2}
P_t f(x)=g_t\bar{P}h_t(\bar{x})+\bar{P}(g_t\bar{P}h_t)(\bar{x})+u_f(x,t)\quad\text{for any}\quad x\in X.
\end{equation}
Referring to \eqref{e:com_bound}, we get
\begin{align}
\begin{split}
\label{e:est_rest}
\frac{|u_f(x,t)|}{t}&\leq \norma{f}_{\infty}\frac{1}{t}e^{-\lambda t}\sum_{n=2}^{\infty} \frac{(\lambda t)^n}{n!}=\norma{f}_{\infty}\frac{1}{t}e^{-\lambda t}(e^{\lambda t}-1-\lambda t)\\
&=\norma{f}_{\infty}\left(\frac{1-e^{-\lambda t}}{t}-\lambda e^{-\lambda t} \right)
\end{split}
\end{align}
for any $x\in X$, which obviously gives $\lim_{t\to 0} \norma{u_f(\cdot,t)}_{\infty}/t=0$. Further, having in mind \eqref{def:P_bar} and \eqref{def:gh}, we obtain
\begin{align*}
g_t\bar{P}h_t(y,i,s)&=\mathbbm{1}_{[0,t]}(s)f(S_i(t-s,y),i)\int_0^{\infty}\lambda e^{-\lambda h}\mathbbm{1}_{(t,\infty)}(h+s)\,dh
\\
&=\mathbbm{1}_{[0,t]}(s)f(S_i(t-s,y),i) e^{-\lambda(t-s)}\quad\text{for any}\quad y\in Y,\; i\in I,\; s\geq 0,
\end{align*}
and, in particular (for $s=0$),
\begin{equation}
\label{e:gph}
g_t\bar{P}h_t(\bar{x})=g_t\bar{P}h_t(y,i,0)=e^{-\lambda t}f(S_i(t,y),i)\quad\text{for every}\quad x=(y,i)\in X.
\end{equation}
Furthermore, appealing to \eqref{def:P_bar} and \eqref{e:gph}, we can also conclude that
\begin{align}
\begin{split}
\label{e:ngph}
\bar{P}(g_t\bar{P}h_t)(\bar{x})&=\sum_{j\in I} \pi_{ij} \int_0^{\infty} \lambda e^{-\lambda h} \int_Y g_t(u,j,h)\bar{P}h_t(u,j,h)\,\p(S_i(h,y),du)\,dh\\
&=\sum_{j\in I} \pi_{ij} \int_0^{\infty} \lambda e^{-\lambda h}\int_Y\mathbbm{1}_{[0,t]}(h)f(S_j(t-h,u),j) e^{-\lambda(t-h)}\,\p(S_i(h,y),du)\,dh\\
&=\lambda e^{-\lambda t}\int_0^t \sum_{j\in I}\pi_{ij}\int_Y f(S_j(t-h,u),j)\,\p(S_i(h,y),du)\,dh\\
&=\lambda e^{-\lambda t}\int_0^t \psi_f((y,i),h,t)\,dh \quad\text{for all}\quad x=(y,i)\in X.
\end{split}
\end{align}
Finally, assertion \ref{lem:ii} follows from \eqref{e:decomp2}-\eqref{e:ngph}.

\textbf{\ref{lem:iii}}: Condition \ref{lem:iii} follows immediately from \ref{lem:ii} and the boundedness of $\psi_f$.

\textbf{\ref{lem:iv}}: For the proof of (iv), fix $f\in C_b(X)$, $(y,i)\in X$, and define
$$g(u,t):=\sum_{j\in I} \pi_{ij} f(S_j(t-s(t),u),j) \quad\text{for}\quad (u,t)\in Y\times\mathbb{R}_+.$$
Since $S_j$ are jointly continuous, so is $g$, and thus $g\in C_b(Y\times\mathbb{R}_+)$. Moreover, we have $$Jg(\cdot,t)(S_i(s(t),y))=\psi_f((y,i),s(t),t)\quad\text{for any}\quad t\geq 0.$$ Hence, $s\mapsto \psi_f((y,i),s(t),t)$ is continuous at $0$ whenever \eqref{e:j_cont} holds and $s(t)\to 0$, as $t\to 0$. The identity $\psi_f(\cdot,0,0)=Wf$ is just a consequence of the definition of $W$.
\end{proof}
%

\section{A coupling argument (involving $\Phi$) for establishing the exponential ergodicity in $d_{FM,c}$ of both $\Phi$ and $\Psi$}\label{sec:coup}
The main goal of this section is to prove that the existence of an appropriate coupling between two copies of the chain $\Phi$ (starting from different initial distributions), which brings them closer to each other on average at a geometric rate, combined with a Foster-Lyapunov type condition related to $P$ and some reasonable assumption on the semiflows $S_i$, implies that both $\Phi$ and $\Psi$ are exponentially ergodic in the Fortet--Mourier metric.

More specifically, we shall consider a coupling between two copies of the chain $\Phi$, enhanced with a copy of $\{\tau_n\}_{n\in\n_0}$, that is, a time-homogeneous Markov chain $\widehat{\Phi}:=\{(\Phi_n^{(1)},\Phi_n^{(2)}, \widetilde{\tau}_n)\}_{n\in\n_0}$ evolving on \hbox{$Z:=X^2\times\mathbb{R}_+$}, whose transition law $\widehat{P}:Z\times\mathcal{B}(Z)\to [0,1]$ satisfies
\begin{equation}\label{def:coup}
\begin{gathered}
\widehat{P}((x_1,x_2,s),A\times X\times\mathbb{R}_+)=P(x_1,A),\quad \widehat{P}((x_1,x_2,s),X\times A\times\mathbb{R}_+)=P(x_2,A),\\
\widehat{P}((x_1,x_2,s),X^2\times T)=\int_0^{\infty} \lambda e^{-\lambda t}\mathbbm{1}_T(t+s)\,dt=:E_{\lambda}(s,T)
\end{gathered}
\end{equation}
for any $x_1,x_2\in X$, $s\geq 0$, $A\in\mathcal{B}(X)$ and $T\in\mathcal{B}(\mathbb{R}_+)$.

Such an augmented coupling $\widehat{\Phi}$ will be regarded as a canonical Markov chain, defined on the coordinate space $(\widehat{\Omega},\widehat{\mathcal{F}})$, with $\widehat{\Omega}:=Z^{\mathbb{N}_0}$ and $\widehat{\mathcal{F}}:=\mathcal{B}\left(Z^{\mathbb{N}_0}\right)$, equipped with an appropriate collection  $\{\widehat{\pr}_{(\mu_1,\mu_2)}:\,\mu_1,\mu_2\in\mathcal{M}_{prob}(X)\}$ of probability measures on $\widehat{\mathcal{F}}$ such that
\begin{gather*}
\widehat{\pr}_{(\mu_1,\mu_2)}(\widehat{\Phi}_0\in D)=(\mu_1\otimes \mu_2\otimes\delta_0)(D)\quad\text{for any}\quad D\in\mathcal{B}(Z),\\
\widehat{\pr}_{(\mu_1,\mu_2)}(\widehat{\Phi}_{n+1}\in D\,|\,\widehat{\Phi}_n=z)=\widehat{P}(z,D)\quad\text{for any}\quad z\in Z,\;D\in\mathcal{B}(Z),\;n\in\n_0,
\end{gather*}
where $\delta_0$ stands for the Dirac measure at $0$ on $\mathcal{B}(\mathbb{R}_+)$. The expectation operator corresponding to $\widehat{\pr}_{(\mu_1,\mu_2)}$ will be denoted by $\widehat{\ew}_{(\mu_1,\mu_2)}$. In the case where $\mu_1=\delta_{x_1}$ and $\mu_2=\delta_{x_2}$ with some $x_1,x_2\in X$, we will write $(x_1,x_2)$ instead of $(\delta_{x_1},\delta_{x_2})$ in the subscripts.

We begin the analysis by establishing a general connection between the exponential ergodicity of $P$ and the existence of an appropriate coupling of $\Phi$, based on a Foster-Lyapunov type condition. It is worth noting that the result, in fact, does not depend on the shape of the transition law $P$.
\begin{lem}\label{lem:a}
Suppose that $P$ is Feller. Furthermore, assume that the transition law $\widehat{P}$ of the chain $\widehat{\Phi}$, satisfying \eqref{def:coup}, can be constructed so that
\begin{equation}
\label{e:coup_prop_v}
\widehat{\ew}_{(x_1,x_2)}\left[\bar{\rho}_{X,c}\left(\Phi_n^{(1)},\Phi_n^{(2)}\right)\right]\leq C_0(V(x_1)+V(x_2)+1)q^n\;\;\text{for all}\;\; n\in\n,\; x_1,x_2\in X,
\end{equation}
where is $\bar{\rho}_{X,c}$ is given by \eqref{def:rho_c_standard}, $q\in (0,1)$, $C_0<\infty$, and $V:X\to [0,\infty)$ is a continuous function such that, for some $a\in (0,1)$ and some $b\in (0,\infty)$, we have
\begin{equation}
\label{e:lyapunov}
PV(x)\leq aV(x)+b\quad\text{for all}\quad x\in X.
\end{equation}
Then there exists a unique invariant distribution $\mu_*^{\Phi}$ for $P$ such that $\mu_*^{\Phi}\in\mathcal{M}_{prob}^V(X)$ and 
\begin{equation}
\label{e:erg_disc}
d_{FM,c}(\mu P^n,\mu_*^{\Phi})\leq C_0(\<V,\mu\>+\<V,\mu_*^{\Phi}\>+1)q^n\;\;\text{for any}\;\;n\in\n,\; \mu\in\mathcal{M}_{prob}(X).
\end{equation}
\end{lem}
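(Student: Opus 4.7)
The plan is to exploit the coupling inequality \eqref{e:coup_prop_v} to obtain a contraction of $P^n$ in $d_{FM,c}$, and then combine this with the iterated version of \eqref{e:lyapunov} to produce an invariant measure as a weak limit of $\{\mu P^n\}$.

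First I would note that every $f\in\Lip_{b,1}(X)$ satisfies $|f(x_1)-f(x_2)|\leq\bar{\rho}_{X,c}(x_1,x_2)$, since $0\leq f\leq 1$ on the one hand and $f$ is $1$-Lipschitz on the other. Combining this with the marginal properties in \eqref{def:coup}, I get for any $x_1,x_2\in X$
\begin{align*}
|P^nf(x_1)-P^nf(x_2)| &= \left|\widehat{\ew}_{(x_1,x_2)}\!\left[f(\Phi_n^{(1)})-f(\Phi_n^{(2)})\right]\right|\\
&\leq \widehat{\ew}_{(x_1,x_2)}\bar{\rho}_{X,c}\!\left(\Phi_n^{(1)},\Phi_n^{(2)}\right)\\
&\leq C_0(V(x_1)+V(x_2)+1)q^n.
\end{align*}
Taking a supremum over $f$ and integrating against $\mu_1\otimes\mu_2$ for arbitrary $\mu_1,\mu_2\in\mathcal{M}_{prob}^V(X)$ then yields the key contraction
$$d_{FM,c}(\mu_1 P^n,\mu_2 P^n)\leq C_0(\langle V,\mu_1\rangle+\langle V,\mu_2\rangle+1)q^n,\qquad(\star)$$
which is the workhorse of the rest of the proof. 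Iterating \eqref{e:lyapunov} gives $\langle V,\mu P^n\rangle\leq a^n\langle V,\mu\rangle+b/(1-a)$ via the dual identity $\langle V,\mu P^n\rangle=\langle P^nV,\mu\rangle$, so $\{\langle V,\mu P^n\rangle\}_n$ stays bounded whenever $\mu\in\mathcal{M}_{prob}^V(X)$.

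Applying $(\star)$ to the pair $(\mu,\mu P^k)$ then shows
$$d_{FM,c}(\mu P^n,\mu P^{n+k})\leq C_0\bigl(2\langle V,\mu\rangle+b/(1-a)+1\bigr)q^n\quad\text{for all } k,n\in\n,$$
so $\{\mu P^n\}_n$ is Cauchy in the complete space $(\mathcal{M}_{prob}(X),d_{FM,c})$, and hence weakly converges to some $\mu_*^{(\mu)}$. Two further applications of $(\star)$ yield $d_{FM,c}(\mu_*^{(\mu)},\mu_*^{(\nu)})\leq C_0(\langle V,\mu\rangle+\langle V,\nu\rangle+1)q^n\to 0$, so the limit is independent of the starting measure; I will denote it by $\mu_*^{\Phi}$. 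The Feller property of $P$ implies that $(\cdot)P$ is weakly continuous, hence passing to the limit in $\mu P^{n+1}=(\mu P^n)P$ yields $\mu_*^{\Phi} P=\mu_*^{\Phi}$. For the moment bound, I truncate: since $V$ is continuous, $V\wedge M\in C_b(X)$ for each $M>0$, so weak convergence gives $\langle V\wedge M,\mu_*^{\Phi}\rangle=\lim_n\langle V\wedge M,\mu P^n\rangle\leq b/(1-a)$, and monotone convergence as $M\to\infty$ yields $\langle V,\mu_*^{\Phi}\rangle\leq b/(1-a)<\infty$.

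Finally, the estimate \eqref{e:erg_disc} follows from $(\star)$ applied with $\mu_2=\mu_*^{\Phi}$, upon invoking $\mu_*^{\Phi} P^n=\mu_*^{\Phi}$. Uniqueness within $\mathcal{M}_{prob}^V(X)$ is automatic: any other invariant $\tilde\mu_*\in\mathcal{M}_{prob}^V(X)$ satisfies $d_{FM,c}(\mu_*^{\Phi},\tilde\mu_*)=d_{FM,c}(\mu_*^{\Phi} P^n,\tilde\mu_* P^n)\to 0$ by $(\star)$. The main delicate point is the passage to the $V$-moment of $\mu_*^{\Phi}$: weak convergence under $d_{FM,c}$ does not directly test the (possibly unbounded) $V$, and the truncation step works only thanks to the continuity of $V$, which ensures $V\wedge M\in C_b(X)$. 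The remainder is a standard Cauchy-sequence-in-a-complete-metric-space scheme driven by the coupling inequality and the iterated Lyapunov bound.
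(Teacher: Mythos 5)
Your argument follows essentially the same route as the paper's: the coupling inequality plus the dual identity gives the contraction $(\star)$, the iterated Lyapunov bound makes $\{\mu P^n\}$ Cauchy in the complete space $(\mathcal{M}_{prob}(X),d_{FM,c})$, the Feller property passes invariance to the weak limit, the truncation $V\wedge M$ with monotone convergence gives the $V$-moment bound, and $(\star)$ with the invariance of $\mu_*^{\Phi}$ yields \eqref{e:erg_disc}. (Two cosmetic remarks: the line $d_{FM,c}(\mu_*^{(\mu)},\mu_*^{(\nu)})\leq C_0(\<V,\mu\>+\<V,\nu\>+1)q^n\to 0$ needs the triangle inequality through $\mu P^n$ and $\nu P^n$ to make literal sense, since the left-hand side does not depend on $n$; and $(\star)$ holds for arbitrary $\mu_1,\mu_2\in\mathcal{M}_{prob}(X)$, being vacuous when a $V$-moment is infinite, which is how \eqref{e:erg_disc} covers all of $\mathcal{M}_{prob}(X)$.)

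The one substantive shortfall is the uniqueness step. You prove uniqueness only within $\mathcal{M}_{prob}^V(X)$, because your argument $d_{FM,c}(\mu_*^{\Phi},\tilde\mu_*)=d_{FM,c}(\mu_*^{\Phi}P^n,\tilde\mu_*P^n)\leq C_0(\<V,\mu_*^{\Phi}\>+\<V,\tilde\mu_*\>+1)q^n$ is vacuous when $\<V,\tilde\mu_*\>=\infty$. The lemma (consistently with Definition \ref{def:erg_p} and with the paper's proof, which explicitly shows ``there are no other invariant measures for $P$'') asserts that $\mu_*^{\Phi}$ is the \emph{only} invariant probability measure, with no moment restriction on competitors. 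The paper closes this gap with a short extra argument: since $V$ is finite-valued, $\delta_x\in\mathcal{M}_{prob}^V(X)$ for every $x\in X$, so \eqref{e:erg_disc} gives $P^nf(x)=\<f,\delta_xP^n\>\to\<f,\mu_*^{\Phi}\>$ pointwise for every $f\in C_b(X)$; then for an arbitrary $\mu\in\mathcal{M}_{prob}(X)$ the bound $\|P^nf\|_{\infty}\leq\|f\|_{\infty}$ and dominated convergence yield $\<f,\mu P^n\>=\<P^nf,\mu\>\to\<f,\mu_*^{\Phi}\>$, i.e. $\mu P^n\to\mu_*^{\Phi}$ weakly for every initial law, which rules out any other invariant measure. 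You should append this step; everything else in your proposal is sound.
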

\begin{proof}
First of all, note that, for any $\mu_1,\mu_2\in\mathcal{M}_{prob}(X)$ and $n\in\n$,
\begin{equation}
\label{e:mix}
d_{FM,c}(\mu_1 P^n, \mu_2 P^n)\leq C_0(\<V,\mu_1\>+\<V,\mu_2\>+1)q^n.
\end{equation}
To see this, it suffices to observe that, for every $f\in \Lip_{b,1}(X)$,
\begin{align*}
|\<f,\mu_1 P^n-\mu_2 P^n\>|&=\left|\widehat{\ew}_{(\mu_1,\mu_2)}\left[f(\Phi_n^{(1)})-f(\Phi_n^{(2)})\right]\right|\leq \widehat{\ew}_{(\mu_1,\mu_2)}\left|f(\Phi_n^{(1)})-f(\Phi_n^{(2)})\right|\\
&\leq  \widehat{\ew}_{(\mu_1,\mu_2)}\left[\bar{\rho}_{X,c}\left(\Phi_n^{(1)},\Phi_n^{(2)}\right)\right]\\
&=\int_{X^2} \widehat{\ew}_{(x_1,x_2)}\left[\bar{\rho}_{X,c}\left(\Phi_n^{(1)},\Phi_n^{(2)}\right)\right](\mu_1\otimes\mu_2)(dx_1,dx_2)\\
&\leq C_0 \left(\int_X\int_X(V(x_1)+V(x_2)+1)\,\mu_1(dx_1)\mu_2(dx_2)\right)q^n\\
&=C_0(\<V,\mu_1\>+\<V,\mu_2\>+1)q^n,
\end{align*}
where the first equality follows from \eqref{e:dual}, and the second one is due to \eqref{e:p_m}.

The next step is to prove that $P$ admits an invariant probability measure. For this purpose, let us fix arbitrarily $x_0\in X$ and notice that $\{\delta_{x_0} P^n\}_{n\in\n}$ is a Cauchy sequence in the metric space $(\mathcal{M}_{prob}(X), d_{FM,c})$. Indeed, applying~\eqref{e:mix} with $\mu_1=\delta_{x_0}$ and $\mu_2=\delta_{x_0} P^k$ (for each $k\in\n_0$), together with \eqref{e:lyapunov}, we can conclude that
\begin{align*}
d_{FM,c}(\delta_{x_0} P^n, \delta_{x_0} P^{k+n})&\leq C_0(V(x_0)+P^kV(x_0)+1)q^n\\
& \leq C_0\left(V(x_0)+a^k V(x_0)+\frac{b}{1-a}+1\right)q^n\\
&\leq C_0\left(2V(x_0)+\frac{b}{1-a}+1\right)q^n \quad\text{for any}\quad n\in\n,\;k\in\n_0.
\end{align*}
Consequently, since $(\mathcal{M}_{prob}(X), d_{FM,c})$ is complete, $\{\delta_{x_0} P^n\}_{n\in\n}$ is weakly convergent to some \hbox{$\mu_*\in\mathcal{M}_{prob}(X)$}. From the Feller property it follows that $\mu_*$ is invariant for~$P$, since
$$\<f,\mu_* P\> =\<Pf,\mu_*\> = \lim_{n\to\infty}\<Pf,\delta_{x_0} P^n\>=\lim_{n\to\infty}\<f,\delta_{x_0} P^{n+1}\>=\<f,\mu_*\>\quad\text{for any}\quad f\in C_b(X).$$ 
Obviously, \eqref{e:mix}, together with the invariance of $\mu_*$, ensure that \eqref{e:erg_disc} holds with $\mu_*^{\Phi}:=\mu_*$.

In order to show that $\mu_*\in\mathcal{M}_{prob}^V(X)$, consider $V_k(x):=\min (V(x),k)$ for any $x\in X$ and $k\in\n$. Then $\{V_k\}_{k\in\n}$ is a non-decreasing sequence of functions of $C_b(X)$. From \eqref{e:lyapunov} it follows that
$$P^n V_k \leq P^n V\leq a^n V+\frac{b}{1-a}\quad\text{for all}\quad k,n\in\n,\vspace{-0.3cm}$$
whence
$$\<V_k,\mu_*\>=\lim_{n\to\infty} \<V_k,\delta_{x_0} P^n\> =\lim_{n\to\infty}P^n V_k(x_0)\leq \frac{b}{1-a}\quad\text{for every}\quad k\in\n.$$
By using the Lebesgue monotone convergence theorem, we therefore obtain
\begin{equation}
\label{e:moment}
\<V,\mu_*\>=\lim_{k\to\infty} \<V_k,\mu_*\>\leq \frac{b}{1-a}<\infty.
\end{equation}

What is left is to show that there are no other invariant measures for $P$. To this end, it is enough to know
\begin{equation}
\label{e:asymp}
\lim_{n\to\infty} d_{FM,c}(\mu P^n, \mu_*)=0\quad\text{for any}\quad\mu\in\mathcal{M}_{prob}(X),
\end{equation}
but this can be easily derived from \eqref{e:erg_disc} and \eqref{e:moment}. More precisely, these conditions guarantee that \eqref{e:asymp} holds for any $\mu\in\mathcal{M}_{prob}^V(X)$, and so, in particular, we have 
$$P^n f(x)=\<f,\delta_x P^n\>\to \<f,\mu_*\>,\;\;\text{as}\;\;n\to \infty,\;\; \text{for any}\;\;x\in X\;\; \text{and} \;\;f\in C_b(X).$$ 
Now, letting $\mu$ be an arbitrary probability measure and applying the Lebesgue dominated convergence theorem, we obtain
$$\<f, \mu P^n\>=\<P^n f,\mu\>\to \<f,\mu_*\>,\;\;\text{as}\;\;n\to\infty,\;\;\text{for any}\;\; f\in C_b(X),$$
which obviously gives \eqref{e:asymp} and completes the proof.
\end{proof}

Given an augmented coupling  $\widehat{\Phi}:=\{(\Phi_n^{(1)},\Phi_n^{(2)}, \widetilde{\tau}_n)\}_{n\in\n_0}$ between any two copies of $\Phi$, and writing
$$\Phi_n^{(i)}=(Y_n^{(i)},\xi_n^{(i)})\quad\text{for}\quad n\in\n_0,\;i\in\{1,2\},$$
to indicate the coordinates $Y_n^{(i)}$ and $\xi_n^{(i)}$  with values in $Y$ and $I$, respectively, we can consider the two corresponding copies $\Psi^{(1)}$ and $\Psi^{(2)}$ of the process $\Psi$, defined as follows:
$$
\Psi^{(i)}(t):=(Y^{(i)}(t),\xi^{(i)}(t))\quad\text{for}\quad t\geq 0,\;i\in\{1,2\},
\vspace{-0.4cm}
$$
where
$$
Y^{(i)}(t):=S_{\xi_n^{(i)}}\left(t-\widetilde{\tau}_n,Y_n^{(i)}\right),\;\;\xi^{(i)}(t)=\xi_n^{(i)}\quad\text{whenever}\quad t\in\left[\widetilde{\tau}_n,\widetilde{\tau}_{n+1}\right),\; n\in\n_0,\;i\in\{1,2\}.
$$

Our aim now is to show that any two copies of the process $\Psi$, defined as above on the path space of $\widehat{\Phi}$, get closer to each other on average at an exponential rate, whenever $\widehat{\Phi}$ satisfies \eqref{e:coup_prop_v} and a Lipschitz type condition is imposed on the semiflows $S_i$. This will be a crucial step in deriving the exponential ergodicity of the semigroup $\{P_t\}_{t\in\mathbb{R}_+}$. The proof of this result, given below, is inspired by ideas developed in \cite{b:cloez_hairer}.

\begin{lem}\label{lem:b}
Let $\mu_1,\mu_2\in\mathcal{M}_{prob}(X)$, and suppose that the transition law $\widehat{P}$ of the chain $\widehat{\Phi}$, satisfying \eqref{def:coup}, can be constructed so that
\begin{equation}
\label{e:coup_prop}
\widehat{\ew}_{(\mu_1,\mu_2)}\left[\bar{\rho}_{X,c}\left(\Phi_n^{(1)},\Phi_n^{(2)}\right)\right]\leq C(\mu_1,\mu_2)q^n\quad\text{for all}\quad n\in\mathbb{N},
\end{equation}
with certain constants \hbox{$q\in (0,1)$} and $C(\mu_1,\mu_2)<\infty$. Furthermore, assume that there exist $L<\infty$ and $\alpha\in(-\infty,\lambda)$ such that
\begin{equation}
\label{e:c}
\rho_Y(S_i(t,u), S_i(t,v))\leq Le^{\alpha t}\rho_Y(u,v)\;\;\;\mbox{for any}\;\;\;u,v\in Y,\;i\in I,\;t\geq 0.
\end{equation}
Then there exist constants $\gamma>0$ and $\bar{C}(\mu_1,\mu_2)<\infty$ for which
\begin{equation}
\label{erg_pt}
\widehat{\ew}_{(\mu_1,\mu_2)}\left[\bar{\rho}_{X,c}\left(\Psi^{(1)}(t),\Psi^{(2)}(t)\right)\right] \leq \bar{C}(\mu_1,\mu_2)\,e^{-\gamma t}\quad\text{for all}\quad t\geq 0.
\end{equation}
Moreover, if $q$ does not depend on the measures $\mu_1,\mu_2$, then $\gamma$ can be chosen so that it does not depend on them too.
\end{lem}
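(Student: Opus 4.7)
The plan is to bound $\bar{\rho}_{X,c}(\Psi^{(1)}(t),\Psi^{(2)}(t))$ pointwise on each event $\{\widetilde{\tau}_n\leq t<\widetilde{\tau}_{n+1}\}$ in terms of $D_n:=\bar{\rho}_{X,c}(\Phi_n^{(1)},\Phi_n^{(2)})$ together with an exponential prefactor coming from \eqref{e:c}, and then to sum over $n$ using the tower property, the tail probability $\widehat{\pr}(\widetilde{\tau}_{n+1}>t\mid\widehat{\Phi}_n)=e^{-\lambda(t-\widetilde{\tau}_n)}$ on $\{\widetilde{\tau}_n\leq t\}$ (immediate from \eqref{def:coup}), and the geometric decay of $\widehat{\ew}_{(\mu_1,\mu_2)}[D_n]$ provided by \eqref{e:coup_prop}.

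The first step is to prove the pointwise estimate
\begin{equation*}
\bar{\rho}_{X,c}(\Psi^{(1)}(t),\Psi^{(2)}(t))\,\mathbbm{1}_{\{\widetilde{\tau}_n\leq t<\widetilde{\tau}_{n+1}\}}\leq K\,e^{\alpha^+(t-\widetilde{\tau}_n)}D_n\,\mathbbm{1}_{\{\widetilde{\tau}_n\leq t<\widetilde{\tau}_{n+1}\}},
\end{equation*}
with $\alpha^+:=\max(\alpha,0)$ and $K:=L\vee 1\vee(1/c)$. This follows by a case split on the discrete coordinates: when $\xi_n^{(1)}=\xi_n^{(2)}$, the index contribution to $\rho_{X,c}$ vanishes, \eqref{e:c} yields $\rho_Y(Y^{(1)}(t),Y^{(2)}(t))\leq Le^{\alpha(t-\widetilde{\tau}_n)}\rho_Y(Y_n^{(1)},Y_n^{(2)})$, and the elementary inequality $(ax)\wedge 1\leq(a\vee 1)(x\wedge 1)$ gives the bound with prefactor $Le^{\alpha(t-\widetilde{\tau}_n)}\vee 1$; when $\xi_n^{(1)}\neq\xi_n^{(2)}$ one has $D_n\geq c\wedge 1$ while $\bar{\rho}_{X,c}(\Psi^{(1)}(t),\Psi^{(2)}(t))\leq 1$, which accounts for the factor $1/c$ in $K$.

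Taking $\widehat{\ew}_{(\mu_1,\mu_2)}$, summing over $n\geq 0$, and applying the tower property conditional on $\sigma(\widehat{\Phi}_0,\ldots,\widehat{\Phi}_n)$ converts $\mathbbm{1}_{\{\widetilde{\tau}_{n+1}>t\}}$ into $e^{-\lambda(t-\widetilde{\tau}_n)}$ on $\{\widetilde{\tau}_n\leq t\}$, producing the bound $K\sum_{n\geq 0}\widehat{\ew}_{(\mu_1,\mu_2)}[D_n\mathbbm{1}_{\{\widetilde{\tau}_n\leq t\}}e^{-(\lambda-\alpha^+)(t-\widetilde{\tau}_n)}]$. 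Since the time marginal of $\widehat{P}$ in \eqref{def:coup} depends only on the previous time coordinate, it is natural (and consistent with the marginal constraints) to construct $\widehat{P}$ so that $(\widetilde{\tau}_n)_n$ is independent of $(\Phi_n^{(1)},\Phi_n^{(2)})_n$; the $n$th summand then factorizes and is bounded by $C(\mu_1,\mu_2)q^n$ times an integral against the Erlang density of $\widetilde{\tau}_n$. Interchanging $\sum_{n\geq 1}q^n\lambda^n s^{n-1}/(n-1)!=\lambda q\,e^{\lambda q s}$ with the integral over $s\in[0,t]$ collapses the total sum to a constant multiple of $|e^{-\lambda(1-q)t}-e^{-(\lambda-\alpha^+)t}|$, with both exponents strictly positive thanks to the hypothesis $\alpha<\lambda$; together with the $n=0$ contribution $\widehat{\ew}[D_0]e^{-(\lambda-\alpha^+)t}\leq e^{-(\lambda-\alpha^+)t}$, this yields \eqref{erg_pt} with $\gamma:=\min(\lambda(1-q),\lambda-\alpha^+)$, a rate depending only on $q,\alpha,\lambda$. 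The main technical hurdle is the pointwise estimate, which must absorb into one compact inequality both the possible expansion of the semiflows (the $Le^{\alpha t}$ factor when $\alpha>0$) and the size of $c$ (when the discrete coordinates differ); once that is in place, the summation is a direct calculation, with $\alpha<\lambda$ being exactly what guarantees positivity of the decay rate.
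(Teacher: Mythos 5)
Your overall plan mirrors the paper's: estimate $\bar{\rho}_{X,c}(\Psi^{(1)}(t),\Psi^{(2)}(t))$ on each slab $\{\widetilde{\tau}_n\leq t<\widetilde{\tau}_{n+1}\}$, use the semiflow Lipschitz bound when the discrete coordinates agree and the trivial bound $\bar{\rho}_{X,c}\leq 1$ when they do not, convert $\mathbbm{1}_{\{\widetilde{\tau}_{n+1}>t\}}$ into $e^{-\lambda(t-\widetilde{\tau}_n)}$ by conditioning, and sum against the Erlang density. Your pointwise estimate and the case split are fine, and your per-$n$ treatment of the index mismatch (using $D_n\geq c\wedge 1$ when $\xi_n^{(1)}\neq\xi_n^{(2)}$) is in fact a cleaner alternative to the paper's device of introducing the coupling time $\kappa$ and replacing $\xi^{(2)}$ by a modified copy.

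However, there is a genuine gap at the step where you factor $\widehat{\ew}_{(\mu_1,\mu_2)}\bigl[D_n\,\mathbbm{1}_{\{\widetilde{\tau}_n\leq t\}}e^{-(\lambda-\alpha^+)(t-\widetilde{\tau}_n)}\bigr]$ into a product of expectations. You justify this by declaring that $\widehat{P}$ ``can be constructed'' so that $(\widetilde{\tau}_n)_n$ is independent of $(\Phi_n^{(1)},\Phi_n^{(2)})_n$. That is not a modification you are free to make: the lemma fixes a coupling $\widehat{P}$ satisfying \eqref{def:coup} and \eqref{e:coup_prop}, and the conclusion \eqref{erg_pt} is stated for the processes $\Psi^{(i)}$ built from \emph{that} coupling. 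In fact, the coupling the paper eventually constructs (the kernel $\bar{Q}_P$ in \eqref{def:q_bar}) has the time increment $h$ enter the transition of the spatial coordinates through $S_{i_k}(h,y_k)$, so there the time chain is genuinely correlated with the spatial chain; imposing independence would in general destroy the contraction estimate \eqref{e:coup_prop} that the whole argument depends on. The fix is exactly what the paper does: use $D_n\leq D_n^{1/2}$ (since $D_n\leq 1$) and then apply the Cauchy--Schwarz inequality to separate the factor containing $\widetilde{\tau}_n$ from the factor containing $D_n$, after which \eqref{e:coup_prop} and the Erlang-density computation can be applied to each factor. With that replacement (and a corresponding halving of the exponent in the decay rate), the rest of your computation — the interchange of sum and integral and the resulting exponential rate $\gamma$ depending only on $q$, $\alpha$ and $\lambda$ — goes through.
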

\begin{proof}
Let $\kappa$ be the coupling time for $\left\{\left (\xi_n^{(1)},\xi_n^{(2)}\right)\right\}_{n\in\n_0}$, that is,
$$\kappa:=\inf\{n\in\n_0:\,\xi_n^{(1)}=\xi_n^{(2)}\}.$$
From \eqref{e:coup_prop} it then follows that, for every $n\in\n$,
\begin{align}
\begin{split}
\label{mp:1}
\widehat{\pr}_{(\mu_1,\mu_2)}(\kappa>n)
&\leq \widehat{\pr}_{(\mu_1,\mu_2)}\left(\xi_n^{(1)}\neq \xi_n^{(2)}\right)
=\widehat{\ew}_{(\mu_1,\mu_2)}\left[\mathbbm{1}_{\left\{\xi_n^{(1)}\neq\xi_n^{(2)}\right\}}\right]\\
&=\widehat{\ew}_{(\mu_1,\mu_2)}\left[\dd(\xi_n^{(1)},\xi_n^{(2)})\right]
\leq \widehat{\ew}_{(\mu_1,\mu_2)}\left[\bar{\rho}_{X,c}(\Phi_n^{(1)},\Phi_n^{(2)}) \right]\\
&\leq C(\mu_1,\mu_2)q^n,
\end{split}
\end{align}
which, in particular, shows that $\pr_{(\mu_1,\mu_2)}(\kappa<\infty)=1$. 

In what follows, the processes $\{\xi_n^{(2)}\}_{n\in\n_0}$ and $\{\Psi^{(2)}(t)\}_{t\in\mathbb{R}_+}$ will be identified with their copies $\{\wxi_n^{(2)}\}_{n\in\n_0}$ and $\{\wPsi^{(2)}(t)\}_{t\in\mathbb{R}_+}$, respectively, defined as follows:
\begin{gather*}
\wxi_n^{(2)}:=
\begin{cases}
\xi_n^{(2)} &\mbox{if}\quad n\leq\kappa,\\
\xi_n^{(1)} &\mbox{if}\quad n>\kappa,
\end{cases}\\[0.2cm]
\wPsi^{(2)}(t):=\left(S_{\wxi_n^{(2)}}\left(t-\otau_n,Y_n^{(2)}\right),\wxi_n^{(2)}\right),\quad\text{whenever}\quad t\in\left[\otau_n,\otau_{n+1}\right)\quad\text{for}\quad n\in\n_0.
\end{gather*}
By $(\widetilde{N}(s))_{s\in\mathbb{R}_+}$ we will denote the Poisson process given by
$$\widetilde{N}(s):=\max\{n\in\n_0:\,\otau_n\leq s\}\quad\text{for}\quad s\geq 0.$$
Moreover, we will write $\{\mathcal{F}_n\}_{n\in\n_0}$ for the natural filtration of $\widehat{\Phi}=\left\{\left(\Phi_n^{(1)},\Phi_n^{(2)},\otau_n\right)\right\}_{n\in\n_0}$.

Let $n\in\n_0$ and $t\geq 0$ be arbitrary. Keeping in mind that $\bar{\rho}_{X,c}\leq 1$, we can write
\begin{align}
\label{mp:2}
&\widehat{\ew}_{(\mu_1,\mu_2)}\left[\bar{\rho}_{X,c}\left(\Psi^{(1)}(t),\Psi^{(2)}(t)\right)\mathbbm{1}_{\{\widetilde{N}(t)=n\}}\,|\,\mathcal{F}_n  \right]\nonumber\\
&\leq \widehat{\ew}_{(\mu_1,\mu_2)}\left[\bar{\rho}_{X,c}\left(\Psi^{(1)}(t),\Psi^{(2)}(t)\right)^{1/2}\mathbbm{1}_{\{\widetilde{N}(t)=n\}}\,|\,\mathcal{F}_n  \right]\\
&\leq  \widehat{\ew}_{(\mu_1,\mu_2)}\left[\bar{\rho}_{X,c}\left(\Psi^{(1)}(t),\Psi^{(2)}(t)\right)^{1/2}\mathbbm{1}_{\{\widetilde{N}(t)=n\}}\mathbbm{1}_{\{\kappa\leq n\}}\,|\,\mathcal{F}_n  \right]+\widehat{\ew}_{(\mu_1,\mu_2)}\left[\mathbbm{1}_{\{\widetilde{N}(t)=n\}}\mathbbm{1}_{\{\kappa>n\}}\,|\,\mathcal{F}_n  \right].\nonumber
\end{align}

Defining
$$\bar{L}=\max\{L,1\}\quad\text{and}\quad \bar{\alpha}=\max\{\alpha,0\}.$$
we see that \eqref{e:c} gives
\begin{equation}
\label{e:cc}
\rho_Y(S_i(t,u), S_i(t,v))\wedge 1\leq \bar{L}e^{\bar{\alpha} t}\left[\rho_Y(u,v)\wedge 1\right]\;\;\;\mbox{for any}\;\;\;u,v\in Y,\;i\in I,\;t\geq 0.
\end{equation}

Taking into account that $\{\widetilde{N}(t)=n\}=\{\otau_n\leq t<\otau_{n+1}\}$, and that $\xi_n^{(1)}=\xi_n^{(2)}$ whenever $n\geq \kappa$ (due the adopted identification $\xi^{(2)}=\widetilde{\xi}^{(2)}$), we may apply \eqref{e:cc} to estimate  the first term on the right-hand side of \eqref{mp:2} as follows:
\begin{align}
\label{mp:3}
&\widehat{\ew}_{(\mu_1,\mu_2)}\left[\bar{\rho}_{X,c}\left(\Psi^{(1)}(t),\Psi^{(2)}(t)\right)^{1/2}\mathbbm{1}_{\{\widetilde{N}(t)=n\}}\mathbbm{1}_{\{\kappa\leq n\}}\,|\,\mathcal{F}_n  \right]\nonumber\\
&=\mathbbm{1}_{\{\otau_n\leq t\}} \mathbbm{1}_{\{\kappa\leq n\}}\left[\rho_Y\left(S_{\xi_n^{(1)}}\left(t-\otau_n,Y_n^{(1)}\right),S_{\xi_n^{(2)}}\left(t-\otau_n,Y_n^{(2)}\right)\right)\wedge 1\right]^{1/2}\widehat{\ew}_{(\mu_1,\mu_2)}\left[\mathbbm{1}_{\{\otau_{n+1}>t\}}\,|\,\mathcal{F}_n  \right]\nonumber\\
&\leq \mathbbm{1}_{\{\otau_n\leq t\}} \mathbbm{1}_{\{\kappa\leq n\}} \bar{L}^{1/2}\,e^{\bar{\alpha} (t-\otau_n)/2}\left[\rho_Y\left(Y_n^{(1)},Y_n^{(2)}\right)\wedge 1\right]^{1/2}\widehat{\pr}_{(\mu_1,\mu_2)}(\otau_{n+1}>t\,|\,\mathcal{F}_n)\nonumber\\
&\leq \mathbbm{1}_{\{\otau_n\leq t\}} \mathbbm{1}_{\{\kappa\leq n\}} \bar{L}^{1/2}\,e^{\bar{\alpha} (t-\otau_n)/2}\,\bar{\rho}_{X,c}\left(\Phi_n^{(1)},\Phi_n^{(2)}\right)^{1/2}\widehat{\pr}_{(\mu_1,\mu_2)}(\otau_{n+1}>t\,|\,\mathcal{F}_n).
\end{align}
Since, according to \eqref{e:tau},
$$\widehat{\pr}_{(\mu_1,\mu_2)}(\otau_{n+1}>t\,|\,\mathcal{F}_{n})=\widehat{\pr}_{(\mu_1,\mu_2)}(\otau_{n+1}>t\,|\,\tau_{n})=e^{-\lambda(t-\otau_{n})}\quad\text{on}\quad\{\otau_{n}\leq t\},$$
it follows that
\begin{align}
\begin{split}
\label{mp:4}
\widehat{\ew}_{(\mu_1,\mu_2)}&\left[\bar{\rho}_{X,c}\left(\Psi^{(1)}(t),\Psi^{(2)}(t)\right)^{1/2}\mathbbm{1}_{\{\widetilde{N}(t)=n\}}\mathbbm{1}_{\{\kappa\leq n\}}\,|\,\mathcal{F}_n  \right]\\
&\leq \mathbbm{1}_{\{\otau_n\leq t\}} 
\mathbbm{1}_{\{\kappa\leq n\}} \bar{L}^{1/2}\,e^{-(\lambda-\bar{\alpha}/2)t}e^{(\lambda-\bar{\alpha}/2)\otau_n}\bar{\rho}_{X,c}\left(\Phi_n^{(1)},\Phi_n^{(2)}\right)^{1/2}.
\end{split}
\end{align}

Consequently, due to \eqref{mp:2} and \eqref{mp:4}, we obtain
\begin{align}
\begin{split}
\label{mp:5}
\widehat{\ew}_{(\mu_1,\mu_2)}&\left[\bar{\rho}_{X,c}\left(\Psi^{(1)}(t),\Psi^{(2)}(t)\right)\mathbbm{1}_{\{\widetilde{N}(t)=n\}}\,|\,\mathcal{F}_n  \right]\\
&\leq \mathbbm{1}_{\{\otau_n\leq t\}} 
\mathbbm{1}_{\{\kappa\leq n\}} \bar{L}^{1/2}e^{-(\lambda-\bar{\alpha}/2)t}e^{(\lambda-\bar{\alpha}/2)\otau_n}\bar{\rho}_{X,c}\left(\Phi_n^{(1)},\Phi_n^{(2)}\right)^{1/2}\\
&\quad+\widehat{\ew}_{(\mu_1,\mu_2)}\left[\mathbbm{1}_{\{\widetilde{N}(t)=n\}}\mathbbm{1}_{\{\kappa>n\}}\,|\,\mathcal{F}_n  \right].
\end{split}
\end{align}

Taking the expectation of both sides of the last inequality and using the Cauchy--Schwarz inequality gives
\begin{align}
\begin{split}
\label{mp:6}
&\widehat{\ew}_{(\mu_1,\mu_2)}\left[\bar{\rho}_{X,c}\left(\Psi^{(1)}(t),\Psi^{(2)}(t)\right)\mathbbm{1}_{\{\widetilde{N}(t)=n\}}\right]\\
&\leq \bar{L}^{1/2}\,e^{-(\lambda-\bar{\alpha}/2)t}\,\widehat{\ew}_{(\mu_1,\mu_2)}\left[\mathbbm{1}_{\{\otau_n\leq t\}}e^{(\lambda-\bar{\alpha}/2)\otau_n} \bar{\rho}_{X,c}\left(\Phi_n^{(1)},\Phi_n^{(2)}\right)^{1/2}\right]\\
&\quad+\widehat{\ew}_{(\mu_1,\mu_2)}\left[\mathbbm{1}_{\{\widetilde{N}(t)=n\}}\mathbbm{1}_{\{\kappa>n\}} \right]\\
&\leq \bar{L}^{1/2}\,e^{-(\lambda-\bar{\alpha}/2)t}\,\left(\widehat{\ew}_{(\mu_1,\mu_2)}\left[\mathbbm{1}_{\{\otau_n\leq t\}}e^{(2\lambda-\bar{\alpha})\otau_n}\right]\right)^{1/2}\left(\widehat{\ew}_{(\mu_1,\mu_2)}\left[\bar{\rho}_{X,c}\left(\Phi_n^{(1)},\Phi_n^{(2)}\right)\right]\right)^{1/2}\\
&\,\quad+\widehat{\pr}_{(\mu_1,\mu_2)}(\widetilde{N}(t)=n)^{1/2}\,\widehat{\pr}_{(\mu_1,\mu_2)}(\kappa>n)^{1/2}.
\end{split}
\end{align}

What is left is to estimate the right-hand side of \eqref{mp:6}. To do this, we first observe that, for any $\lambda_0>0$,
\begin{align}
\begin{split}
\label{mp:7}
\widehat{\ew}_{(\mu_1,\mu_2)}\left[\mathbbm{1}_{\{\otau_n\leq t\}}e^{(2\lambda-\bar{\alpha})\otau_n}\right]&=\int_0^t e^{(2\lambda-\bar{\alpha})s} e^{-\lambda s} \frac{\lambda^n s^{n-1}}{(n-1)!}\,ds\\
&=\lambda_0\left(\frac{\lambda}{\lambda_0}\right)^n\int_0^t e^{(\lambda-\bar{\alpha})s} \frac{(\lambda_0 s)^{n-1}}{(n-1)!}\,ds\\
&\leq \lambda_0\left(\frac{\lambda}{\lambda_0}\right)^n\int_0^t e^{(\lambda-\bar{\alpha})s} e^{\lambda_0 s}\,ds=\lambda_0\left(\frac{\lambda}{\lambda_0}\right)^n\int_0^t e^{(\lambda+\lambda_0-\bar{\alpha})s} \,ds\\
&\leq \frac{\lambda_0}{\lambda+\lambda_0-\bar{\alpha}}\left(\frac{\lambda}{\lambda_0}\right)^n e^{(\lambda+\lambda_0-\bar{\alpha})t},
\end{split}
\end{align}
where the first equality follows from the fact that $\otau_n$ has the Erlang distribution with rate~$\lambda$.
Consequently, applying \eqref{mp:7}, together with hypothesis \eqref{e:coup_prop}, we see that
\begin{align}
\begin{split}
\label{mp:8}
&\bar{L}^{1/2}\,e^{-(\lambda-\bar{\alpha}/2)t}\,\left(\widehat{\ew}_{(\mu_1,\mu_2)}\left[\mathbbm{1}_{\{\otau_n\leq t\}}e^{(2\lambda-\bar{\alpha})\otau_n}\right]\right)^{1/2}\left(\widehat{\ew}_{(\mu_1,\mu_2)}\left[\bar{\rho}_{X,c}\left(\Phi_n^{(1)},\Phi_n^{(2)}\right)\right]\right)^{1/2}\\
&\leq C(\mu_1,\mu_2)^{1/2}\left(\frac{\bar{L}\lambda_0}{\lambda+\lambda_0-\bar{\alpha}}\right)^{1/2}\left(\frac{q\lambda}{\lambda_0}\right)^{n/2}e^{-(\lambda-\lambda_0)t/2}.
\end{split}
\end{align}
Moreover, from \eqref{mp:1} it follows that, for any $\bar{q}>0$,
\begin{align}
\begin{split}
\label{mp:9}
\widehat{\pr}_{(\mu_1,\mu_2)}(\widetilde{N}(t)=n)^{1/2}\,\widehat{\pr}_{(\mu_1,\mu_2)}(\kappa>n)^{1/2}
&\leq \left(e^{-\lambda t}\frac{(\lambda t)^n}{n!}\cdot C(\mu_1,\mu_2)q^n \right)^{1/2}\\
&=C(\mu_1,\mu_2)^{1/2}e^{-\lambda t/2}\left(\frac{(\lambda q\bar{q}^{\,-1}t)^n}{n!}\right)^{1/2}\bar{q}^{\,n/2}\\
&\leq C(\mu_1,\mu_2)^{1/2} e^{-\lambda t/2}e^{\lambda q\bar{q}^{\,-1}t/2}\,\bar{q}^{\,n/2}\\
&= C(\mu_1,\mu_2)^{1/2} e^{-\lambda(1-q\bar{q}^{\,-1})t/2}\,\bar{q}^{\,n/2}.
\end{split}
\end{align}

Let us now take $\bar{q}\in(q,1)$ and choose $\lambda_0\in (0,\lambda)$ so that $q\lambda\lambda_0^{-1}<1$. This choice guarantees that
$$\gamma:=\min\left\{\frac{\lambda-\lambda_0}{2}, \frac{\lambda(1-q\bar{q}^{\,-1})}{2} \right\}>0\quad\text{and}\quad r:=\max\left\{\left(\frac{q\lambda}{\lambda_0}\right)^{1/2},\, \bar{q}^{1/2} \right\}\in (0,1).$$
From \eqref{mp:6}, \eqref{mp:8} and \eqref{mp:9} we can now conclude that
\begin{align*}
\widehat{\ew}_{(\mu_1,\mu_2)}\left[\bar{\rho}_{X,c}\left(\Psi^{(1)}(t),\Psi^{(2)}(t)\right)\mathbbm{1}_{\{\widetilde{N}(t)=n\}}\right]\leq C(\mu_1,\mu_2)^{1/2}\left[\left(\frac{\bar{L}\lambda_0}{\lambda+\lambda_0-\alpha}\right)^{1/2}+1\right] r^n e^{-\gamma t}.
\end{align*}
Finally, defining $\widetilde{C}(\mu_1,\mu_2):=C(\mu_1,\mu_2)^{1/2}[(\bar{L}\lambda_0)^{1/2}(\lambda+\lambda_0-\alpha)^{-1/2} +1]$, we infer that
$$\widehat{\ew}_{(\mu_1,\mu_2)}\left[\bar{\rho}_{X,c}\left(\Psi^{(1)}(t),\Psi^{(2)}(t)\right)\right]\leq\sum_{n=0}^{\infty}\widetilde{C}(\mu_1,\mu_2)r^n e^{-\gamma t}=\frac{\widetilde{C}(\mu_1,\mu_2)}{1-r}e^{-\gamma t},$$
whence \eqref{erg_pt} holds with $\bar{C}(\mu_1,\mu_2):=\widetilde{C}(\mu_1,\mu_2)(1-r)^{-1}$. The proof is now complete.

\end{proof}

Lemmas \ref{lem:a}, \ref{lem:b} and Theorem \ref{thm:cor} enable us to prove the main result of this section, which reads as follows:
\begin{thm}\label{thm:main1}
Let $P$ be the transition operator of the chain $\Phi$, given by \eqref{def:P}, and let $\{P_t\}_{t\in\mathbb{R}_+}$ denote the transition semigroup of the process $\Psi$, defined by \eqref{def:pdmp}. Further, suppose that the following conditions are fulfilled:
\begin{enumerate}[label=\textnormal{(A\arabic*)}]
\item\label{cnd:a1}There exist $y^*\in Y$ and constants $\tilde{a}>0$, $\tilde{b}\geq 0$ for which $J$, occurring in \eqref{def:P_bar}, satisfies
\begin{equation}
\label{e:lap_p}
\p \rho_Y(\cdot,y^*)\leq \tilde{a}\rho_Y(\cdot, y^*)+\tilde{b}.
\end{equation}
\item\label{cnd:a2}We have
\begin{equation}
\label{e:gv}
\int_0^{\infty} e^{-\lambda t} \rho_Y(S_i(t,y^*),y^*)\,dt<\infty\quad\text{for any}\quad i\in I.
\end{equation}
\item\label{cnd:a3}There exist $L>0$ and $\alpha\in\mathbb{R}$ satisfying $\tilde{a}L+\alpha\lambda^{-1}<1$, for which \eqref{e:c} holds, that is,
$$\rho_Y(S_i(t,u), S_i(t,v))\leq Le^{\alpha t}\rho_Y(u,v)\;\;\;\mbox{for}\;\;\;u,v\in Y,\;i\in I,\;t\geq 0.$$ 
\end{enumerate}
\vspace{-0.5cm}
\begin{enumerate}[label=\textnormal{(C)}]
\item\label{cnd:c} There exists an augmented coupling $\widehat{\Phi}=\{(\Phi_n^{(1)},\Phi_n^{(2)}, \widetilde{\tau}_n)\}_{n\in\n_0}$ of the chain $\Phi$, with transition law $\widehat{P}$ satisfying \eqref{def:coup}, such that \eqref{e:coup_prop_v} holds with certain constants \hbox{$q\in (0,1)$}, $C_0<\infty$ and the function $V$ of the form
\begin{equation}
\label{V_rho}
V(y,i):=\rho_Y(y,y^*)\quad\text{for}\quad (y,i)\in X.\vspace{-0.3cm}
\end{equation}
\end{enumerate}
Then, if $J$ is Feller, \hbox{the~operator}~$P$ is \hbox{$V$-exponentially ergodic} in~$d_{FM,c}$  (in the sense of \hbox{Definition} \ref{def:erg_p}). Moreover, if \eqref{e:j_cont} holds, then the semigroup $\{P_t\}_{t\in\mathbb{R}_+}$ is \hbox{$V$-exponentially} ergodic in $d_{FM,c}$ (in the sense of~Definition~\ref{def:erg_pt}).
\end{thm}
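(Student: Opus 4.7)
The plan is to combine a Foster--Lyapunov estimate derived from \ref{cnd:a1}--\ref{cnd:a3} with the abstract results Lemma~\ref{lem:a}, Lemma~\ref{lem:b} and Theorem~\ref{thm:cor}. The key preparatory step is to verify the drift condition \eqref{e:lyapunov} for $V(y,i):=\rho_Y(y,y^*)$. Writing $PV(y,i)=\int_0^{\infty}\lambda e^{-\lambda t}\p\rho_Y(\cdot,y^*)(S_i(t,y))\,dt$ and applying \ref{cnd:a1} followed by the triangle inequality and \ref{cnd:a3} gives
\[
PV(y,i)\leq \frac{\tilde{a}L\lambda}{\lambda-\alpha}V(y,i)+\tilde{a}C^*+\tilde{b},
\]
where $C^*:=\max_{i\in I}\int_0^{\infty}\lambda e^{-\lambda t}\rho_Y(S_i(t,y^*),y^*)\,dt<\infty$ by \ref{cnd:a2} and finiteness of $I$. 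The constraint $\tilde{a}L+\alpha/\lambda<1$ from \ref{cnd:a3} is exactly what forces $a:=\tilde{a}L\lambda/(\lambda-\alpha)<1$; getting this bookkeeping right is the main technical checkpoint of the whole argument. Once \eqref{e:lyapunov} is in hand, the $V$-exponential ergodicity of $P$ in $d_{FM,c}$ follows directly from Lemma~\ref{lem:a}, since $P$ is Feller whenever $J$ is (Remark~\ref{rem:feller}) and \ref{cnd:c} supplies the required coupling.

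For the statement about $\{P_t\}_{t\in\mathbb{R}_+}$, under the stronger assumption \eqref{e:j_cont}, I would set $\mu_*^{\Psi}:=\mu_*^{\Phi}G$, which is invariant by Theorem~\ref{thm:cor}(i). An analogue of the Lyapunov computation applied to $G$ (omitting the $J$-step) produces a bound of the form $GV\leq c_1V+c_2$, so integration against $\mu_*^{\Phi}\in\mathcal{M}_{prob}^V(X)$ gives $\mu_*^{\Psi}\in\mathcal{M}_{prob}^V(X)$. To propagate the contraction from $\Phi$ to $\Psi$, fix $\mu\in\mathcal{M}_{prob}^V(X)$ and apply Lemma~\ref{lem:b} with $\mu_1:=\mu$, $\mu_2:=\mu_*^{\Psi}$. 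Integrating \eqref{e:coup_prop_v} against $\mu\otimes\mu_*^{\Psi}$ verifies \eqref{e:coup_prop} with $C(\mu,\mu_*^{\Psi})=C_0(\<V,\mu\>+\<V,\mu_*^{\Psi}\>+1)$, while \ref{cnd:a3} is exactly \eqref{e:c}; because $q$ in \ref{cnd:c} does not depend on the initial measures, Lemma~\ref{lem:b} delivers a uniform rate $\gamma>0$ and the estimate
\[
\widehat{\ew}_{(\mu,\mu_*^{\Psi})}\bigl[\bar{\rho}_{X,c}(\Psi^{(1)}(t),\Psi^{(2)}(t))\bigr]\leq \bar{C}(\mu)e^{-\gamma t}.
\]
By the canonical construction of $\widehat{\Phi}$ and the interpolation~\eqref{def:pdmp}, the marginals of $\Psi^{(1)}$ and $\Psi^{(2)}$ are $\mu P_t$ and $\mu_*^{\Psi}P_t=\mu_*^{\Psi}$, respectively; bounding $|f(\Psi^{(1)}(t))-f(\Psi^{(2)}(t))|\leq\bar{\rho}_{X,c}(\Psi^{(1)}(t),\Psi^{(2)}(t))$ for $f\in\Lip_{b,1}(X)$ (exactly as in the mixing argument inside Lemma~\ref{lem:a}) and taking the supremum yields $d_{FM,c}(\mu P_t,\mu_*^{\Psi})\leq \bar{C}(\mu)e^{-\gamma t}$.

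Uniqueness of $\mu_*^{\Psi}$ I would extract from the correspondence: any invariant probability $\tilde{\mu}$ for $\{P_t\}_{t\in\mathbb{R}_+}$ gives, via Theorem~\ref{thm:cor}(ii), an invariant $\tilde{\mu}W$ for $P$, which must coincide with $\mu_*^{\Phi}$ by the uniqueness clause of Lemma~\ref{lem:a}, so that the second half of Theorem~\ref{thm:cor}(ii) yields $\tilde{\mu}=(\tilde{\mu}W)G=\mu_*^{\Phi}G=\mu_*^{\Psi}$. Besides the Lyapunov calculation, the only delicate point is making sure that the marginals of the coupled processes $\Psi^{(i)}$ really are $\mu_iP_t$, but this is automatic because $\widehat{P}$ has $P$ as both of its $X$-marginals and reproduces the same exponential law for $\widetilde{\tau}$ as the original $\bar{P}$.
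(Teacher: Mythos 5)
Your proposal follows the same architecture as the paper's proof: establish the Foster--Lyapunov bound \eqref{e:lyapunov} for $V(y,i)=\rho_Y(y,y^*)$ from \ref{cnd:a1}--\ref{cnd:a3} (your $a=\tilde{a}L\lambda/(\lambda-\alpha)$ and $b$ match \eqref{e:ab}, and you identify the constraint $\tilde{a}L+\alpha\lambda^{-1}<1 \Leftrightarrow a<1$ correctly); invoke Lemma~\ref{lem:a} together with Remark~\ref{rem:feller} and \ref{cnd:c} for the discrete-time ergodicity; produce $\mu_*^{\Psi}=\mu_*^{\Phi}G$ via Theorem~\ref{thm:cor}(i); and propagate the coupling contraction through Lemma~\ref{lem:b}. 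You actually spell out two points that the paper leaves terse, both correctly: the bound $GV\leq c_1V+c_2$ to justify $\mu_*^{\Psi}\in\mathcal{M}_{prob}^V(X)$, and the uniqueness of $\mu_*^{\Psi}$ via Theorem~\ref{thm:cor}(ii) combined with the uniqueness clause of Lemma~\ref{lem:a}. Applying Lemma~\ref{lem:b} directly to the pair $(\mu,\mu_*^{\Psi})$ rather than first establishing the general contraction \eqref{e:pt_contr} for arbitrary $\mu_1,\mu_2\in\mathcal{M}_{prob}^V(X)$ (as the paper does) and then specializing is a harmless stylistic difference.

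One remark on your closing paragraph: the claim that the identity $\widehat{\ew}_{(\mu_1,\mu_2)}[f(\Psi^{(i)}(t))]=\<f,\mu_iP_t\>$ is \emph{automatic} because ``$\widehat{P}$ has $P$ as both of its $X$-marginals and reproduces the same exponential law for $\widetilde{\tau}$'' is not a valid justification. The three conditions in \eqref{def:coup} constrain only the one-dimensional marginals of $\widehat{P}$, whereas the distribution of the interpolated process $\Psi^{(i)}(t)$ is driven by the \emph{joint} law of $(\Phi_n^{(i)},\widetilde{\tau}_n)$; and in the one-step law $\bar{P}$ the post-jump location $u$ and the increment $h$ are conditionally dependent (the post-jump distribution is $J(S_i(h,y),\cdot)$, which involves $h$). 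Thus the separate marginal conditions do not by themselves force $(\Phi_n^{(i)},\widetilde{\tau}_n)$ to have transition law $\bar{P}((x_i,\cdot),\cdot)$. The paper's own proof makes exactly the same implicit assertion when it writes $\<f,\mu_1P_t-\mu_2P_t\>=\widehat{\ew}_{(\mu_1,\mu_2)}[f(\Psi^{(1)}(t))-f(\Psi^{(2)}(t))]$, so you are not deviating from the paper's logic; but ``automatic from the marginals'' is not the right reason, and a rigorous treatment would require the joint two-coordinate marginal condition $\widehat{P}((x_1,x_2,s),A\times X\times T)=\bar{P}((x_1,s),A\times T)$ (and its symmetric counterpart) rather than the weaker \eqref{def:coup} as stated.
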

\begin{proof}
First of all, note that conditions \ref{cnd:a1}-\ref{cnd:a3} imply that \eqref{e:lyapunov} holds with $V$ given by~\eqref{V_rho} and the constants 
\begin{equation}
\label{e:ab}
a:=\frac{\tilde{a}\lambda L}{\lambda -\alpha}\in(0,1)\quad\text{and}\quad b:=\tilde{a}\lambda\max_{i\in I}\int_0^{\infty} e^{-\lambda t} \rho_Y(S_i(t,y^*),y^*)\,dt+\tilde{b}\geq 0,
\end{equation}
where $\tilde{a}$, $\tilde{b}$ are determined by \ref{cnd:a1}. We are led to this conclusion by the following estimates:
\begin{align*}
PV(y,i)&=\int_0^{\infty}\lambda e^{-\lambda h}J\rho_Y(\cdot,y^*)(S_i(h,y))\,dh
\leq \tilde{a}\int_0^{\infty}\lambda e^{-\lambda h} \rho_Y(S_i(h,y),y^*)\,dh+\tilde{b}\\
&\leq  \tilde{a}\int_0^{\infty}\lambda e^{-\lambda h}\left[\rho_Y(S_i(h,y),S_i(h,y^*))+\rho_Y(S_i(h,y^*),y^*)\right]\,dh+\tilde{b}\\
& \leq  \tilde{a}\int_0^{\infty}\lambda e^{-\lambda h}\left[Le^{\alpha h}\rho_Y(y,y^*)+\rho(S_i(h,y^*),y^*)\right]\,dh+\tilde{b}\\
&= \tilde{a}\lambda L\left(\int_0^{\infty}e^{-(\lambda-\alpha)h} \,dh\right)V(y,i)+\tilde{a}\lambda\int_0^{\infty}e^{-\lambda h} \rho(S_i(h,y^*),y^*)\,dh+\tilde{b}\\
&\leq aV(y,i)+b\quad \text{for any}\quad y\in Y,\;i\in I.
\end{align*}

Consequently, if $J$ is Feller (and thus so is $P$, due to Remark \ref{rem:feller}), then, by virtue of Lemma~\ref{lem:a}, the operator $P$ is exponentially ergodic in $d_{FM,c}$. 

It now remains to prove that $\{P_t\}_{t\in\mathbb{R}_+}$ is also exponentially ergodic, provided that the Feller property of $J$ is strengthened to condition \eqref{e:j_cont}.

Let $\mu_*^{\Phi}\in\mathcal{M}_{prob}^V(X)$ be the unique invariant probability measure of $P$ (which exists by Lemma~\ref{lem:a}). Then, upon assuming \eqref{e:j_cont}, Theorem \ref{thm:cor} guarantees the existence of exactly one invariant probability measure for $\{P_t\}_{t\in\mathbb{R}_+}$, which can be expressed as $\mu_*^{\Psi}:=\mu_*^{\Phi}G$, where $G$ is the Markov operator induced by \eqref{def:g}. Moreover, conditions \ref{cnd:a2} and \ref{cnd:a3} yield that $\mu_*^{\Psi}\in\mathcal{M}_{prob}^V(X)$.

Now, to complete the proof, it suffices to show that there exists $\gamma>0$ such that
\begin{equation}
\label{e:pt_contr}
d_{FM,c}(\mu_1 P_t, \mu_2 P_t)\leq \bar{C}(\mu_1,\mu_2)e^{-\gamma t}\quad\text{for any}\quad t\geq 0, \; \mu_1,\mu_2\in\mathcal{M}_{prob}^V(X),
\end{equation}
where $\bar{C}(\mu_1,\mu_2)$ is a constant depending on $\mu_1$ and $\mu_2$.

From hypothesis \ref{cnd:c} and \eqref{e:p_m} it follows that
$$\widehat{\ew}_{(\mu_1,\mu_2)}\left[\bar{\rho}_{X,c}(\Phi_n^{(1)},\Phi_n^{(2)})\right]\leq C(\mu_1,\mu_2)q^n\quad\text{for any}\quad n\in\n,\;\mu_1,\mu_2\in\mathcal{M}_{prob}(X),$$
where 
$$C(\mu_1,\mu_2):=C_0(\<V,\mu_1\>+\<V,\mu_2\>+1)\quad\text{with some}\quad C_0\in\mathbb{R},$$ and, clearly, $C(\mu_1,\mu_2)<\infty$ for $\mu_1,\mu_2\in\mathcal{M}_1^V(X)$. In view of Lemma~\ref{lem:b}, this, together with condition \ref{cnd:a3}, implies the existence of $\gamma>0$ such that
$$
\widehat{\ew}_{(\mu_1,\mu_2)}\left[\bar{\rho}_{X,c}\left(\Psi^{(1)}(t),\Psi^{(2)}(t)\right)\right] \leq \bar{C}(\mu_1,\mu_2)\,e^{-\gamma t}\quad\text{for all}\quad t\geq 0,\;\mu_1,\mu_2\in\mathcal{M}_{prob}^V(X),
$$
where $\bar{C}(\mu_1,\mu_2)$ is a constant depending on $C(\mu_1,\mu_2)$. Now, it suffices to observe that, for any $\mu_1,\mu_2\in\mathcal{M}_1^V(X)$ and $f\in\Lip_{b,1}(X)$,
\begin{align*}
\left|\<f, \mu_1 P_t-\mu_2 P_t\>\right|&=\left|\widehat{\ew}_{(\mu_1,\mu_2)} \left[f\left(\Psi^{(1)}(t)\right)- f\left(\Psi^{(2)}(t)\right)\right] \right|\leq\widehat{\ew}_{(\mu_1,\mu_2)} \left| f\left(\Psi^{(1)}(t)\right)- f\left(\Psi^{(2)}(t)\right) \right|\\
&\leq \widehat{\ew}_{(\mu_1,\mu_2)}\left[\bar{\rho}_{X,c}\left(\Psi^{(1)}(t),\Psi^{(2)}(t) \right)\right]\leq \bar{C}(\mu_1,\mu_2)e^{-\gamma t},
\end{align*}
which obviously assures \eqref{e:pt_contr}, and thus ends the proof.
\end{proof}

\begin{rem}
It is worth noting that condition \eqref{e:pt_contr} is achieved by using only \ref{cnd:c} and~\eqref{e:c}.
\end{rem}


\section{Sufficient conditions for exponential ergodicity in~$d_{FM,c}$}\label{sec:main}
This section is intended to provide some verifiable, sufficient conditions for the existence of a suitable coupling of $\Phi$, for which condition \ref{cnd:c} of Theorem \ref{thm:main1} is satisfied, which, in turn, will enable us to state a verifiable criterion for the exponential ergodicity of $P$ and $\{P_t\}_{t\in\mathbb{R}_+}$. Obviously, such conditions should refer to the semiflows $S_i$, the probabilities $\pi_{ij}$ and the kernel $J$, appearing in the definition of $\bar{P}$, given in $\eqref{def:P_bar}$. 

To ensure the existence of an appropriate coupling, we need to assume that hypotheses \ref{cnd:a1}-\ref{cnd:a3} of Theorem \ref{thm:main1} hold and, additionally, employ the following conditions:
\begin{enumerate}[label=\textnormal{(A\arabic*)}, start=4]
\item\label{cnd:a4}There exist a Lebesgue measurable function $\varphi:\mathbb{R}_+\to\mathbb{R}_+$ satisfying \vspace{-0.1cm}
$$
K_{\varphi}:=\int_0^{\infty}e^{-\lambda t} \varphi(t)\,dt<\infty
\vspace{-0.1cm}
$$ 
and a function $\mathcal{L}:Y\to\mathbb{R}_+$ that is bounded on bounded sets such that
$$\rho_Y(S_i(t,y),S_j(t,y))\leq \varphi(t)\mathcal{L}(y)\quad\text{for any}\quad t\geq 0,\;y\in Y,\;i,j\in I.$$
\item\label{cnd:a5}There exists $j_0\in I$ such that $\min_{i\in I}\pi_{ij_0}>0$.
\item\label{cnd:a6}There exists a substochastic kernel $Q_J: Y^2\times\mathcal{B}(Y^2)\to [0,1]$ such that
\begin{equation}
\label{e:Q_J}
Q_J((y_1,y_2), B\times Y)\leq J(y_1,B)\quad\text{and}\quad Q_J((y_1,y_2), Y\times B)\leq J(y_2,B)
\end{equation}
for any $y_1,y_2\in Y$ and $B\in\mathcal{B}(Y)$, which enjoys the following properties:
\begin{equation}
\label{e:q1} \int_{Y^2}\rho_Y(u,v)\,Q_J((y_1,y_2),du\times dv)\leq \tilde{a}\rho_Y(y_1,y_2) \quad \text{for any}\quad y_1,y_2\in Y,
\end{equation}
\begin{equation}
\label{e:q2} \inf_{(y_1,y_2)\in Y^2} Q_J((y_1,y_2), \widetilde{U}\left(\tilde{a}\rho_Y(y_1,y_2)\right)\geq \eta\quad\text{for some}\quad \eta>0,\vspace{-0.1cm}
\end{equation}
where
\begin{equation}
\label{def:u_tilde}
\widetilde{U}(r):=\{(u,v)\in Y^2:\,\rho_Y(u,v)\leq r\}\quad\text{for} \quad r>0,
\end{equation}
and $\tilde{a}$ is the constant for which \ref{cnd:a1} holds, as well as there exists $\tilde{l}>0$ such that
\begin{equation}
\label{e:q3} Q_J((y_1,y_2), Y^2)>1-\tilde{l}\rho_Y(y_1,y_2)\quad\text{for any}\;\; y_1,y_2\in Y.
\end{equation}
\end{enumerate}\vspace{-0.2cm}

Furthermore, we will assume that the constant $c$, appearing in \eqref{def:rho_c}, is sufficiently large. More specifically, we shall require that
\begin{equation}
\label{e:const_c}
c\geq \frac{\lambda-\alpha}{L}\left(M_{\mathcal{L}}{K}_{\varphi}+\frac{M_{\mathcal{L}}M_{\varphi}}{\lambda}\right)+1,\vspace{-0.3cm}
\end{equation}
where\vspace{-0.1cm}
\begin{gather}
\label{def:M_l} M_{\mathcal{L}}:=\sup\{\mathcal{L}(y):\,\rho_Y(y,y^*)\leq 4b/(1-a)\},\\
\label{def:M_f} M_{\varphi}:=\sup\left\{\varphi(t): t\leq \lim_{s\to \alpha} s^{-1}\ln\left( \lambda(\lambda-s)^{-1}\right)\right\},
\end{gather}
and the constants $a$ and $b$ are given by \eqref{e:ab}.

Conditions \ref{cnd:a3} and \ref{cnd:a4} are fulfilled, e.g. for the flows generated by some classes of dissipative differential equations. This rests on the following observation: 
\begin{rem}
Suppose that $Y$ is a closed subset of a Hilbert space $H$, endowed with an inner product $\<\cdot|\cdot\>$. Let $A_i: Y\to H$, $i\in I$, be a finite collection of $\alpha$-dissipative operators with some $\alpha\leq 0$, i.e.
$$\<A_i y_1 - A_i y_2\,|\,y_1-y_2\>\leq \alpha \norma{y_1-y_2}^2\quad\text{for any}\quad y_1,y_2\in Y,\;i\in I.$$
Furthermore, assume that there exists $T\in (0,\infty)$, such that
$$Y\subset\operatorname{Range}(\operatorname{id}_Y-tA_i)\quad\text{for all}\quad t\in (0,T),\;i\in I.$$
Then, according to \cite[Theorem 5.11]{b:kappel}, for any $i\in I$ and $y\in Y$, the initial value problem
$$\frac{d}{dt}u(t)=A_i\,u(t)\;\;\text{for}\;\;t\geq 0,\quad u(0)=y,$$
has a unique (strong) solution $\mathbb{R}_+\ni t\mapsto S_i(t,y)\in Y$, which obviously generates a semiflow. What is more, by virtue of \hbox{\cite[Theorem 5.3 and Corollary 5.4]{b:kappel}}, the semiflows $S_i$ satisfy 
$$\norma{S_i(t,y_1)-S_i(t,y_2)}\leq e^{\alpha t}\norma{y_1-y_2}\quad\text{for any}\quad y_1,y_2\in Y,$$
$$\norma{S_i(t,y)-y}\leq t \norma{A_iy}\quad\text{for any}\quad y\in Y.$$
This, in turn, implies that conditions \ref{cnd:a3} and \ref{cnd:a4} hold for such $S_i$ with the given $\alpha$,
$$L=1,\quad\mathcal{L}(y)=2\max_{i\in I}\norma{A_i y}\quad
\text{and}\quad\varphi(t)=t,$$
provided that $A_i$, $i\in I$, are bounded on bounded sets, and that $\widetilde{a}+\alpha/\lambda<1$ for $\widetilde{a}$ specified by \ref{cnd:a1}.
\end{rem}

The following simple example (inspired by \cite[Example 5.2]{b:benaim1}) illustrates the case wherein \ref{cnd:a4} holds with a non-linear function $\varphi$.
\begin{ex}
Suppose that $Y$ is a (closed) subset of a Banach space, and consider the semiflows $S_1,S_2:\mathbb{R}_+\times Y\to Y$ given by
$$S_1(t,y)=e^{\alpha t}y,\quad S_2(t,y)=e^{\alpha t}(y-r)+r,$$
where $r\in\mathbb{R}\backslash\{0\}$ and $\alpha<0$. Then conditions \ref{cnd:a3} and \ref{cnd:a4} hold for $\{S_1,S_2\}$ with the given $\alpha$, $L=1$, $\mathcal{L}\equiv 1$ and $\varphi(t)=|r|(1-e^{\alpha t})$, provided that $\widetilde{a}+\alpha/\lambda<1$.
\end{ex}

\subsection{The main result}
Let us consider a substochastic kernel $\bar{Q}_P:Z\times \mathcal{B}(Z)\to [0,1]$ (where $Z=X^2\times\mathbb{R}_+$), given~by
\begin{align}
\begin{split}
\label{def:q_bar}
\bar{Q}_P((x_1,x_2,s),D)&:=\sum_{j\in I} (\pi_{i_1,j} \wedge \pi_{i_2,j})\int_0^{\infty}\lambda e^{-\lambda h} \int_{Y^2} \mathbbm{1}_D((u_1,j),(u_2,j),h+s)\\&\quad\times Q_J((S_{i_1}(h,y_1),S_{i_2}(h,y_2)),du_1\times du_2)\,dh
\end{split}
\end{align}
for any $x_1:=(y_1,i_1), x_2:=(y_2,i_2)\in X$, $s\in\mathbb{R}_+$ and $D\in\mathcal{B}(Z)$. 

Having in mind \eqref{e:Q_J}, it is easy to see that, for any $x_1,x_2\in X$, $s\in\mathbb{R}_+$, $A\in\mathcal{B}(X)$ and $T\in\mathcal{B}(\mathbb{R}_+)$, we have
\begin{equation}
\label{def:r_bar}
\begin{gathered}
\bar{Q}_P((x_1,x_2,s),A\times X\times\mathbb{R}_+)\leq P(x_1,A), \\ \bar{Q}_P((x_1,x_2,s),X\times A\times\mathbb{R}_+)\leq P(x_2,A),\\
\bar{Q}_P((x_1,x_2,s),X^2\times T)\leq E_{\lambda}(s,T),
\end{gathered}
\end{equation}
where $E_{\lambda}(s,\cdot)$ denotes the distribution with density $t\mapsto \mathbbm{1}_{[s,\infty)}(t)\lambda e^{\lambda(t-s)}$. This enables us to define a substochastic kernel $\bar{R}_P: Z\times \mathcal{B}(Z)\to [0,1]$ so that, on cubes $D:=A_1\times A_2\times T$, where $A_1,A_2\in\mathcal{B}(X)$ and $T\in\mathcal{B}(\mathbb{R}_+)$, the measure $\bar{R}_P((x_1,x_2,s),\cdot)$ is given by
\begin{align}
\label{e:qr}
\bar{R}_P((x_1,x_2,s),D)&:=\frac{1}{\left[1-\bar{Q}_P((x_1,x_2,s),Z)\right]^2}  \left[P(x_1,A_1)-\bar{Q}_P((x_1,x_2,s),A_1\times X\times\mathbb{R}_+)\right]\nonumber\\
&\;\quad\times \left[P(x_2,A_2)-\bar{Q}_P((x_1,x_2,s),X\times A_2\times\mathbb{R}_+)\right]\\
&\;\quad\times \left[E_{\lambda}(s,T)-\bar{Q}_P((x_1,x_2,s),X^2\times T)\right]\nonumber
\end{align}
when $\bar{Q}_P((x_1,x_2,s),Z)<1$, and $\bar{R}_P((x_1,x_2,s),D):=0$ otherwise. 

A simple computation shows that $\widehat{P}:\mathcal{B}(Z)\times Z\to[0,1]$ given by
\begin{equation}
\label{e:coup_exp}
\widehat{P}((x_1,x_2,s),D):=\bar{Q}_P((x_1,x_2,s),D)+\bar{R}_P((x_1,x_2,s),D)
\end{equation}
for any $x_1,x_2\in X,\;s\geq 0$ and $D\in\mathcal{B}(Z)$ defines a stochastic kernel satisfying conditions~\eqref{def:coup}. In~other words, the kernel defined in this way can play the role of transition law of the augmented coupling \hbox{$\widehat{\Phi}:=\{(\Phi_n^{(1)},\Phi_n^{(2)}, \widetilde{\tau}_n)\}_{n\in\n_0}$} discussed in Section \ref{sec:coup}. 

What is more, one can show that such a coupling fulfils hypothesis \ref{cnd:c} of Theorem \ref{thm:main1}, which is stated precisely in the following result:
\begin{prop}\label{lem:coup}
Suppose that conditions \ref{cnd:a1}-\ref{cnd:a6} and \eqref{e:const_c} hold. Then the coupling $\widehat{\Phi}$  with transition law $\widehat{P}$ defined by \eqref{e:coup_exp} satisfies \eqref{e:coup_prop_v} with $V$ given by \eqref{V_rho}, some $q\in (0,1)$ and some $C_0<\infty$.
\end{prop}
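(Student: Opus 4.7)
The plan is to exhibit a test function on $X^2$ that contracts under $\widehat{P}$ at a geometric rate, and then iterate. The decomposition $\widehat{P}=\bar{Q}_P+\bar{R}_P$ is designed so that $\bar{Q}_P$ captures the ``contractive coupled'' part of the dynamics, while the defect kernel $\bar{R}_P$ contributes only a lower-order perturbation through~\eqref{e:q3}. As a preliminary step, I would re-use the calculation already carried out inside the proof of Theorem~\ref{thm:main1}: hypotheses~\ref{cnd:a1}--\ref{cnd:a3} yield $PV\leq aV+b$ with $a=\tilde{a}L\lambda/(\lambda-\alpha)\in(0,1)$, where $V(y,i)=\rho_Y(y,y^*)$. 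Through the marginal identities in~\eqref{def:coup}, this lifts to the two-variable drift $\widehat{P}\widehat{V}\leq a\widehat{V}+2b$ for $\widehat{V}(x_1,x_2,s):=V(x_1)+V(x_2)$. In particular, the sublevel set $K_R:=\{\widehat{V}\leq R\}$ with $R:=4b/(1-a)$ attracts the chain geometrically, and on this set the function $\mathcal{L}$ of~\ref{cnd:a4} is bounded by the constant $M_{\mathcal{L}}$ from~\eqref{def:M_l}.

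The core step is a one-step contraction on $K_R$ of the unbounded semidistance $\omega(x_1,x_2):=\rho_Y(y_1,y_2)+c\dd(i_1,i_2)$. I would compute $\bar{Q}_P\omega$ by splitting according to whether $i_1=i_2$ or not. When $i_1=i_2$, the coupled jump sends both indices to a common $j$, the common flow contracts $\rho_Y$ by $Le^{\alpha h}$ thanks to~\ref{cnd:a3}, and then~\eqref{e:q1} contracts further by $\tilde{a}$; integration over $h$ against $\lambda e^{-\lambda h}\,dh$ reproduces exactly the factor $a<1$. When $i_1\neq i_2$, assumption~\ref{cnd:a5} gives $\sum_j(\pi_{i_1,j}\wedge\pi_{i_2,j})\geq\delta:=\min_i\pi_{i,j_0}>0$, so the expected $I$-mismatch after the jump is bounded by $c(1-\delta)$; the accompanying flow divergence, controlled by $\rho_Y(S_{i_1}(h,\cdot),S_{i_2}(h,\cdot))\leq Le^{\alpha h}\rho_Y(y_1,y_2)+\varphi(h)\mathcal{L}(\cdot)$ via~\ref{cnd:a4}, adds a term of order $\tilde{a}\lambda M_{\mathcal{L}}K_{\varphi}$ after integration. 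Finally, $\bar{R}_P$ is treated using~\eqref{e:q3}: its total mass is at most of order $\rho_Y(y_1,y_2)$, so, since $\bar{\rho}_{X,c}\leq 1$, this kernel can only add an $O(\rho_Y(y_1,y_2))$ perturbation plus a short-time term governed by $M_{\varphi}$. The numerical balance that makes all these contributions combine into a genuine contraction of $\omega\wedge 1$ on $K_R$ is precisely what the lower bound~\eqref{e:const_c} enforces, with $M_{\mathcal{L}}$, $K_{\varphi}$ and $M_{\varphi}$ entering explicitly on its right-hand side.

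To finish, I would feed the one-step contraction on $K_R$ together with the two-variable drift into a standard Harris-type argument, applied to the composite test function $\Phi(x_1,x_2):=\bar{\rho}_{X,c}(x_1,x_2)+\beta\widehat{V}(x_1,x_2,0)$ with $\beta>0$ chosen small enough to preserve contraction off $K_R$ while tolerating the additive constant $2b$ in the drift. This will give $\widehat{P}\Phi\leq q\Phi$ for some $q\in(0,1)$, and iterating then yields
\[
\widehat{\ew}_{(x_1,x_2)}\!\left[\bar{\rho}_{X,c}\!\left(\Phi_n^{(1)},\Phi_n^{(2)}\right)\right]\leq\widehat{\ew}_{(x_1,x_2)}\!\left[\Phi\!\left(\Phi_n^{(1)},\Phi_n^{(2)}\right)\right]\leq q^n\Phi(x_1,x_2)\leq C_0\bigl(V(x_1)+V(x_2)+1\bigr)q^n,
\]
which is precisely~\eqref{e:coup_prop_v}. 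The main technical obstacle is the case $i_1\neq i_2$: the flow divergence over the whole exponential waiting time, the defect of $\bar{Q}_P$ coming from~\eqref{e:q3}, and the index-synchronisation gain $c\delta$ must be balanced quite delicately, and this is precisely the reason why the lower bound~\eqref{e:const_c} on $c$ is stated in terms of $M_{\mathcal{L}}$, $K_{\varphi}$ and $M_{\varphi}$.
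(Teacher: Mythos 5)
Your preliminary steps are sound: the drift $PV\le aV+b$, its two-variable lift $\widehat{P}\widehat{V}\le a\widehat{V}+2b$, and the computations of $\bar{Q}_P$ on $K_R$ (split according to $i_1=i_2$ or not, using \ref{cnd:a3}, \ref{cnd:a4}, \eqref{e:q1}, \eqref{e:q3} and \eqref{e:const_c}) correctly reproduce what the paper packages into conditions~\textnormal{(B1)}--\textnormal{(B4)} of Lemma~\ref{lem:help}. The genuine gap is in the final step: the one-step Lyapunov contraction $\widehat{P}\Phi\le q\Phi$ for $\Phi=\bar{\rho}_{X,c}+\beta\widehat{V}$ simply does not hold, and no choice of $\beta$ fixes this. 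The obstruction is the complementary kernel $\bar{R}_P$: it is a product measure, so under it the two coordinates move independently and $\bar{\rho}_{X,c}$ can stay equal to its ceiling $1$ with positive probability. Concretely, take $i_1=i_2$ and $\rho_Y(y_1,y_2)$ moderate but large enough that $\tilde{l}\rho_Y(y_1,y_2)\ge 1$, so that the lower bound from \eqref{e:q3} gives no mass to $\bar{Q}_P$. Then $\widehat{P}\bar{\rho}_{X,c}(x_1,x_2)$ can be arbitrarily close to $1=\bar{\rho}_{X,c}(x_1,x_2)$, while the drift term only gives $\widehat{P}(\beta\widehat{V})\le a\beta\widehat{V}+2b\beta$, which for $\widehat{V}$ in a bounded (but non-small) range leaves $\widehat{P}\Phi/\Phi$ above any prescribed $q<1$. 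Put differently, a one-step Wasserstein-type contraction would make the whole asymptotic-coupling machinery superfluous, and that machinery exists precisely because it is not available here.

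What the paper actually does after verifying \textnormal{(B1)}--\textnormal{(B4)} is to add the exponential moment of the return time to $K$ (condition \textnormal{(B5)}, obtained from the two-variable drift via \cite[Lemma 2.2]{b:kapica}), and then to run a genuine coupling-time argument rather than a Lyapunov iteration. This is done on the augmented chain $\widehat{\Phi}^{\star}$, which additionally records at each step whether $\bar{Q}_P$ or $\bar{R}_P$ was used. One introduces the random times $\sigma_K^{(N)}$ (entry into $K$ after time $N$) and $\zeta$ (the time after which only $\bar{Q}_P$ is used), and splits $\widehat{\ew}_{(x_1,x_2)}\big[\bar{\rho}_{X,c}(\Phi_n^{(1)},\Phi_n^{(2)})\big]$ into three pieces governed by the events $\{\sigma_K^{(N)}\le M\}\cap H_{N,n}$, $\{\sigma_K^{(N)}>M\}$ and $\{\zeta>N\}$, each of which is estimated separately via \cite[Lemma 2.2]{b:czapla_clt} using \textnormal{(B1)}--\textnormal{(B5)}. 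Choosing $N=\lfloor n/(4p)\rfloor$ and $M=\lfloor n/2\rfloor$ then yields the geometric decay. So your intuition about which structural hypotheses carry the load (the drift, the $\bar{Q}_P$-contraction on $F$, the control of the $\bar{R}_P$-defect, and the role of \eqref{e:const_c}) is correct, but the mechanism that converts them into \eqref{e:coup_prop_v} has to be the multi-step random-time decomposition, not a one-step contraction of a composite Lyapunov function.
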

The proof of this statement proceeds almost in the same way as that of \cite[Lemma 2.3]{b:czapla_clt}, provided that hypotheses \ref{cnd:b1}-\ref{cnd:b5} stated in \cite[Section 2]{b:czapla_clt} are fulfilled for the operator $P$, given by \eqref{def:P}, and the kernel $Q_P:X^2\times\mathcal{B}(X^2)\to [0,1]$ of the form
\begin{equation}
\label{def:q}
Q_P((x_1,x_2),C):=\bar{Q}_P((x_1,x_2,0),C\times\mathbb{R}_+),\quad x_1,x_2\in X,\;C\in\mathcal{B}(X^2),
\end{equation}
where $\bar{Q}_P$ is defined by \eqref{def:q_bar}. These hypotheses (also assumed in \cite[Theroem 2.1]{b:kapica}) can be derived quite easily from conditions \ref{cnd:a1}-\ref{cnd:a6}. The proof of this claim, as well as a suitable adaptation of the reasoning employed in~\cite{b:czapla_clt}, which eventually proves Proposition \ref{lem:coup}, are postponed to Section \ref{proof:lem}.

In view of Proposition \ref{lem:coup}, we can replace hypothesis \ref{cnd:c} of Theorem~\ref{thm:main1} with conditions \ref{cnd:a4}-\ref{cnd:a6}, which, together with \ref{cnd:a1}-\ref{cnd:a3}, guarantee the existence of a suitable coupling of~$\Phi$. This leads us to the main result of the paper:
\begin{thm}\label{thm:main2}
Suppose that conditions \ref{cnd:a1}-\ref{cnd:a6} and \eqref{e:const_c} hold. Further, let $V$ be given by~\eqref{V_rho}. Then, if $J$ is Feller, the transition operator $P$ of the chain $\Phi$, induced by \eqref{def:P}, is $V$-exponentially ergodic in~$d_{FM,c}$. Moreover, if \eqref{e:j_cont} holds, then the transition semigroup $\{P_t\}_{t\in\mathbb{R}_+}$ of the process $\Psi$, defined by \eqref{def:pdmp}, is \hbox{$V$-exponentially} ergodic in $d_{FM,c}$ as well.
\end{thm}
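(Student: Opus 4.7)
The plan is essentially an assembly argument: Theorem~\ref{thm:main2} is obtained by feeding the explicit coupling constructed in Proposition~\ref{lem:coup} into the abstract ergodicity result of Theorem~\ref{thm:main1}. Conditions \ref{cnd:a4}-\ref{cnd:a6} together with the lower bound \eqref{e:const_c} on $c$ are tailor-made to produce a coupling of $\Phi$ verifying the abstract hypothesis \ref{cnd:c}, while \ref{cnd:a1}-\ref{cnd:a3} provide the Foster--Lyapunov drift and the Lipschitz-type contraction on the semiflows needed by Theorem~\ref{thm:main1}.

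First I would take the kernel $\widehat{P}$ on $Z=X^2\times\mathbb{R}_+$ defined by \eqref{def:q_bar}, \eqref{e:qr} and \eqref{e:coup_exp}, and briefly verify, using \eqref{def:r_bar}, that it is stochastic and satisfies the marginal identities in \eqref{def:coup}; this qualifies it as the transition law of an augmented coupling $\widehat{\Phi}=\{(\Phi_n^{(1)},\Phi_n^{(2)},\widetilde{\tau}_n)\}_{n\in\n_0}$ of the chain $\Phi$ in the sense of Section~\ref{sec:coup}. Next I would invoke Proposition~\ref{lem:coup}, which, under the full set of assumptions \ref{cnd:a1}-\ref{cnd:a6} and \eqref{e:const_c}, guarantees that this particular $\widehat{\Phi}$ obeys
$$
\widehat{\ew}_{(x_1,x_2)}\!\left[\bar{\rho}_{X,c}(\Phi_n^{(1)},\Phi_n^{(2)})\right]\leq C_0\bigl(V(x_1)+V(x_2)+1\bigr)q^n\quad\text{for all } n\in\n,\; x_1,x_2\in X,
$$
with $V$ given by \eqref{V_rho}, some $q\in(0,1)$ and some $C_0<\infty$. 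Since $\rho_Y(\cdot,y^*)$ is continuous, $V$ is a continuous function on $X$, so this is exactly the content of hypothesis \ref{cnd:c} of Theorem~\ref{thm:main1}.

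Finally, with \ref{cnd:a1}-\ref{cnd:a3} already in force and \ref{cnd:c} now secured, Theorem~\ref{thm:main1} applies directly and delivers both conclusions: under the Feller property of $J$, the operator $P$ is $V$-exponentially ergodic in $d_{FM,c}$, and under the strengthened joint-continuity condition \eqref{e:j_cont}, the semigroup $\{P_t\}_{t\in\mathbb{R}_+}$ is $V$-exponentially ergodic in $d_{FM,c}$ as well. There is no genuinely new obstacle at the level of this theorem---all the hard work has been quarantined inside Proposition~\ref{lem:coup}, where one must verify that $P$ and the coupling kernel $Q_P$ defined by \eqref{def:q} satisfy the hypotheses of \cite[Lemma~2.3]{b:czapla_clt}; the role played here by the precise lower bound \eqref{e:const_c} on the weight $c$ in the metric $\rho_{X,c}$ is to absorb the discrepancy between the two semiflows $S_i$, $S_j$ (controlled via $\varphi$ and $\mathcal{L}$ from \ref{cnd:a4}) into a genuine contraction on $X^2$, which is what allows the coupling contraction on $\Phi$ to close.
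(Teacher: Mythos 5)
Your proposal is correct and matches the paper's own argument exactly: Theorem~\ref{thm:main2} is obtained precisely by noting that Proposition~\ref{lem:coup} supplies a coupling satisfying hypothesis \ref{cnd:c} (with the continuous function $V$ from \eqref{V_rho}), after which Theorem~\ref{thm:main1} yields both conclusions. No further commentary is needed.
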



\subsection{A model with jumps generated by random iterations}\label{sec:ifs}
Let us look closer at the case that has already been mentioned in Remark \ref{rem:spec}. For simplicity of notation, we will skip the perturbations (the linear structure of $Y$ is then not required). In such a case, the kernel $J$ is the transition law of an iterated function system, consisting of an arbitrary set \hbox{$\mathcal{W}:=\{w_{\theta}:\,\theta\in \Theta\}$} of continuous transformations from $Y$ to itself and an associated set $\mathcal{P}:=\{p_{\theta}:\,\theta\in\Theta\}$ of place-dependent probabilities, mapping $Y$ to $[0,1]$. Here, it is assumed that $(\Theta,\mathcal{B}(\Theta),\vartheta)$ is a topological space with a measure $\vartheta$, the maps $(y,\theta)\mapsto w_{\theta}(y)$, $(y,\theta)\mapsto p_{\theta}(y)$ are product measurable, and  $\int_{\Theta} p_{\theta}(y)\,\vartheta(d\theta)=1$ for any~$y\in Y$. 

In the above-described setting, $J$ is given by
\begin{equation}
\label{e:ifs}
\p(y,B)=\int_{\Theta}\mathbbm{1}_B(w_{\theta}(y))\,p_{\theta}(y)\,\vartheta(d\theta)\quad\text{for}\quad y\in Y,\; B\in\mathcal{B}(Y),
\end{equation}
and $P$ takes the form
\begin{equation}
\label{def:P_ifs}
P((y,i), A)=\sum_{j\in I} \pi_{ij} \int_0^{\infty} \lambda e^{-\lambda h}\int_{\Theta} \mathbbm{1}_{A}(w_{\theta}(S_i(h,y),j)\,p_{\theta}(S_i(h,y))\,\vartheta(d\theta)\,dh
\end{equation}
for any $y\in Y$, $i\in I$ and $A\in\mathcal{B}(X)$. Moreover, note that, in this framework, the first coordinate of the chain~$\Phi$ can be expressed explicitly by the recursive formula:
$$Y_{n+1}=w_{\theta_{n+1}}(S_{\xi_n}(\Delta\tau_{n+1},Y_n)),\quad n\in\n_0,$$
where $\{\theta_n\}_{n\in\n}$ is an appropriate sequence of random variables with values in $\Theta$, such that
$$\pr_{\nu}(\theta_{n+1}\in D\,|\,S_{\xi_n}(\Delta\tau_{n+1},Y_n)=y)=\int_D p_{\theta}(y)\vartheta(d\theta)\quad\text{for}\quad D\in\mathcal{B}(\Theta),\;y\in Y,\;n\in\n.$$

We shall impose the following assumptions (in the spirit of those made in \hbox{\cite[Proposition 3.1]{b:kapica}}; cf. also \cite{b:sleczka} and \hbox{\cite[Theorem 3.1]{b:szarek}}) on the system $(\mathcal{W},\mathcal{P})$: there exist $y^*\in Y$, for which
\begin{equation}
\label{cnd:i1} \tilde{b}:=\sup_{y\in Y}\int_{\Theta} \rho_Y(w_{\theta}(y^*),y^*)p_{\theta}(y)\,\vartheta(d\theta)<\infty,
\end{equation}
and positive constants $\tilde{a}, \tilde{l}$ and $\eta$ such that, for any $y_1,y_2\in Y$,
\begin{gather}
\label{cnd:i2} \int_{\Theta} \rho_Y(w_{\theta}(y_1),w_{\theta}(y_2))\,p_{\theta}(y_1)\vartheta(d\theta)\leq \tilde{a}\rho_Y(y_1,y_2),\\
\label{cnd:i4} \int_{\Theta(y_1,y_2)} p_{\theta}(y_1)\wedge p_{\theta}(y_2)\,\vartheta(d\theta)\geq \eta,
\end{gather}
where
\begin{equation*}
\Theta(y_1,y_2):=\{\theta\in \Theta:\,\rho_Y(w_{\theta}(y_1),w_{\theta}(y_2))\leq \tilde{a}\rho_Y(y_1,y_2)\},
\end{equation*}
and
\begin{equation}
\label{cnd:i3} \int_{\Theta} |p_{\theta}(y_1)-p_{\theta}(y_2)|\,\vartheta(d\theta)\leq \tilde{l}\rho_Y(y_1,y_2).
\end{equation}
\begin{rem}
Note that \eqref{cnd:i1} is trivially satisfied in the case where $\Theta$ is compact, and $\theta\mapsto w_{\theta}(y^*)$ is continuous for some $y^*\in Y$.
\end{rem}

Theorem \ref{thm:main2} allows us to establish the following result:
\begin{prop}\label{prop:ifs}
Suppose that the kernel $J$ is of the form \eqref{e:ifs}, and the transformations~$w_{\theta}$ are continuous. Further, assume that there exist $y^*\in Y$ and positive constants $\tilde{a}, \tilde{l}$, $\eta$ such that conditions \eqref{cnd:i1}-\eqref{cnd:i3}, \ref{cnd:a2}-\ref{cnd:a5} and \eqref{e:const_c} hold. Then both the transition operator $P$ of the chain $\Phi$ (induced by \eqref{def:P_ifs} in this case) and the transition semigroup $\{P_t\}_{t\in\mathbb{R}_+}$ of the process $\Psi$, defined by \eqref{def:pdmp}, are $V$-exponentially ergodic in $d_{FM,c}$ with $V$ given by \eqref{V_rho}.
\end{prop}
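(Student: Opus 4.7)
The plan is to reduce Proposition \ref{prop:ifs} to Theorem \ref{thm:main2} by verifying that the assumptions of the latter are either already assumed here (namely \ref{cnd:a2}--\ref{cnd:a5} and \eqref{e:const_c}) or can be derived from the IFS hypotheses \eqref{cnd:i1}--\eqref{cnd:i3}. Concretely, three things remain to check: the Lyapunov estimate \ref{cnd:a1}, the coupling kernel hypothesis \ref{cnd:a6}, and the strengthened Feller property \eqref{e:j_cont} needed for the $\{P_t\}$ part.

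For \ref{cnd:a1}, I would first split $\rho_Y(w_\theta(y),y^*) \leq \rho_Y(w_\theta(y),w_\theta(y^*)) + \rho_Y(w_\theta(y^*),y^*)$ inside the integral defining $J\rho_Y(\cdot,y^*)(y)$, then apply \eqref{cnd:i2} to the first piece and \eqref{cnd:i1} to the second; this yields \eqref{e:lap_p} with the same $\tilde a$ and $\tilde b$ as in the hypotheses. The core of the proof is the construction of $Q_J$. The natural candidate, mimicking the classical maximal coupling with place-dependent probabilities, is
\begin{equation*}
Q_J((y_1,y_2),B) := \int_\Theta \mathbbm{1}_B\bigl(w_\theta(y_1),w_\theta(y_2)\bigr)\bigl(p_\theta(y_1)\wedge p_\theta(y_2)\bigr)\,\vartheta(d\theta),\quad B\in\mathcal{B}(Y^2).
\end{equation*}
The marginal inequalities \eqref{e:Q_J} follow from the pointwise bound $p_\theta(y_1)\wedge p_\theta(y_2) \leq p_\theta(y_i)$. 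Condition \eqref{e:q1} is then immediate from \eqref{cnd:i2}, since dropping the ``$\wedge p_\theta(y_2)$'' in the integrand only increases it. For \eqref{e:q2}, by the very definition of $\widetilde{U}$ in \eqref{def:u_tilde} and of the set $\Theta(y_1,y_2)$ in the IFS hypotheses, the integral of $\mathbbm{1}_{\widetilde U(\tilde a\rho_Y(y_1,y_2))}(w_\theta(y_1),w_\theta(y_2))$ against $p_\theta(y_1)\wedge p_\theta(y_2)\,\vartheta(d\theta)$ is exactly bounded below by $\eta$ in \eqref{cnd:i4}. Finally, \eqref{e:q3} follows from the identity $p_\theta(y_1)-p_\theta(y_1)\wedge p_\theta(y_2) \leq (p_\theta(y_1)-p_\theta(y_2))_+ \leq |p_\theta(y_1)-p_\theta(y_2)|$ integrated against $\vartheta$, together with \eqref{cnd:i3}.

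The remaining point is \eqref{e:j_cont}. For $g\in C_b(Y\times\mathbb{R}_+)$, write
\begin{equation*}
Jg(\cdot,t)(y) = \int_\Theta g(w_\theta(y),t)\,p_\theta(y)\,\vartheta(d\theta),
\end{equation*}
and for a sequence $(y_n,t_n)\to (y,t)$ decompose the difference $Jg(\cdot,t_n)(y_n)-Jg(\cdot,t)(y)$ into a transformation part $\int [g(w_\theta(y_n),t_n)-g(w_\theta(y),t)]p_\theta(y_n)\,\vartheta(d\theta)$ and a probability part $\int g(w_\theta(y),t)[p_\theta(y_n)-p_\theta(y)]\,\vartheta(d\theta)$. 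The probability part is controlled by $\|g\|_\infty \tilde l\,\rho_Y(y_n,y)$ via \eqref{cnd:i3}. The transformation part I would further split, using \eqref{cnd:i3} once more, against $p_\theta(y)$ instead of $p_\theta(y_n)$, after which dominated convergence applies (the integrand tends to $0$ pointwise in $\theta$ by continuity of $w_\theta$ and of $g$, and is dominated by $2\|g\|_\infty p_\theta(y)$, which is $\vartheta$-integrable).

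The main technical obstacle I anticipate is the last step: although pointwise continuity of $w_\theta(\cdot)$ is given, dominated convergence applies only after the ``change of probability density'' trick just described, which is what couples \eqref{cnd:i3} with the Feller-type conclusion. Once \ref{cnd:a1}, \ref{cnd:a6} and \eqref{e:j_cont} are established, Theorem \ref{thm:main2} delivers both the $V$-exponential ergodicity of $P$ in $d_{FM,c}$ and, thanks to \eqref{e:j_cont}, that of $\{P_t\}_{t\in\mathbb{R}_+}$, which completes the proof.
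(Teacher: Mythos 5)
Your proposal is correct and follows essentially the same route as the paper's proof: the same kernel $Q_J((y_1,y_2),C)=\int_\Theta\mathbbm{1}_C(w_\theta(y_1),w_\theta(y_2))\,(p_\theta(y_1)\wedge p_\theta(y_2))\,\vartheta(d\theta)$ is constructed, \ref{cnd:a1} and the four properties in \ref{cnd:a6} are derived from \eqref{cnd:i1}--\eqref{cnd:i3} in the same way, and \eqref{e:j_cont} is established by the same add-and-subtract decomposition together with dominated convergence, after which Theorem~\ref{thm:main2} is invoked. The only cosmetic deviation is in the Feller step: you add and subtract $g(w_\theta(y),t)\,p_\theta(y_n)$ (so the transformation part carries the moving density, forcing one extra application of \eqref{cnd:i3} before dominated convergence), whereas the paper subtracts $g(w_\theta(y),t)\,p_\theta(y_0)$ so that the transformation part already carries the fixed, integrable dominating function; the substance is identical.
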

\begin{proof}
In view of Theorem \ref{thm:main2}, it suffices to show that conditions  \ref{cnd:a1}, \ref{cnd:a6} and \eqref{e:j_cont} hold.

First of all, note that \ref{cnd:a1} follows immediately from \eqref{cnd:i1} and \eqref{cnd:i2}, since
$$J\rho_Y(\cdot,y^*)(y)=\int_{\Theta} \rho_Y(w_{\theta}(y),y^*)p_{\theta}(y)\,\vartheta(d\theta)\leq \tilde{a}\rho_Y(y^*,y)+\tilde{b}\quad\text{for all}\quad y\in Y.$$

Now, we will show that \ref{cnd:a6} is fulfilled with $Q_J:X^2\times\mathcal{B}(X^2)\to[0,1]$ given by
\begin{equation}
\label{def:Qj_ifs}
Q_J((y_1,y_2),C):=\int_{\Theta}\mathbbm{1}_C(w_{\theta}(y_1),w_{\theta}(y_2))(p_{\theta}(y_1)\wedge p_{\theta}(y_2))\,\vartheta(d\theta)
\end{equation}
for any $y_1,y_2\in Y$ and $C\in\mathcal{B}(Y^2)$. Obviously, $Q_J$ is a substochastic kernel satisfying~\eqref{e:Q_J}. Condition \eqref{cnd:i2} yields that, for any $y_1,y_2\in Y$,
$$Q_J\rho_Y(y_1,y_2)=\int_{\Theta}\rho_Y(w_{\theta}(y_1),w_{\theta}(y_2))(p_{\theta}(y_1)\wedge p_{\theta}(y_2))\,\vartheta(d\theta)\leq \tilde{a}\rho_Y(y_1,y_2),$$
which gives \eqref{e:q1}. Further, \eqref{cnd:i4} implies \eqref{e:q2}, since, for any $y_1,y_2\in Y$, we have
$$Q_J((y_1,y_2),\widetilde{U}(\tilde{a}\rho_Y(y_1,y_2))=\int_{\Theta(y_1,y_2)} p_{\theta}(y_1)\wedge p_{\theta}(y_2)\,\vartheta(d\theta)\geq \eta>0,$$
with $\widetilde{U}(\cdot)$ defined by \eqref{def:u_tilde}. Finally, \eqref{e:q3} can be easily concluded from hypothesis \eqref{cnd:i3} and the \hbox{inequality} $s\wedge t \geq s-|s-t|$, which is valid for any $s,t\in\mathbb{R}$.

What is left is to show that $J$ satisfies \eqref{e:j_cont}. To this end, let $g\in C_b(Y\times\mathbb{R}_+)$ and fix \hbox{$(y_0,t_0)\in Y\times\mathbb{R}_+$}. Then, again using \eqref{cnd:i3}, we see that
\begin{align*}
|Jg(\cdot,t_0)(y_0) - Jg(\cdot,t)(y)|&\leq\int_{\Theta}|g(w_{\theta}(y_0),t_0)p_{\theta}(y_0)-g(w_{\theta}(y),t)p_{\theta}(y)|\,\vartheta(d\theta)\\
&\leq\int_{\Theta}|g(w_{\theta}(y_0),t_0)-g(w_{\theta}(y),t)|p_{\theta}(y_0)\,\vartheta(d\theta)\\
&\quad + \norma{g}_{\infty}\tilde{l}\rho_Y(y_0,y)\quad\text{for any}\quad (y,t)\in Y\times\mathbb{R}_+.
\end{align*}
Consequently, having in mind the continuity of $g$ and the transformations $w_\theta$, we conclude that the map $Y\times\mathbb{R}_+\ni (y,t)\mapsto Jg(\cdot,t)(y)$ is jointly continuous, by applying the Lebesgue dominated convergence theorem. The use of Theorem \ref{thm:main2} now ends the proof.
\end{proof}

\begin{rem}
Obviously, Proposition \ref{prop:ifs} remains valid if the kernel $J$ is defined exactly as in Remark~\ref{rem:spec}. The proof is then almost the same as that given above. In that case, however, one needs to consider $Q_J$ of the form
$$Q_J((y_1,y_2),C):=\int_{\supp\nu}\int_{\Theta}\mathbbm{1}_C(w_{\theta}(y_1)+v,w_{\theta}(y_2)+v) (p_{\theta}(y_1)\wedge p_{\theta}(y_2))\,\vartheta(d\theta)\,\nu(dv).$$
\end{rem}

\subsection{Proof of Proposition \ref{lem:coup}} \label{proof:lem}
Let $P$ and $Q_P$ be the kernels defined by \eqref{def:P} and \eqref{def:q}, respectively. Moreover, consider the augmented coupling $\widehat{\Phi}$ of the chain $\Phi$ (constructed in Section \ref{sec:coup}) with transition law $\widehat{P}$ defined by \eqref{e:coup_exp}. In particular, $\{(\Phi_n^{(1)},\Phi_n^{(2)} )\}_{n\in\n_0}$ itself is then governed by the kernel
\begin{equation}
\label{def:P_tilde}
\widetilde{P}((x_1,x_2),C):=\widehat{P}((x_1,x_2,0),C\times \mathbb{R}_+)\quad\text{for}\quad x_1,x_2\in X,\;C\in\mathcal{B}(X^2).
\end{equation}

In order to prove Proposition~\ref{lem:coup}, we first need to derive hypotheses \ref{cnd:b1}-\ref{cnd:b5}, used in \hbox{\cite[Section 2]{b:czapla_clt}} (and also assumed in \cite[Theroem 2.1]{b:kapica}), from conditions \ref{cnd:a1}-\ref{cnd:a6} and \eqref{e:const_c}. To begin, let us recall that $a$, $b$ are the constants specifed by \eqref{e:ab}, and that $V$ stands for the function given by \eqref{V_rho}. Further, define
$$F:=G\cup K,\vspace{-0.4cm}$$ where
\begin{equation}
\begin{gathered}
\label{def:F}
G:=\{((y_1,i_1),(y_2,i_2))\in X^2:\,i_1=i_2\},\\
K:=\{((y_1,i_1),(y_2,i_2))\in X^2:\, V(y_1,i_1)+V(y_2,i_2)<R \}\quad\text{with}\quad R:=\frac{4b}{1-a}.
\end{gathered}
\end{equation}
\begin{lem} 
\label{lem:help} 
Suppose that conditions \ref{cnd:a1}-\ref{cnd:a6} and \eqref{e:const_c} hold. Then the following statements are fulfilled:
\begin{enumerate}[label=\textnormal{(B\arabic*)}]
\item\label{cnd:b1}$PV(x)\leq aV(x)+b\;\;\;\mbox{for any}\;\;\;x\in X.$
\item\label{cnd:b2}$\supp Q_P((x_1,x_2),\cdot)\subset F$ and
$$\int_{X^2}\rho_{X,c}(w_1,w_2)\, Q_P((x_1,x_2),dw_1\times dw_2)\leq a\rho_{X,c}(x_1,x_2)\;\;\;\mbox{for any}\;\;\;(x_1,x_2)\in F.$$
\item\label{cnd:b3}Defining $U(r):=\{(w_1,w_2)\in X^2:\, \rho_{X,c}(w_1,w_2)\leq r\}$ for any $r>0$, we have
$$\inf_{(x_2,x_2)\in F}Q_P((x_1,x_2),U(a\rho_{X,c}(x_1,x_2)))>0,$$
\item\label{cnd:b4}There exists  $l>0$ such that $Q_P((x_1,x_2),X^2)\geq 1-l\rho_{X,c}(x_1,x_2)$ for any $(x_1,x_2)\in F$.
\item\label{cnd:b5}There exist $\gamma\in(0,1)$ and $C_{\gamma}>0$ such that
$$\widehat{\ew}_{(x_1,x_2)}(\gamma^{-\sigma_K})\leq C_{\gamma}\quad\mbox{whenever}\;\;V(x_1)+V(x_2)<R,$$
where $\sigma_K:=\inf\{n\in\n:\,(\Phi^{(1)}_n,\Phi^{(2)}_n)\in K\}.$
\end{enumerate}
\end{lem}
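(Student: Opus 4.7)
My strategy is to verify each of \ref{cnd:b1}--\ref{cnd:b5} in turn using only \ref{cnd:a1}--\ref{cnd:a6} and \eqref{e:const_c}, after which Proposition~\ref{lem:coup} will follow almost verbatim from \cite[Lemma~2.3]{b:czapla_clt}. Condition \ref{cnd:b1} is exactly the drift estimate already carried out in the opening paragraphs of the proof of Theorem~\ref{thm:main1}: combining \ref{cnd:a1}--\ref{cnd:a3} gives $PV\le aV+b$ with $a,b$ as in \eqref{e:ab}. The support statement $\supp Q_P((x_1,x_2),\cdot)\subset G\subset F$ is visible from \eqref{def:q_bar}--\eqref{def:q}, since every sampled pair has the form $((u_1,j),(u_2,j))$ for a common $j\in I$; hence $\rho_{X,c}(w_1,w_2)=\rho_Y(u_1,u_2)$ on $\supp Q_P$.

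For the integral bound in \ref{cnd:b2} I would split $F=G\cup(K\setminus G)$. On $G$, where $i_1=i_2=i$ and $\sum_j(\pi_{ij}\wedge\pi_{ij})=1$, inequality \eqref{e:q1} combined with \ref{cnd:a3} directly produces
\[
\int_{X^2}\!\rho_{X,c}\,dQ_P\;\le\;\tilde a\!\int_0^{\infty}\!\lambda e^{-\lambda h}\rho_Y(S_i(h,y_1),S_i(h,y_2))\,dh\;\le\;\frac{\tilde a L\lambda}{\lambda-\alpha}\rho_Y(y_1,y_2)\;=\;a\,\rho_{X,c}(x_1,x_2).
\]
On $K\setminus G$ the triangle inequality together with \ref{cnd:a3} and \ref{cnd:a4} yields $\rho_Y(S_{i_1}(h,y_1),S_{i_2}(h,y_2))\le Le^{\alpha h}\rho_Y(y_1,y_2)+\varphi(h)\mathcal L(y_2)$, and since $(x_1,x_2)\in K$ forces $\mathcal L(y_2)\le M_{\mathcal L}$ via \eqref{def:M_l}, integration against $\lambda e^{-\lambda h}\,dh$ contributes the excess term $\tilde a\lambda M_{\mathcal L}K_\varphi$, which is exactly what the first summand of \eqref{e:const_c} permits to absorb into $ac=a\rho_{X,c}(x_1,x_2)-a\rho_Y(y_1,y_2)$.

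For \ref{cnd:b3}, I would localize the $h$-integration to the interval on which $Le^{\alpha h}\le\lambda/(\lambda-\alpha)$, whose critical endpoint is $T:=\lim_{s\to\alpha}s^{-1}\ln(\lambda/(\lambda-s))$, so that $\varphi\le M_\varphi$ by \eqref{def:M_f} on the relevant window. Combining \ref{cnd:a5}, which forces $\sum_j(\pi_{i_1j}\wedge\pi_{i_2j})\ge\min_i\pi_{ij_0}>0$, with the uniform lower bound \eqref{e:q2} delivers a strictly positive bound for $Q_P((x_1,x_2),U(a\rho_{X,c}(x_1,x_2)))$ on $G$; on $K\setminus G$ the cross-flow correction $\tilde a\varphi(h)M_{\mathcal L}\le\tilde aM_\varphi M_{\mathcal L}$ is absorbed into $ac$ by the second summand of \eqref{e:const_c}. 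For \ref{cnd:b4}, \eqref{e:q3} and \ref{cnd:a3} give $Q_P((x_1,x_2),X^2)\ge1-\tilde lL\lambda(\lambda-\alpha)^{-1}\rho_Y(y_1,y_2)$ on $G$; on $K\setminus G$ we have $\rho_{X,c}(x_1,x_2)\ge c\ge1$ by \eqref{e:const_c}, so any $l\ge1$ makes $1-l\rho_{X,c}\le0$ and renders the bound vacuous. Taking $l:=\max\{\tilde lL\lambda/(\lambda-\alpha),\,1\}$ settles \ref{cnd:b4}.

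Condition \ref{cnd:b5} is the only one that does not follow from direct inspection of $Q_P$. Since both marginals of the kernel $\widetilde P$ defined by \eqref{def:P_tilde} are equal to $P$, applying \ref{cnd:b1} coordinate-wise yields $\widetilde P\bar V\le a\bar V+2b$ for $\bar V(x_1,x_2):=V(x_1)+V(x_2)$; on $K^c$ this improves to $\widetilde P\bar V\le\tfrac{1+a}{2}\bar V$ since $\bar V\ge R=4b/(1-a)$ there. A standard Meyn--Tweedie style supermartingale argument using $\gamma^{-n\wedge\sigma_K}\bar V(\Phi^{(1)}_{n\wedge\sigma_K},\Phi^{(2)}_{n\wedge\sigma_K})$ then delivers $\widehat\ew_{(x_1,x_2)}[\gamma^{-\sigma_K}]\le C_\gamma$ uniformly on $K$ for any $\gamma\in(\tfrac{1+a}{2},1)$. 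The genuine technical obstacle throughout is the $c$-bookkeeping in \ref{cnd:b2}--\ref{cnd:b3} when $i_1\ne i_2$: both cross-flow contributions $\tilde a\lambda M_{\mathcal L}K_\varphi$ and $\tilde aM_\varphi M_{\mathcal L}$ must fit inside $ac=\tilde aL\lambda c/(\lambda-\alpha)$, and $c$ must simultaneously be large enough to make \ref{cnd:b4} trivial outside $G$; this triple demand is exactly what dictates the specific form of \eqref{e:const_c}.
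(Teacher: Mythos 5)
Your proposal is correct and follows essentially the same route as the paper's proof: \ref{cnd:b1} is the drift bound already derived in the proof of Theorem~\ref{thm:main1}; \ref{cnd:b2}--\ref{cnd:b4} come from the explicit form of $Q_P$ combined with the cross-flow estimate $\rho_Y(S_{i_1}(h,y_1),S_{i_2}(h,y_2))\le Le^{\alpha h}\rho_Y(y_1,y_2)+\varphi(h)M_{\mathcal L}\dd(i_1,i_2)$, the truncation of the $h$-integral at $t_0=\lim_{s\to\alpha}s^{-1}\ln(\lambda(\lambda-s)^{-1})$, and the two summands of \eqref{e:const_c}; and \ref{cnd:b5} follows from the coupled drift $\widetilde{P}\widetilde{V}\le a\widetilde{V}+2b$. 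The only cosmetic deviations are that you dispose of \ref{cnd:b4} off the diagonal set $G$ by observing that $1-l\rho_{X,c}\le 1-c\le 0$ there (the paper instead estimates $Q_P(\cdot,X^2)$ in full, using $\sum_{j}\pi_{i_1 j}\wedge\pi_{i_2 j}\ge 1-\dd(i_1,i_2)$), and that you run the geometric-trial/supermartingale argument for \ref{cnd:b5} by hand where the paper simply invokes \cite[Lemma 2.2]{b:kapica}.
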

\begin{proof}
First of all, note that condition \ref{cnd:b1} has already been established at the beginning of the proof of Theorem \ref{thm:main1}.

The proof of the first part of \ref{cnd:b2} goes as follows. Let $(x_1,x_2):=((y_1,i_1),(y_2,i_2))\in X^2$. Since $X^2$ is endowed with the product topology, we may consider it as a metric space with the distance
$$\rho_{X^2,c}((w_1,w_2),(z_1,z_2)):=\rho_{X,c}(w_1,z_1)+\rho_{X,c}(w_2,z_2)\quad\text{for}\quad (w_1,w_2),(x_1,x_2)\in X^2.$$
The support of $Q_P(x_1,x_2,\cdot)$ can be then expressed as
$$\supp Q_P((x_1,x_2),\cdot)=\{(z_1,z_2)\in X^2:\, Q_P((x_1,x_2),B_{X^2}((z_1,z_2),\varepsilon))>0 \;\;\text{for any}\;\;\varepsilon>0\},$$
where $B_{X^2}((z_1,z_2),\varepsilon)$  is the open ball in $(X^2,\rho_{X^2,c})$ centered at $(z_1,z_2)$ with radius $\varepsilon$. Let 
$$(z_1,z_2):=((u_1,j_1),(u_2,j_2))\in X^2\backslash F.$$
Then, in particular, $(z_1,z_2)\notin G$, and thus $j_1\neq j_2$. This implies that, for any point $(w_1,w_2):=((v_1,j),(v_2,j))\in G$, we have
$$\rho_{X^2,c}((w_1,w_2),(z_1,z_2))\geq c(\dd(j,j_1)+\dd(j,j_2))\geq c,$$
whence $B_{X^2}((z_1,z_2),c))\cap G=\emptyset$. Taking into account the definition of $Q_P$, given in \eqref{def:q}, we therefore obtain
$$Q_P((x_1,x_2), B_{X^2}((z_1,z_2),c))=Q_P((x_1,x_2), B_{X^2}((z_1,z_2),c)\cap G)=0,$$
which yields that $(z_1,z_2)\in X^2\backslash \supp Q_P((x_1,x_2),\cdot)$.

Passing to the proof of the second part of \ref{cnd:b2}, let $(x_1,x_2)=((y_1,i_1),(y_2,i_2))\in F$. Then $i_1=i_2$ or $y_1,y_2\in B_Y(y^*,R)$ (due to the definition of $V$). Hence, from \ref{cnd:a3} and \ref{cnd:a4} it follows that
\begin{align}
\begin{split}
\label{e:est_s}
\rho_Y(S_{i_1}(t,y_1),S_{i_2}(t,y_2))&\leq \rho_Y(S_{i_1}(t,y_1),S_{i_1}(t,y_2))+\rho_Y(S_{i_1}(t,y_2),S_{i_2}(t,y_2))\\
&\leq Le^{\alpha t}\rho_Y(y_1,y_2)+\varphi(t)\mathcal{L}(y_2)\dd(i_1,i_2)\\
&\leq Le^{\alpha t}\rho_Y(y_1,y_2)+\varphi(t)M_{\mathcal{L}}\dd(i_1,i_2),
\end{split}
\end{align}
where $M_{\mathcal{L}}$ is given by \eqref{def:M_l}. Consequently, referring to the definition of $Q_P$, \eqref{e:q1} and \eqref{e:const_c}, we can conclude that
\begin{align*}
&\int_{X^2}\rho_{X,c}(w_1,w_2)\, Q_P((x_1,x_2),dw_1\times w_2)\\
&=\sum_{j\in I} (\pi_{i_1,j}\wedge \pi_{i_2,j})\int_0^{\infty}\lambda e^{-\lambda h}\int_{Y^2}\rho_Y(v_1,v_2)\,Q_J((S_{i_1}(h,y_1),S_{i_2}(h,y_2)),dv_1\times dv_2)\,dh\\
&\leq  \tilde{a}\lambda \int_0^{\infty}e^{-\lambda h}\rho_Y(S_{i_1}(h,y_1),S_{i_2}(h,y_2))\,dh\\
&\leq \tilde{a}\lambda L\left(\int_0^{\infty} e^{-(\lambda-\alpha)h}\,dh \right)\rho_Y(y_1,y_2)+\tilde{a}\lambda M_{\mathcal{L}}\left(\int_0^{\infty}e^{-\lambda h}\varphi(h)\,dh\right)\dd(i_1,i_2)\\
&=\frac{\tilde{a}\lambda L}{\lambda-\alpha}\rho_Y(y_1,y_2)+ \frac{\tilde{a}\lambda L}{\lambda-\alpha}\frac{(\lambda-\alpha)M_{\mathcal{L}}K_{\varphi}}{L}\,\dd(i_1,i_2)\leq a(\rho_Y(y_1,y_2)+c\dd(i_1,i_2))\\
&=a\cdot\rho_{X,c}(x_1,x_2),
\end{align*} 
which is the desired claim.

We now proceed to show condition \ref{cnd:b3}. First, define $t_0:=\lim_{s\to \alpha} s^{-1}\ln\left( \lambda(\lambda-s)^{-1}\right)$, which is obviously positive, and observe that
\begin{equation}
\label{e:est_T}
\tilde{a}Le^{\alpha t}\leq\frac{\tilde{a}\lambda L}{\lambda-\alpha}=a\quad\text{for any}\quad t\leq t_0.
\end{equation}
Now, let $(x_1,x_2)=((y_1,i_1),(y_2,i_2))\in F$, and note that, for any $u_1,u_2\in Y$, $j\in I$, and $0\leq t\leq t_0$, we have
\begin{equation}
\label{e:imp_u}
(u_1,u_2)\in \widetilde{U}(\tilde{a}\rho_Y(S_{i_1}(t,y_1),S_{i_2}(t,y_2))\;\;\Rightarrow\;\; ((u_1,j),(u_2,j))\in U(a\rho_{X,c}(x_1,x_2)),
\end{equation}
where $\widetilde{U}(\cdot)$ and $U(\cdot)$ are defined as in  \eqref{def:u_tilde} and \ref{cnd:b3}, respectively. To see this, it suffices to \hbox{apply} \eqref{e:est_s}, \eqref{e:est_T} and \eqref{e:const_c}, which ensure that, for any $(u_1,u_2)\in U(\tilde{a}\rho_Y(S_{i_1}(t,y_1),S_{i_2}(t,y_2))$ with $t\leq t_0$,
\begin{align*}
\rho_{X,c}((u_1,j),(u_2,j))&=\rho_Y(u_1,u_2)\leq \tilde{a}\rho_Y(S_{i_1}(t,y_1),S_{i_2}(t,y_2))\\
&\leq \tilde{a}Le^{\alpha t}\rho_Y(y_1,y_2)+\tilde{a}M_{\mathcal{L}}\varphi(t)\dd(i_1,i_2)\\
&\leq a\rho_Y(y_1,y_2)+\tilde{a}M_{\mathcal{L}}M_{\varphi}\dd(i_1,i_2)\\
&= a\left(\rho_Y(y_1,y_2)+\frac{M_{\mathcal{L}}M_{\varphi}(\lambda-\alpha)}{\lambda L}\dd(i_1,i_2)\right)\leq a \rho_{X,c}(x_1,x_2),
\end{align*}
with $M_{\varphi}$ given by \eqref{def:M_f}, whence $((u_1,j),(u_2,j))\in U(a\rho_{X,c}(x_1,x_2))$. Now, using \eqref{e:imp_u}, together with \eqref{e:q2}, we obtain
\begin{align*}
&\int_{Y^2}\mathbbm{1}_{U(a\rho_{X,c}(x_1,x_2))}((u_1,j),(u_2,j)) Q_J((S_{i_1}(h,y_1),S_{i_2}(h,y_2)),du_1\times du_2)\\
&\geq \int_{Y^2}\mathbbm{1}_{\widetilde{U}\left(\tilde{a}\rho_Y(S_{i_1}(h,y_1),S_{i_2}(h,y_2)\right)}(u_1,u_2)Q_J((S_{i_1}(h,y_1),S_{i_2}(h,y_2)),du_1\times du_2)\\
&= Q_J((S_{i_1}(h,y_1),S_{i_2}(h,y_2)),\widetilde{U}(\tilde{a}\rho_Y(S_{i_1}(h,y_1),S_{i_2}(h,y_2)))\geq \eta\quad\text{for any}\quad h\leq t_0.
\end{align*}
Finally, from \ref{cnd:a5} it follows that
\begin{align*}
Q_P((x_1,x_2),U(a\rho_{X,c}(x_1,x_2)))&\geq\sum_{j\in I} (\pi_{i_1,j}\wedge\pi_{i_2,j})\int_0^{t_0} \lambda e^{-\lambda h}\int_{Y^2}\mathbbm{1}_{U(a\rho_{X,c}(x_1,x_2))}((u_1,j),(u_2,j))\\
&\quad\times Q_J((S_{i_1}(h,y_1),S_{i_2}(h,y_2)),du_1\times du_2)\,dh\\
&\geq (\min_{i\in I}\pi_{ij_0})\eta(1-e^{-\lambda t_0})>0,
\end{align*}
with $\eta$ defined by \eqref{e:q2}, which gives \ref{cnd:b3}.

Now, we shall establish condition \ref{cnd:b4}. To do this, fix $(x_1,x_2):=((y_1,i_1),(y_2,i_2))\in F$, and note that, due to \eqref{e:q3}, 
\begin{align}
\begin{split}
\label{e:q_full}
Q_P((x_1,x_2),X^2)&=\sum_{j\in I} (\pi_{i_1,j}\wedge\pi_{i_2,j})\int_0^{\infty} \lambda e^{-\lambda h} Q_J((S_{i_1}(h,y_1),S_{i_2}(h,y_2)),Y^2)\,dh\\
&\geq \sum_{j\in I} (\pi_{i_1,j}\wedge\pi_{i_2,j})-\tilde{l}\lambda\int_0^{\infty} e^{-\lambda h}\rho_Y(S_{i_1}(h,y_1),S_{i_2}(h,y_2))\,dh.
\end{split}
\end{align}
On other hand, referring again to \eqref{e:est_s}, we get
\begin{align}
\begin{split}
\label{e:q_int}
&\int_0^{\infty}e^{-\lambda h} \rho_Y(S_{i_1}(h,y_1),S_{i_2}(h,y_2))\,dh
\\
&\leq  L\left(\int_0^{\infty} e^{-(\lambda-\alpha)h}\,dh\right)\rho_Y(y_1,y_2)+ M_{\mathcal{L}}\left(\int_0^{\infty} e^{-\lambda h}\varphi(h)\,dh\right)\dd(i_1,i_2)\\
&\leq \frac{L}{\lambda-\alpha}\rho_Y(y_1,y_2)+M_{\mathcal{L}}K_{\varphi}\dd(i_1,i_2).
\end{split}
\end{align}
Moreover, we can write
\begin{equation}
\label{e:q_sum}
\sum_{j\in I} \pi_{i_1,j}\wedge \pi_{i_2,j}\geq 1-\,\dd(i_1,i_2).
\end{equation}
Consequently, taking into account \eqref{e:q_full}, \eqref{e:q_int}, \eqref{e:q_sum} and \eqref{e:const_c}, we infer that
\begin{align*}
Q_P((x_1,x_2),X^2)&\geq 1-\frac{\tilde{l}\lambda L}{\lambda-\alpha}\rho_Y(y_1,y_2)-(1+\tilde{l}\lambda M_{\mathcal{L}}K_{\varphi})\dd(i_1,i_2)\\
&\geq 1-\left(\frac{\tilde{l}\lambda L}{\lambda-\alpha}+1+\tilde{l}\lambda M_{\mathcal{L}}K_{\varphi}\right)\rho_{X,c}(x_1,x_2),
\end{align*}
which completes the proof of \ref{cnd:b4}.

What remains is to show that \ref{cnd:b5} holds. To do this, we shall apply \hbox{\cite[Lemma 2.2]{b:kapica}} to the coupling $\{(\Phi_n^{(1)},\Phi_n^{(2)})\}_{n\in\n_0}$ with transition law $\widetilde{P}$, given by \eqref{def:P_tilde}. For~this purpose, it suffices to observe that, letting
$$\widetilde{V}(y_1,y_2):=V(y_1)+V(y_2)=\rho_Y(y_1,y^*)+\rho_Y(y_2,y^*)\quad\text{for}\quad (y_1,y_2)\in X^2,$$
we have
$$\widetilde{P}\widetilde{V}(x_1,x_2)\leq a \widetilde{V}(x_1,x_2)+2b\quad\text{for any}\quad(x_1,x_2)\in X^2,$$
which follows directly from \ref{cnd:b1} and \eqref{e:est_s}. The proof of Lemma \ref{lem:help} is now complete.
\end{proof}
Having established Lemma \ref{lem:help}, we can prove Proposition \ref{lem:coup} by arguing as in the proof of \cite[Lemma 2.1]{b:czapla_clt}. First of all, we need to be able to distinguish the
case where the next step of the chain $\widehat{\Phi}$ is drawn only according to $\bar{Q}_P$ from the case when it is determined only by $\bar{R}_P$. For this aim, we consider $\widehat{Z}:=Z\times\{0,1\}$, which can be viewed as a copy of $Z=X^2\times\mathbb{R}_+$, splitted into two disjoint subsets $Z\times\{0\}$ and $Z\times\{1\}$. Then we define a new stochastic kernel $\ps:\widehat{Z}\times \mathcal{B}(\widehat{Z})\to [0,1]$ by setting
$$\ps((x_1,x_2,s,k),H)=(\bar{Q}_P((x_1,x_2,s),\cdot)\otimes\delta_1^{\star})(H)+(\bar{R}_P((x_1,x_2,s),\cdot)\otimes\delta_0^{\star})(H)$$
for any $x_1,x_2\in X$, $s\in\mathbb{R}_+$, $k\in\{0,1\}$ and $H\in\mathcal{B}(\widehat{Z})$, where $\delta_0^{\star}$ (resp. $\delta_1^{\star}$) stands for the Dirac measure at $0$ (resp. at $1$) on $2^{\{0,1\}}$. Obviously, for any $(x_1,x_2,s,k)\in\widehat{Z}$ and $D\in\mathcal{B}(Z)$, we have
\begin{gather*}
\ps((x_1,x_2,s,k),D\times \{1\})=\bar{Q}_P((x_1,x_2,s),D),\\
\ps((x_1,x_2,s,k),D\times \{0\})=\bar{R}_P((x_1,x_2,s),D),\\ 
\ps((x_1,x_2,s,k),D\times \{0,1\})=\widehat{P}((x_1,x_2,s),D).
\end{gather*}

Further, we introduce the canonical Markov chain $\cs:=\{(\widehat{\Phi}_n',\kappa_n)\}_{n\in\n_0}$ with transition law~$\ps$, wherein $\kappa_n$ takes values in $\{0,1\}$, and $\widehat{\Phi}'=\{(\Phi_n'^{(1)},\Phi_n'^{(2)},\tau_n')\}_{n\in\n_0}$ is an appropriate copy of $\widehat{\Phi}$. We therefore assume that $\cs$ is defined on the space $(\widehat{\Omega}^{\star},\widehat{\mathcal{F}}^{\star}):=(\widehat{Z}^{\mathbb{N}_0},\mathcal{B}({\widehat{Z}}^{\mathbb{N}_0}))$, equipped with an appropriate family $\{\prs_{(x_1,x_2)}:\,x_1,x_2\in X\}$ of probability measures on $\widehat{\mathcal{F}}^{\star}$, such that $\cs$ starts at $((x_1,x_2,0),0)$ almost surely with respect to $\prs_{(x_1,x_2)}$. The symbol $\ers_{(x_1,x_2)}$ will denote the expectation operator corresponding to $\prs_{(x_1,x_2)}$.

Let us now fix arbitrarily $(x_1,x_2)\in X^2$ and $n,M,N\in\n$ such that $n>M>N$. Further, consider the random times
\begin{gather*}
\sigma_K^{(N)}:=\inf\left\{m\geq N:\,\left(\Phi_m^{'(1)},\Phi_m^{'(2)} \right)\in K\right\}, \quad \zeta:=\inf\{m\in\n:\,\kappa_i=1\;\;\text{for any}\;\;i\geq m\},
\end{gather*}
where $K$ is given by \eqref{def:F}, and define
$$H_{N,n}:=\{\kappa_N=\kappa_{N+1}=\ldots=\kappa_n=1\},\quad H_{N,n}^c:=\widehat{\Omega}^{\star}\backslash H_{N,n}.$$

Taking into account that $\prs(H_{N,n}^c)\leq \prs(\zeta>N)$, and that $\bar{\rho}_{X,c}(y_1,y_2)\leq 1$ for any $y_1,y_2\in Y$, we can write the following estimate:
\begin{align*}
\widehat{\ew}_{(x_1,x_2)}\left[\bar{\rho}_{X,c}\left(\Phi_n^{(1)}, \Phi_n^{(2)}\right)\right]&=\ers_{(x_1,x_2)}\left[\bar{\rho}_{X,c}\left(\Phi_n'^{(1)}, \Phi_n'^{(2)} \right) \right]\\
&=\int_{X^2} \bar{\rho}_{X,c}\left(y_1,y_2\right)\prs_{(x_1,x_2)}\left((\Phi_n'^{(1)},\Phi_n'^{(2)})\in dy_1\times dy_2\right)\\
&\leq \int_{X^2} \bar{\rho}_{X,c}\left(y_1,y_2\right)\prs_{(x_1,x_2)}|_{\left\{\sigma_K^{(N)}\leq M\right\}\cap H_{N,n}}\left((\Phi_n'^{(1)},\Phi_n'^{(2)})\in dy_1\times dy_2\right)\\
&\quad+\prs_{(x_1,x_2)}(\sigma_K^{(N)}>M)+\prs_{(x_1,x_2)}(\zeta>N),
\end{align*}
with the convention that $\prs_{(x_1,x_2)}|_H(\cdot):=\prs_{(x_1,x_2)}(H\cap \cdot)$.

Since, according to Lemma \ref{lem:help}, hypotheses \ref{cnd:b1}-\ref{cnd:b5} are fulfilled, we can now apply \hbox{\cite[Lemma 2.2]{b:czapla_clt}} to conclude that there exist constants $C_1,C_2,C_3\geq 0$, $q_1,q_2,q_3\in (0,1)$ and $p\geq 1$ such that
\begin{align*}
\widehat{\ew}_{(x_1,x_2)}\left[\bar{\rho}_{X,c}\left(\Phi_n^{(1)}, \Phi_n^{(2)}\right)\right]\leq (C_1 q_1^{n-M}+C_2 q_2^{M-pN}+C_3q_3^N)(1+V(x_1)+V(x_2)).
\end{align*}

Finally, letting $n\geq \lceil 4p \rceil$ and taking $N=\lfloor n/(4p) \rfloor$, $M=\lfloor n/2 \rfloor$, we obtain
$$\widehat{\ew}_{(x_1,x_2)}\left[\bar{\rho}_{X,c}\left(\Phi_n^{(1)},\Phi_n^{(2)}\right)\right]\leq \widetilde{C}_0(V(x_1)+V(x_2)+1)q^n$$
with $q:=\{q_1^{1/2},q_2^{1/4},q_3^{1/(4p)}\}$ and $\widetilde{C}_0=\max\{q_1^{-1},q_3^{-1}\}(C_1+2C_2+2C_3)$. Obviously, since $\bar{\rho}_{X,c}\leq 1$, this inequality holds, in fact, for all $n\in\n$ with $C_0:=q^{-\lceil 4p \rceil}\max\{\widetilde{C}_0,1\}$ in the place of $\widetilde{C}_0$. The proof of Proposition~\ref{lem:coup} is now complete.

\section*{Acknowledgements}
The work of Hanna Wojew\'odka-\'Sci\k{a}\.zko has been partly supported by the National Science Centre of Poland, grant number 2018/02/X/ST1/01518.

\bibliographystyle{plain}
\bibliography{ReferencesDatabase}

\begin{thebibliography}{10}

\bibitem{b:benaim4}
M.~Bena{\"i}m, T.~Hurth, and E.~Strickler.
\newblock A user-friendly condition for exponential ergodicity in randomly
  switched environments.
\newblock {\em Electronic Communications in Probability}, 23, 2018.

\bibitem{b:benaim2}
M.~Bena{\"i}m, S.~Le~Borgne, F.~Malrieu, and P.-A. Zitt.
\newblock Quantitative ergodicity for some switched dynamical systems.
\newblock {\em Electronic Communications in Probability}, 17(0):1--14, 2012.

\bibitem{b:benaim1}
M.~Bena{\"i}m, S.~Le~Borgne, F.~Malrieu, and P.-A. Zitt.
\newblock Qualitative properties of certain piecewise deterministic
  \text{Markov} processes.
\newblock {\em Ann. Inst. Henri Poincar\'e Probab.}, 51(3):1040--1075, 2015.

\bibitem{b:benaim3}
M.~Bena{\"i}m and E.~Strickler.
\newblock Random switching between vector fields having a common zero.
\newblock {\em The Annals of Applied Probability}, 29(1):326--375, 2019.

\bibitem{b:tyran2}
W.~Biedrzycka and M.~Tyran-Kami\'{n}ska.
\newblock Existence of invariant densities for semiflows with jumps.
\newblock {\em Journal of Mathematical Analysis and Applications},
  435(1):61--84, 2016.

\bibitem{b:bogachev}
V.I. Bogachev.
\newblock {\em Measure Theory}, volume~2.
\newblock Springer-Verlag, Berlin, 2007.

\bibitem{b:boxma}
O.~Boxma, H.~Kaspi, O.~Kella, and D.~Perry.
\newblock On/off storage systems with state-dependent inpout, outpout and
  swithching rates.
\newblock {\em Probability in the Engineering and Informational Sciences},
  19(1):1--14, 2005.

\bibitem{b:buckwar}
E.~Buckwar and M.G. Riedler.
\newblock An exact stochastic hybrid model of excitable membranes including
  spatio-temporal evolution.
\newblock {\em Journal of Mathematical Biology}, 63(6):1051--1093, 2011.

\bibitem{b:des}
B.~Cloez, R.~Dessalles, A.~Genadot, F.~Malrieu, A.~Marguet, and R.~Yvinec.
\newblock Probabilistic and piecewise deterministic models in biology.
\newblock {\em {ESAIM}: Proceedings and Surveys}, 60:225--245, 2017.

\bibitem{b:cloez_hairer}
B.~Cloez and M.~Hairer.
\newblock Exponential ergodicity for \text{Markov} processes with random
  switching.
\newblock {\em Bernoulli}, 21(1):505--536, 2015.

\bibitem{b:costa}
O.L.V. Costa and F.~Dufour.
\newblock Stability of piecewise deterministic \text{Markov} processes.
\newblock {\em SIAM J. Control Optim.}, 37(5):1483--1502, 1999.

\bibitem{b:costa2}
O.L.V. Costa and F.~Dufour.
\newblock Stability and ergodicity of piecewise deterministic \text{Markov}
  processes.
\newblock In {\em 2008 47th {IEEE} Conference on Decision and Control}. IEEE,
  2008.

\bibitem{b:crudu}
A.~Crudu, A.~Debussche, A.~Muller, and O.~Radulescu.
\newblock Existence of invariant densities for semiflows with jumps.
\newblock {\em Ann. Appl. Probab.}, 22(5):1822--1859, 2012.

\bibitem{b:czapla_erg}
D.~Czapla, K.~Horbacz, and H.~Wojew\'odka-\'Sci\k{a}\.zko.
\newblock Ergodic properties of some piecewise-deterministic \text{Markov}
  process with application to gene expression modelling.
\newblock {\em Stoch. Proc. Appl.}, 130(5):2851--2885, 2020.

\bibitem{b:czapla_clt}
D.~Czapla, K.~Horbacz, and H.~Wojew\'odka-\'Sci\k{a}\.zko.
\newblock A useful version of the central limit theorem for a general class of
  \text{Markov} chains.
\newblock {\em Journal of Mathematical Analysis and Applications}, 484(id:
  123725):1--22, 2020.

\bibitem{b:czapla_lil}
D.~Czapla, K.~Horbacz, and H.~Wojew\'odka-\'Sci\k{a}\.zko.
\newblock The \text{Strassen} invariance principle for certain non-stationary
  \text{Markov-Feller} chains.
\newblock {\em Asymptotic Analysis}, 121(1):1--34, 2021.

\bibitem{b:czapla_erg2}
D.~Czapla and J.~Kubieniec.
\newblock Exponential ergodicity of some {M}arkov dynamical systems with
  application to a {P}oisson driven stochastic differential equation.
\newblock {\em Dynamical Systems-An International Journal}, 34(1):130--156,
  2019.

\bibitem{b:davis}
M.H.A. Davis.
\newblock Piecewise-deterministic \text{M}arkov processes: \text{A} general
  class of non-diffusion stochastic models.
\newblock {\em J. Roy. Statist. Soc. Ser. B}, 46(3):353--388, 1984.

\bibitem{b:Davis_book1}
M.H.A. Davis.
\newblock {\em Markov Models and Optimization}.
\newblock Chapman and Hall, 1993.

\bibitem{b:Davis_book2}
M.H.A. Davis.
\newblock {\em \text{P}oint \text{P}rocess \text{T}heory and
  \text{A}pplications: \text{M}arked \text{P}oint and \text{P}iecewise
  \text{D}eterministic \text{P}rocesses}.
\newblock Birkh\"{a}user-Verlag, 2006.

\bibitem{b:douc}
R.~Douc, E.~Moulines, and D.S. Stoffer.
\newblock {\em Nonlinear Time Series: Theory, Methods and Applications with R
  Examples}.
\newblock CRC Press, Boca Raton, 2014.

\bibitem{b:dudley}
R.M. Dudley.
\newblock Convergence of \text{B}aire measures.
\newblock {\em Studia Math.}, 27:251--268, 1966.

\bibitem{b:graham}
C.~Graham and P.~Robert.
\newblock Interacting multi-class transmissions in large stochastic networks.
\newblock {\em The Annals of Applied Probability}, 19(6):2334--2361, 2009.

\bibitem{b:hairer}
M.~Hairer.
\newblock Exponential mixing properties of stochastic \text{PDEs} through
  asymptotic coupling.
\newblock {\em Probab. Theory Related Fields}, 124(3):345--380, 2002.

\bibitem{b:kappel}
K.~Ito and F.~Kappel.
\newblock {\em Evolution equations and approximations}.
\newblock Ser. Adv. Math. Appl. Sci. 61, World Scientific, New Jersey, 2002.

\bibitem{b:kapica}
R.~Kapica and M.~\'{S}l\k{e}czka.
\newblock Random iteration with place dependent probabilities.
\newblock {\em Probability And Mathematical Statistics}, 40(1):119--137, 2020.

\bibitem{b:las}
A.~Lasota and J.~Yorke.
\newblock Lower bound technique for \text{M}arkov operators and iterated
  function systems.
\newblock {\em Random Comput. Dynam.}, 2:41--77, 1994.

\bibitem{b:tyran}
M.C. Mackey, M.~Tyran-Kami\'{n}ska, and R.~Yvinec.
\newblock Dynamic behavior of stochastic gene expression models in the presence
  of bursting.
\newblock {\em SIAM J. Appl. Math.}, 73(5):1830--1852, 2013.

\bibitem{b:meyn2}
S.P. Mayn and R.~Tweedie.
\newblock Criteria for stability of \text{M}arkovian processes \text{III}:
  \text{F}oster--\text{L}yapunov criteria for continuous time processes, with
  examples.
\newblock {\em Adv. in Appl. Probab.}, 25(3):518--548, 1993.

\bibitem{b:meyn1}
S.P. Mayn and R.~Tweedie.
\newblock {\em Markov Chains and Stochastic Stability}.
\newblock Springer-Verlag, Berlin, 2993.

\bibitem{b:revuz}
D.~Revuz.
\newblock {\em Markov chains}.
\newblock North-Holland Elsevier, Amsterdam, 1975.

\bibitem{b:rudnicki}
R.~Rudnicki and M.~Tyran-Kami\'nska.
\newblock {\em Piecewise deterministic processes in biological models}.
\newblock Springer Briefs in Applied Sciences and Technology, Cham, 2017.

\bibitem{b:sleczka}
M.~{\'{S}}l{\k{e}}czka.
\newblock The rate of convergence for iterated function systems.
\newblock {\em Studia Mathematica}, 205(3):201--214, 2011.

\bibitem{b:szarek}
T.~Szarek.
\newblock Invariant measures for \text{M}arkov operators with application to
  function systems.
\newblock {\em Studia \text{M}athematica}, 154(3):207--222, 2003.

\bibitem{b:wojewodka}
H.~Wojew{\'{o}}dka.
\newblock Exponential rate of convergence for some \text{M}arkov operators.
\newblock {\em Statistics {\&} Probability Letters}, 83(10):2337--2347, oct
  2013.

\end{thebibliography}
\end{document}